\theoremstyle{plain}
\newtheorem{thm}{Theorem}[section]
\newtheorem{prop}[thm]{Proposition}
\newtheorem{lem}[thm]{Lemma}
\newtheorem{cor}[thm]{Corollary}
\theoremstyle{remark}
\newtheorem*{rema}{Remark}
\numberwithin{equation}{section}
\begin{document}

	\title[Solutions of Navier--Stokes--Maxwell systems in large energy spaces]{Solutions of Navier--Stokes--Maxwell systems \\ in large energy spaces}

	
	\author{Diogo Ars\'enio}
	\address
	{
	New York University Abu Dhabi \\
	Abu Dhabi \\
	United Arab Emirates
	}
	\email{diogo.arsenio@nyu.edu}


	\author{Isabelle Gallagher}
	\address
	{DMA, \'Ecole normale sup\'erieure, CNRS, PSL Research University, 75005 Paris 
	\\
	and UFR de math\'ematiques, Universit\'e Paris-Diderot, Sorbonne Paris-Cit\'e, 75013 Paris, France.}	\email{gallagher@math.ens.fr}

	\keywords{Navier--Stokes equations, Maxwell's equations, plasmas, existence of weak solutions, energy space}
	
	\date{\today}
	
	\begin{abstract}
		Large weak solutions to Navier--Stokes--Maxwell systems are not known to exist in their corresponding energy space in full generality. Here, we mainly focus on the three-dimensional setting of a classical incompressible Navier--Stokes--Maxwell system and --- in an effort to build solutions in the largest possible functional spaces --- prove that global solutions exist under the assumption that the initial velocity and electromagnetic fields have finite energy, and that the initial electromagnetic field is small in $\dot H^s\left({\mathbb R}^3\right)$ with $s\in \left[\frac 12,\frac 32\right)$. We also apply our method to improve known results in two dimensions by providing uniform estimates as the speed of light tends to infinity.

		The method of proof relies on refined energy estimates and a Gr\"onwall-like argument, along with a new maximal estimate on the heat flow in Besov spaces. The latter parabolic estimate allows us to bypass the use of the so-called Chemin--Lerner spaces altogether, which is crucial and could be of independent interest.
	\end{abstract}

	\maketitle


\section{Introduction and main results}

We study the incompressible Navier--Stokes--Maxwell system with Ohm's law in two and three space-dimensions:
\begin{equation}\label{NSM}
	\left\{
	\begin{aligned}
		&\partial_t u +
		u\cdot\nabla  u - \mu\Delta  u
		  = -\nabla p
		+ j\times B \,  ,  & \operatorname{div} u   = 0 \, ,\\
		&\frac 1c\partial_t E - \nabla\times B = - j\,  , & j
		 = \sigma\left( cE + u \times B \right)\,  ,
		\\
		&\frac 1c\partial_t B + \nabla\times E   = 0\,  , & \operatorname{div} B  = 0\,  ,
	\end{aligned}
	\right.
\end{equation}
where $c>0$ denotes the speed of light, $\mu>0$ is the viscosity of the fluid and~$\sigma>0$ is the electrical conductivity.
In the above system, $t\in\mathbb{R^+}$ and~$x\in\mathbb{R}^d$ (where~$d=2$ or~$3$) are the time and space variables, $u=(u_1,u_2,u_3)=u(t,x)$ stands for the velocity field of the (incompressible) fluid while~$E=(E_1,E_2,E_3)=E(t,x)$ and~$B=(B_1,B_2,B_3)=B(t,x)$ are the electric and magnetic fields respectively. All are three-component vector fields. However, when~$d=2$, it is assumed that $u_3=E_3=B_1=B_2=0$. Finally, the scalar function~$p=p(t,x)$ is the pressure and is also an unknown. Observe, though, that the electric current $j=j(t,x)$ is not an unkown, for it is fully determined by $(u,E,B)$ through Ohm's law.

The Navier--Stokes--Maxwell system~\eqref{NSM} describes the evolution of a plasma (i.e.\ a charged fluid) subject to a self-induced electromagnetic Lorentz force $j\times B$. It is by no means the only available description of such a viscous incompressible plasma. Indeed, other similar models coupling the Navier--Stokes equations with Maxwell's equations through different Ohm's laws include
\begin{equation*}
	\left\{
	\begin{aligned}
		&\partial_t u +
		u\cdot\nabla  u - \mu\Delta  u
		  = -\nabla p
		+ j\times B \,  ,  & \operatorname{div} u   = 0 \, ,
		\\
		& \frac 1c\partial_t E - \nabla\times B = - j\,  ,
		& \operatorname{div}B = 0 \, ,
		\\
		&\frac 1c\partial_t B + \nabla\times E   = 0\,  , & \operatorname{div} E  = 0\,  ,
		\\
		& j = \sigma\left( -\nabla\bar p + cE + u \times B \right)\,  ,
		&\operatorname{div} j = 0 \, ,
	\end{aligned}
	\right.
\end{equation*}
where the electromagnetic pressure $\bar p=\bar p(t,x)$ is also unkown, and
\begin{equation*}
	\left\{
	\begin{aligned}
		& \partial_t u + u\cdot\nabla u - \mu\Delta u
		= -\nabla p + cnE + j\times B \, ,
		& \operatorname{div} u = 0 \, ,
		\\
		&\frac 1c\partial_t E - \nabla\times B = - j\, ,
		& \operatorname{div} B  = 0\, ,
		\\
		&\frac 1c\partial_t B + \nabla\times E   = 0\, ,
		& \operatorname{div}E = n \, ,
		\\
		& j - nu = \sigma\left( -c\nabla n + cE + u \times B \right)\, ,
		&
	\end{aligned}
	\right.
\end{equation*}
where the electric charge density $n=n(t,x)$ is not unknown, for it is determined by Gauss's law $\operatorname{div}E=n$.

The appropriateness of each system depends on the specific physical regime under consideration. However, we believe that the Navier--Stokes--Maxwell system~\eqref{NSM} captures most of the essential mathematical difficulties pertaining to the non-linear coupling of the incompressible Navier--Stokes equations with Maxwell's system, which is hyperbolic. From now on, we are therefore going to focus exclusively on~\eqref{NSM}. Nevertheless, we expect that most results concerning~\eqref{NSM} can be extended, in some form, to the other Navier--Stokes--Maxwell systems.

We refer to~\cite{arseniosaintraymond} for systematic derivations of the above systems from kinetic Vlasov--Maxwell--Boltzmann systems, and to~\cite{Biskamp93, Davidson01} for more details on the physics underlying the behavior of plasmas.

\medskip

Before discussing the contents of this paper let us recall some well-established facts regarding the Cauchy problem for the Navier--Stokes equations (corresponding to the case when~$(E,B) \equiv 0$ in~\eqref{NSM}), in relation with this work.  Formally it is easy to see, by multiplying the Navier--Stokes equations by~$u$ and integrating in space, that
\begin{equation*}
	\frac 12\frac{d}{dt} \left\|u(t)\right\|_{L^2}^2 
	+ \mu\left\|\nabla  u(t)\right\|_{L^2}^2  = 0 \, .
\end{equation*}
Using this property, J.\ Leray was able to prove in~\cite{leray} the global existence of bounded energy solutions to the Navier--Stokes equations in
\begin{equation}\label{energyspace}
	L^\infty ({\mathbb R}^+;L^2) \cap L^2 ({\mathbb R}^+;\dot H^1 ) \, ,
\end{equation}
as soon as the initial data~$u_0$ lies in~$L^2$, such that the following energy inequality is satisfied, for   every $t>0$:
\begin{equation*}
	\frac 12  \left\|u(t)\right\|_{L^2}^2
	+ \mu\int_0^t \left \|\nabla  u(\tau)\right\|_{L^2}^2 \, d\tau
	\leq\frac 12 \left\|u_0\right\|_{L^2}^2 \, .
\end{equation*}
The method of proof relies on solving an approximate system (obtained for instance by a frequency cutoff), in proving global in time a priori estimates on the sequence of approximate solutions thanks to the energy bound, and in taking limits in the approximation parameter. Thanks to the smoothing effect provided by the viscosity, the sequence of approximate solutions converges then to a weak solution of the Navier--Stokes equations. There is, however, a possible defect of compactness in the limiting process which leads to the energy being in the end decreasing while it is conserved for the approximate system.

The uniqueness of bounded-energy solutions is, to this day, only known to hold in two space-dimensions, and is also due to J.\ Leray~\cite{leray2D}.

Uniqueness of solutions in general space-dimensions is known for solutions belonging to some {\it scale-invariant spaces}, namely spaces invariant under the transformation
\begin{equation*}
	u(t,x) \mapsto \lambda u (\lambda^2 t, \lambda x) \, ,\quad \lambda>0 \, ,
\end{equation*}
such as~$L^\infty(\mathbb{R}^+;L^d)$ (see~\cite{furiolilemarieterraneo},~\cite{masmoudi2},~\cite{monniaux}).

In two space dimensions, this implies that the energy spaces appearing in~\eqref{energyspace} are scale-invariant. But this property unfortunately fails in higher dimensions, thus rendering the Navier--Stokes equations supercritical whenever $d\geq 3$.

We shall not recall here the extensive literature on the subject, and we only further refer the interested reader to~\cite{bahouri}, \cite{lemarie} or~\cite{lemarie2}, for instance.

\medskip

Let us return now to the full Navier--Stokes--Maxwell equations~\eqref{NSM}. The associate formal energy conservation law is
\begin{equation*}
	\frac 12\frac{d}{dt}\big(\left\|u\right\|_{L^2}^2
	+ \left\|E\right\|_{L^2}^2 + \left\|B\right\|_{L^2}^2 \big)
	+ \mu\left\|\nabla  u\right\|_{L^2}^2 + \frac 1\sigma \left\|j\right\|_{L^2}^2 = 0 \, .
\end{equation*}
It is therefore natural to expect the existence of weak solutions to~\eqref{NSM} such that
\begin{equation}\label{energyspace2}
	u \in L^\infty ({\mathbb R}^+;L^2) \cap L^2 ({\mathbb R}^+;\dot H^1 )\, ,
	\quad 
	\left(E,B\right) \in L^\infty ({\mathbb R}^+;L^2) \,,
	\quad
	j \in L^2 ({\mathbb R}^+;L^2) \, ,
\end{equation}
satisfying the energy inequality, for almost all $t > 0$,
\begin{equation}\label{energy}
	\begin{aligned}
		\frac 12 \big(\left\|u(t)\right\|_{L^2}^2
		+ \left\|E(t)\right\|_{L^2}^2 + \left\|B(t)\right\|_{L^2}^2 \big)
		& + \int_0^t \big( \mu\left\|\nabla u(\tau)\right\|_{L^2}^2 + \frac 1\sigma \left\|j(\tau)\right\|_{L^2}^2 \big) \, d\tau
		\\
		& \leq
		\frac 12\big(\left\|u_0\right\|_{L^2}^2
		+ \left\|E_0\right\|_{L^2}^2 + \left\|B_0\right\|_{L^2}^2 \big) \, ,
	\end{aligned}
\end{equation}
where $\left(u_0,E_0,B_0\right)\in L^2$ is the initial data. For convenience of notation, we henceforth denote the initial energy by
\begin{equation*}
	\mathcal{E}_0:=\frac 12\big(\left\|u_0\right\|_{L^2}^2
	+ \left\|E_0\right\|_{L^2}^2 + \left\|B_0\right\|_{L^2}^2 \big).
\end{equation*}

\medskip

Compared to the Navier--Stokes equations mentioned above, solving~\eqref{NSM} in the energy space seems very difficult as there is not enough compactness in the magnetic field~$B$ to take limits, after an approximation procedure, in the non-linear term~$j \times B$. Furthermore, as discussed in~\cite[Section~2]{arsenio2}, the classical theory of compensated compactness also fails to provide the weak stability of the product~$E\times B$, thus leaving little hope to establish the weak stability of~\eqref{NSM} in its corresponding energy space with classical methods.

A number of studies have recently addressed this lack of compactness in~\eqref{NSM}. In~\cite{Masmoudi10jmpa}, the equations are successfully solved globally in two space-dimensions, for any (possibly large) initial data
\begin{equation*}
	(u_0,E_0,B_0) \in L^2\times H^s\times H^s \, , \quad \text{with }s>0\, .
\end{equation*}
This result is quite satisfying since it covers a very large class of initial data. It remains unknown, though, whether initial electric and magnetic fields in $L^2\setminus \cup_{s>0}H^s$ give rise to a global solution in general.

The existence of solutions in two dimensions is extended in~\cite{germain} to any sufficiently small initial data in
\begin{equation*}
	L^2\times L^2_\mathrm{log}\times L^2_\mathrm{log} \, ,
\end{equation*}
where the space $L^2_\mathrm{log}$ resembles an $H^s$-space with a logarithmic weight on high frequencies instead of an algebraic weight, so that $\cup_{s>0}H^s\subset L^2_\mathrm{log}\subset L^2$. We refer to~\cite{germain} for a precise definition of such spaces. It is to be emphasized that these solutions fail to be global unless the initial data is sufficiently small.

Note that a slightly weaker two-dimensional result had been previously obtained in~\cite{ibrahim} for small initial data in
\begin{equation*}
	\dot B^0_{2,1}\times L^2_\mathrm{log}\times L^2_\mathrm{log} \, .
\end{equation*}
The definition of Besov spaces is recalled in our appendix.

In three space-dimensions, a global unique solution for sufficiently small initial data in
\begin{equation*}
	\dot B^\frac12_{2,1}\times \dot H^\frac 12 \times \dot H^\frac12 \, ,
\end{equation*}
is constructed in~\cite{ibrahim}. This result is also extended in~\cite{germain} to small initial data in
\begin{equation*}
	\dot H^\frac12 \times \dot H^\frac 12 \times \dot H^\frac12 \, .
\end{equation*}

In this work, we aim at extending the preceding three-dimensional results for small initial data to larger functional settings, ultimately reaching subsets of~$L^2\times\ L^2\times L^2$ which are as large as possible and eliminating some restrictions on the size of the initial data. Thus, our first result (see Theorem~\ref{mainthm} below) asserts the existence of weak solutions to~\eqref{NSM} in three dimensions provided the initial data has finite energy~$\mathcal{E}_0<\infty$ and the initial electromagnetic field~$(E_0,B_0)$ alone is small in~$\dot H^\frac 12 \times \dot H^\frac 12$. Note that there is no hope of attaining uniqueness of solutions in this setting since, by choosing~$(E_0,B_0)=0$, it would imply the general uniqueness of solutions to the three-dimensional Navier--Stokes equations.

As a byproduct of our three-dimensional methods, we are also able to revisit (see Theorem~\ref{mainthm2d} below) the two-dimensional existence result from~\cite{Masmoudi10jmpa} by refining its estimates so that they remain uniform in the asymptotic regime~$c\to\infty$. This further allows us to derive the two-dimensional magneto-hydrodynamic system with full rigor in Corollary~\ref{maincor}. Note that the asymptotics as~$c \to \infty$ of global finite energy solutions, provided they exist, has been previously studied in~\cite{arsenio2} in two and three space-dimensions.

\subsection{The three-dimensional result}

We first establish that global existence of solutions to the three-dimensional system~\eqref{NSM} holds whenever the initial datum~$(u_0,E_0,B_0)$ is chosen in the natural energy space~$L^2$, while the electromagnetic field~$(E_0,B_0)$ alone lies in~$\dot H^s$, for some given~$s\in \left[\frac 12,\frac 32\right)$, and is sufficiently small when compared to some non-linear function of the initial energy~$\mathcal{E}_0$. The precise formulation of this result is contained in the following theorem.

\begin{thm}\label{mainthm}
	Let~$s$ be any real number in~$\left[\frac 12,\frac 32\right)$. There is a constant~$C_*>0$ such that, if the initial data~$\left(u_0,E_0,B_0\right)$, with~$\operatorname{div}u_0=\operatorname{div}B_0=0$, belongs to~$\left( L^2 \times (H^s)^2\right) ({\mathbb R}^3)$
	with
	\begin{equation}\label{smalldataHs}
		\|(E_0,B_0)\|_{\dot H^s}
		C_* \mathcal{E}_0^{s-\frac 12}
		e^{C_* \mathcal{E}_0}
		\leq 1\, ,
	\end{equation}
	then there is a global weak solution~$\left(u ,E ,B \right)$ to the three-dimensional Navier--Stokes--Maxwell system~\eqref{NSM} satisfying the energy inequality~\eqref{energy} and enjoying the additional regularity
	\begin{equation}\label{estimateHsEB1}
		\begin{aligned}
			E,B & \in L^\infty(\mathbb{R}^+;\dot H^s)
			\\
			E & \in L^2(\mathbb{R}^+;\dot H^s)
			\\
			u & \in L^1(\mathbb{R}^+;\dot B^\frac 32_{2,1})+L^2(\mathbb{R}^+;\dot B^\frac 32_{2,1})\,.
		\end{aligned}
	\end{equation}
\end{thm}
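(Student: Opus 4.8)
The strategy is to run the classical Leray approximation scheme — regularizing the nonlinearities (say by truncating high frequencies) — so that global approximate solutions exist and satisfy both the energy identity~\eqref{energy} and the additional $\dot H^s$ bounds, and then pass to the limit. Since the energy estimate~\eqref{energy} already controls $u$ in $L^\infty(L^2)\cap L^2(\dot H^1)$ and $(E,B)$ in $L^\infty(L^2)$ and $j$ in $L^2(L^2)$, the whole difficulty is to close, uniformly in the approximation parameter, an a~priori estimate for $(E,B)$ in $L^\infty(\dot H^s)$ together with $E\in L^2(\dot H^s)$ and the claimed parabolic regularity of $u$. Once such uniform bounds are available, the compactness needed to pass to the limit in $j\times B$ and $u\times B$ is supplied by the extra regularity of $(E,B)$ combined with the $\dot H^1$-bound on $u$ and Aubin--Lions-type arguments, exactly as in the two-dimensional treatments.

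\textbf{The refined energy estimate.} First I would derive the key differential inequality. Applying $\Lambda^s=(-\Delta)^{s/2}$ to Maxwell's equations and pairing with $(\Lambda^s E,\Lambda^s B)$, the antisymmetric curl terms cancel and one is left, after using Ohm's law $cE=j/\sigma-u\times B$, with
\begin{equation*}
	\frac12\frac{d}{dt}\big(\|E\|_{\dot H^s}^2+\|B\|_{\dot H^s}^2\big)
	+\frac{c}{\sigma}\|E\|_{\dot H^s}^2
	\lesssim \big|\langle \Lambda^s(u\times B),\Lambda^s E\rangle\big|
	+\big|\langle\Lambda^s j,\Lambda^s B\rangle\big|\,,
\end{equation*}
and similarly a bound on $\|j\|_{\dot H^s}$ in terms of $\|E\|_{\dot H^s}$ and $\|u\times B\|_{\dot H^s}$. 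The quadratic terms $\Lambda^s(u\times B)$ are then estimated by product laws in Sobolev and Besov spaces: schematically $\|u\times B\|_{\dot H^s}\lesssim \|u\|_{L^\infty}\|B\|_{\dot H^s}+\|B\|_{L^\infty}\|u\|_{\dot H^s}$, with the low-regularity factors of $u$ absorbed using the energy norm and the $\dot B^{3/2}_{2,1}$ information, which embeds into $L^\infty$. The restriction $s<\frac32$ is exactly what is needed for these product estimates to close while keeping the $u$-factor at or below the energy level, and $s\ge\frac12$ guarantees that $\dot H^s\cap L^2\hookrightarrow L^3$ so the $j\times B$ forcing in the $u$-equation is controlled.

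\textbf{The parabolic step and the Grönwall argument.} To control $u$ in $L^1(\dot B^{3/2}_{2,1})+L^2(\dot B^{3/2}_{2,1})$ I would write Duhamel's formula for $u$, split the forcing $-\nabla p+j\times B$ (after Leray projection) into the Navier--Stokes self-interaction $\mathbb P(u\cdot\nabla u)$ and the Lorentz term $\mathbb P(j\times B)$, and apply the new maximal heat-flow estimate in Besov spaces advertised in the abstract: this is precisely what replaces the Chemin--Lerner spaces and allows one to bound $\|e^{t\mu\Delta}f\|_{L^q(\dot B^{\sigma+2/q}_{2,1})}$ by $\|f\|_{\dot B^\sigma_{2,1}}$ without the usual loss. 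The Lorentz forcing $j\times B$ lies in $L^2(\dot H^{s-?})$-type spaces by the product law and the already-established bounds on $j$ and $B$, feeding the $L^2(\dot B^{3/2}_{2,1})$ piece of $u$; the quadratic Navier--Stokes term contributes the $L^1(\dot B^{3/2}_{2,1})$ piece once one knows $u\in L^2(\dot H^1)$ plus a small amount of extra integrability coming from the electromagnetic smallness. Assembling all of this gives a closed system of inequalities for the quantity $X(t)=\|(E,B)(t)\|_{\dot H^s}^2$ and the time-integrated norms, of the form $X'(t)+ c\,\|E\|_{\dot H^s}^2 \lesssim a(t)X(t)$ with $\int_0^\infty a(t)\,dt\lesssim \mathcal E_0$ (here the energy dissipation $\int\|\nabla u\|_{L^2}^2\lesssim\mathcal E_0$ enters decisively). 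Grönwall's lemma then yields $\sup_t X(t)\lesssim \|(E_0,B_0)\|_{\dot H^s}^2 e^{C\mathcal E_0}$, which is finite and propagated globally provided the initial electromagnetic field satisfies the smallness condition~\eqref{smalldataHs}; the factor $\mathcal E_0^{s-1/2}$ accounts for the interpolation losses in the product estimates.

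\textbf{Main obstacle.} The delicate point is not the structure of the argument but making the nonlinear estimates genuinely \emph{closed and uniform}: the term $\Lambda^s(u\times B)$ must be controlled using only norms of $u$ that are bounded by the energy (plus the self-improving parabolic norm of $u$), never by $\|u\|_{\dot H^s}$, since no such bound is available in three dimensions. This forces a careful paraproduct decomposition in which every high-frequency factor falls on $B$ (which we control in $\dot H^s$) and every $u$-factor is placed in $L^2$, $\dot H^1$, or $\dot B^{3/2}_{2,1}$; verifying that this is simultaneously possible for all $s\in[\tfrac12,\tfrac32)$, and that the resulting time-weights are integrable so that Grönwall applies with the stated exponential constant, is the technical heart of the proof. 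The new maximal Besov heat estimate is what makes the $u$-side of this bookkeeping work without Chemin--Lerner spaces.
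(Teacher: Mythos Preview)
Your overall architecture---Leray approximation, $\dot H^s$ energy estimate on Maxwell with damping on $E$, parabolic smoothing to place $u$ in $\dot B^{3/2}_{2,1}$, product law $\|u\times B\|_{\dot H^s}\lesssim\|u\|_{\dot B^{3/2}_{2,1}}\|B\|_{\dot H^s}$---matches the paper. But the closure you describe does not work as stated. You write the final inequality as a \emph{linear} Gr\"onwall, $X'\lesssim a(t)X$ with $\int a\lesssim\mathcal{E}_0$; if that held, no smallness on $(E_0,B_0)$ would be needed at all. The point is that $a(t)$ contains $\|u\|_{\dot B^{3/2}_{2,1}}^2$, and the electromagnetic piece $u_e$ of $u$ (the Duhamel contribution of $j\times B$) satisfies $\|u_e\|_{L^2\dot B^{3/2}_{2,1}}^2\lesssim \int\|j\|_{L^2}^2\|B\|_{\dot H^s}^2$, which feeds $X$ back into the exponent. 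The resulting inequality is of the form $X(t)\le X_0\exp\bigl(C\mathcal{E}_0+C\!\int\|j\|_{L^2}^2 X\bigr)$, and it is precisely this nonlinearity that forces the smallness condition~\eqref{smalldataHs}: the paper resolves it via an explicit nonlinear Gr\"onwall lemma (for $s=1$) giving $X(t)\le X_0 e^{C\mathcal{E}_0}/(1-CX_0 e^{C\mathcal{E}_0}\!\int\|j\|^2)$.

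There is a second gap at the endpoint $s=\tfrac12$. There the estimate $\|j\times B\|_{\dot B^{-1/2}_{2,1}}\lesssim\|j\|_{L^2}\|B\|_{\dot H^1}$ is no longer available (you only control $B$ in $\dot H^{1/2}$), and one must instead expand $j=\sigma(cE+u\times B)$ and control $\|j\times B\|_{\dot B^{-1/2}_{2,1}}$ by $\|cE\|_{\dot H^{1/2}}\|B\|_{\dot H^{1/2}}+\|u\|_{\dot B^{3/2}_{2,1}}\|B\|_{\dot H^{1/2}}^2$. This reintroduces $\|u_e\|$ on the right-hand side of the very estimate meant to bound $u_e$, so the argument is not a Gr\"onwall at all but a bootstrap/absorption on the nonlinear functional $\langle u_e\rangle_{\dot B^{3/2}_{2,1}}$ mixing $L^1$ and $L^2$ in time; this is where the damping term $\|cE\|_{L^2\dot H^{1/2}}$ is genuinely spent. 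Finally, the factor $\mathcal{E}_0^{s-1/2}$ in~\eqref{smalldataHs} does not come from ``interpolation losses in the product estimates'' but from interpolating the \emph{initial data} $\|(E_0,B_0)\|_{\dot H^{1/2}}\le\|(E_0,B_0)\|_{\dot H^s}^{1/(2s)}\mathcal{E}_0^{(2s-1)/(4s)}$ to reduce the general case to $s=\tfrac12$, after which Proposition~\ref{estimatewave} propagates the higher $\dot H^s$ regularity for free. (Two minor slips: your Maxwell identity should have damping $\sigma c^2\|E\|_{\dot H^s}^2$ and no $\langle\Lambda^s j,\Lambda^s B\rangle$ term; and the product rule you want is $\|u\times B\|_{\dot H^s}\lesssim\|u\|_{L^\infty\cap\dot B^{3/2}_{2,\infty}}\|B\|_{\dot H^s}$, which avoids $\|u\|_{\dot H^s}$ entirely.)
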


A preliminary strategy of proof of Theorem~\ref{mainthm} is presented in Section~\ref{strategy}. The actual proof of the theorem is then contained in Sections~\ref{proof1}, \ref{proof2} and \ref{proof3}.

\begin{rema}
	A careful reading of the proof of Theorem~\ref{mainthm} shows that the constant~$C_*>0$ can be chosen independently of the speed of light~$c$ provided~\eqref{smalldataHs} is replaced by
	\begin{equation*}
		\|(E_0,B_0)\|_{\dot H^s}
		C_*\mathcal{E}_0^{s-\frac 12}
		\exp\left(
		C_* \left(c\left(\mathcal{E}_0^\frac 12+\mathcal{E}_0\right)+\mathcal{E}_0\right)
		\right)
		\leq 1\, .
	\end{equation*}
\end{rema}

\subsection{The two-dimensional result}

Our main result in two dimensions comes as a byproduct of the methods developed for the proof of Theorem~\ref{mainthm}. It establishes the existence of weak solutions to~\eqref{NSM} without any restriction on the size of the initial data and is a refinement of the global well-posedness result established in~\cite{Masmoudi10jmpa}.

\begin{thm}\label{mainthm2d}
	Let~$s$ be any real number in~$\left(0,1\right)$ and consider any initial data
	\begin{equation}\label{smalldataHs2d}
		\left(u_0,E_0,B_0\right)\in\left( L^2 \times (H^s)^2\right) ({\mathbb R}^2)\, ,
	\end{equation}
	such that~$\operatorname{div}u_0=\operatorname{div}B_0=0$. Then there is a global weak solution~$\left(u ,E ,B \right)$ to the two-dimensional Navier--Stokes--Maxwell system~\eqref{NSM} satisfying the energy inequality~\eqref{energy} and enjoying the regularity
	\begin{equation}\label{estimateHsEB12d}
		\begin{aligned}
			E,B & \in L_{\mathrm{loc}}^\infty(\mathbb{R}^+;\dot H^s)
			\\
			u & \in L_{\mathrm{loc}}^2(\mathbb{R}^+;L^\infty)\,.
		\end{aligned}
	\end{equation}
	In particular, there exists a constant~$C_*>0$ (which is independent of the speed of light~$c$), such that
	\begin{equation}\label{globalindep}
		\begin{aligned}
			\mathcal{E}_0 & \left(\|E(t)\|_{\dot H^s}^2+\|B(t)\|_{\dot H^s}^2\right)
			\\
			& \leq
			\left(e+\mathcal{E}_0\left(\|E_0\|_{\dot H^s}^2+\|B_0\|_{\dot H^s}^2\right)+\frac{t}{1+\mathcal{E}_0+\mathcal{E}_0^2}\right)^{C_* 2^{C_*(\mathcal{E}_0+\mathcal{E}_0^2)}}
			\, ,
		\end{aligned}
	\end{equation}
	for every~$t>0$.
\end{thm}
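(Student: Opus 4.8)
The plan is to run an energy-type argument for the electromagnetic field in $\dot H^s$, using the Navier--Stokes energy dissipation to control the coupling terms, and then to close the resulting differential inequality by a Grönwall-type device that produces the explicit bound \eqref{globalindep}. First I would set up the approximation scheme exactly as in Leray's construction (frequency truncation), so that all computations below are legitimate a~priori estimates on smooth approximate solutions, uniform in the approximation parameter and in $c$; weak compactness in the fluid unknown and the $\dot H^s$-bound on $(E,B)$ will then allow passage to the limit in $j\times B$, since $j$ is bounded in $L^2_tL^2_x$ by the energy inequality \eqref{energy} and $B$ is bounded in $L^\infty_t\dot H^s$ with $s>0$, which upgrades to strong local compactness of $B$ via the Maxwell equations (the dissipative term $-j=-\sigma(cE+u\times B)$ in Ampère's law together with a bound on $\partial_t B$ in a negative Sobolev space). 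The heart of the matter, though, is the a~priori $\dot H^s$ estimate, so I focus on that.

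Applying $\dot\Delta_q$ (or differentiating $|D|^s$) to Faraday's and Ampère's laws, taking the $L^2$ inner product with $|D|^{2s}B$ and $|D|^{2s}E$ respectively, and adding, the terms $\frac1c\langle\partial_t E,\cdot\rangle$, $\frac1c\langle\partial_t B,\cdot\rangle$ combine into $\frac1{2c}\frac{d}{dt}(\|E\|_{\dot H^s}^2+\|B\|_{\dot H^s}^2)$, the curl terms $\langle-\nabla\times B,|D|^{2s}E\rangle+\langle\nabla\times E,|D|^{2s}B\rangle$ cancel by integration by parts, and Ohm's law $j=\sigma(cE+u\times B)$ turns the right-hand side into $-\frac1{c}\langle j,|D|^{2s}E\rangle=-\sigma\|E\|_{\dot H^s}^2-\frac{\sigma}{c}\langle u\times B,|D|^{2s}E\rangle$. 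Multiplying through by $c$, the key differential inequality reads, schematically,
\begin{equation*}
	\frac12\frac{d}{dt}\left(\|E\|_{\dot H^s}^2+\|B\|_{\dot H^s}^2\right)+\sigma c\|E\|_{\dot H^s}^2\leq\sigma\left|\left\langle u\times B,|D|^{2s}E\right\rangle\right|\,,
\end{equation*}
and the whole game is to absorb the quadratic-in-$B$, linear-in-$E$ right-hand side. I would estimate the commutator/product $\|u\times B\|_{\dot H^s}$ by a paraproduct decomposition: the dangerous piece is $T_u B$-type and $R(u,B)$-type terms, controlled in dimension two by $\|u\|_{L^\infty}\|B\|_{\dot H^s}$ plus lower-order contributions (here the two-dimensional structure $u_3=E_3=B_1=B_2=0$ and $\operatorname{div}u=0$ is used to avoid the worst derivative landing on $B$). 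This yields a bound of the form $\sigma\|u\|_{L^\infty}\|B\|_{\dot H^s}\|E\|_{\dot H^s}$ on the right, and after a Young inequality $\sigma\|u\|_{L^\infty}\|B\|_{\dot H^s}\|E\|_{\dot H^s}\leq\frac{\sigma c}{2}\|E\|_{\dot H^s}^2+\frac{\sigma}{2c}\|u\|_{L^\infty}^2\|B\|_{\dot H^s}^2$ one obtains
\begin{equation*}
	\frac{d}{dt}\left(\|E\|_{\dot H^s}^2+\|B\|_{\dot H^s}^2\right)\leq\frac{\sigma}{c}\|u\|_{L^\infty}^2\left(\|E\|_{\dot H^s}^2+\|B\|_{\dot H^s}^2\right)\,,
\end{equation*}
so that Grönwall gives $\|E(t)\|_{\dot H^s}^2+\|B(t)\|_{\dot H^s}^2\leq\bigl(\|E_0\|_{\dot H^s}^2+\|B_0\|_{\dot H^s}^2\bigr)\exp\bigl(\frac{\sigma}{c}\int_0^t\|u\|_{L^\infty}^2\,d\tau\bigr)$. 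It therefore remains to control $\int_0^t\|u(\tau)\|_{L^\infty}^2\,d\tau$ in terms of the data. This is where the new maximal heat-flow estimate in Besov spaces advertised in the abstract enters: writing Duhamel's formula for $u$ with forcing $\mathbb{P}(j\times B - u\cdot\nabla u)$, one estimates $u$ in $L^2_t\dot B^{1}_{2,1}\hookrightarrow L^2_tL^\infty$ (in $d=2$, $\dot B^1_{2,1}\hookrightarrow L^\infty$) by the maximal regularity of the heat semigroup, bounding the forcing in $L^1_t\dot B^{-1}_{2,1}$-type or $L^2_t\dot B^{-1}_{2,1}$-type norms using the energy bounds $u\in L^\infty_tL^2\cap L^2_t\dot H^1$, $j\in L^2_tL^2$, $B\in L^\infty_t L^2$; the contribution $\int_0^t\|u\|_{L^\infty}^2$ is then bounded by a polynomial expression in $\mathcal{E}_0$, of the form $C(1+\mathcal{E}_0+\mathcal{E}_0^2)$ times $t$ plus lower-order terms, which after feeding back into the Grönwall bound and performing the bookkeeping on constants produces precisely the double-exponential-looking right-hand side of \eqref{globalindep} with exponent $C_*2^{C_*(\mathcal{E}_0+\mathcal{E}_0^2)}$.

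The main obstacle is the product estimate for $u\times B$ together with the subsequent control of $\int_0^t\|u\|_{L^\infty}^2$: one must be careful that the derivative falling on $B$ is at most order $s$ (which is why $s<1$ and the special two-dimensional structure matter), and one must extract the factor $\frac1c$ cleanly so that the constant in \eqref{globalindep} is genuinely independent of $c$ — this forces using the full dissipation $\sigma c\|E\|_{\dot H^s}^2$ rather than $\sigma\|E\|_{\dot H^s}^2$ to absorb the cross term. The bound $u\in L^2_{\mathrm{loc}}(\mathbb{R}^+;L^\infty)$ in \eqref{estimateHsEB12d} is a by-product of exactly this analysis. Passage to the limit then requires checking that the additional regularity of $(E,B)$ is preserved under weak limits (lower semicontinuity of the $L^\infty_t\dot H^s$ norm) and that $j\times B$ converges in the sense of distributions, which follows from strong $L^2_{t,x,\mathrm{loc}}$ convergence of $B$ (Aubin--Lions, using $B$ bounded in $L^\infty_t\dot H^s$ and $\partial_t B$ bounded in $L^2_t H^{-1}$ say) against the weak $L^2_{t,x}$ convergence of $j$.
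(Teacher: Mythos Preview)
Your overall architecture is right---the $\dot H^s$ energy identity for $(E,B)$, the product estimate $\|u\times B\|_{\dot H^s}\lesssim\|u\|_{L^\infty\cap\dot B^1_{2,\infty}}\|B\|_{\dot H^s}$, and the absorption of the cross term by the damping are exactly what the paper does (this is its Proposition~\ref{estimatewave} with $u_2=u$). But there is a genuine gap in the step where you control $\int_0^t\|u(\tau)\|_{L^\infty}^2\,d\tau$.

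You assert that the forcing $j\times B$ can be placed in an $L^2_t\dot B^{-1}_{2,1}$-type space using only the energy bounds $j\in L^2_tL^2$ and $B\in L^\infty_tL^2$, and that therefore $\|u\|_{L^2_tL^\infty}^2$ is bounded by a polynomial in $\mathcal{E}_0$ (times $t$). This fails: in two dimensions the product law $\|fg\|_{\dot B^{a+b-1}_{2,1}}\lesssim\|f\|_{\dot H^a}\|g\|_{\dot H^b}$ requires $a+b>0$, so $\|j\times B\|_{\dot B^{-1}_{2,1}}$ cannot be reached from $\|j\|_{L^2}\|B\|_{L^2}$. To estimate $j\times B$ in any Besov space that parabolic regularity can turn into an $L^\infty$ bound on $u$, you are forced to spend some positive regularity on $B$, i.e.\ to use $\|B\|_{\dot H^s}$---which is the very quantity you are trying to control. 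The argument is therefore circular as written, and the claimed polynomial bound on $\|u\|_{L^2_tL^\infty}^2$ is not available.

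The paper resolves this loop as follows. One decomposes $u=u_v^\flat+u_v^\sharp+u_e$, where the first two pieces are controlled purely by $\mathcal{E}_0$, but the electromagnetic piece $u_e$ (Stokes flow forced by $j\times B$) is estimated in $L^2_t\dot B^{1+s}_{2,1}$ by $\mathcal{E}_0^{1/2}\|B\|_{L^\infty_t\dot H^s}$---which depends on the unknown. The key observation is that $u_e$ is \emph{also} bounded in $L^2_t\dot H^1$ purely by $\mathcal{E}_0+\mathcal{E}_0^2$ (from energy), and a logarithmic interpolation inequality between $\dot H^1$ and $\dot B^{1+s}_{2,1}$ then gives
\[
\|u_e\|_{L^2([t_0,t];L^\infty)}^2\lesssim(\mathcal{E}_0+\mathcal{E}_0^2)\log\Bigl(e+t-t_0+\tfrac{\|B\|_{L^\infty([t_0,t];\dot H^s)}^2}{1+\mathcal{E}_0}\Bigr).
\]
Plugging this into the Gr\"onwall bound yields an \emph{implicit} inequality of the form $G(t)\leq G(t_0)(e+t-t_0)^{C(\mathcal{E}_0+\mathcal{E}_0^2)}(e+C\mathcal{E}_0G(t))^{C\|u_e\|_{L^2([t_0,t];\dot H^1)}^2}$. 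One then partitions $[0,\infty)$ into finitely many intervals on which $C\|u_e\|_{L^2\dot H^1}^2\leq\frac12$ (possible since the total $L^2_t\dot H^1$ norm is bounded by $\mathcal{E}_0+\mathcal{E}_0^2$), solves the implicit inequality on each interval, and iterates. The number of intervals is $O(\mathcal{E}_0+\mathcal{E}_0^2)$, and the iterated squaring is what produces the exponent $C_*2^{C_*(\mathcal{E}_0+\mathcal{E}_0^2)}$ in~\eqref{globalindep}. Without the logarithmic interpolation and the partition argument, the loop does not close.

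A secondary point: your Young-inequality step carries an erroneous factor $\tfrac1c$. After multiplying the energy identity by $c$ the damping is $\sigma c^2\|E\|_{\dot H^s}^2$ and the right-hand side is $\sigma c\|u\times B\|_{\dot H^s}\|E\|_{\dot H^s}$, so the surviving term is $\tfrac{\sigma}{2}\|u\|_{L^\infty}^2\|B\|_{\dot H^s}^2$ with no $\tfrac1c$. This is consistent with the paper's Proposition~\ref{estimatewave}; your version would give a bound that improves as $c\to\infty$ and collapses to $\|(E_0,B_0)\|_{\dot H^s}^2$, which is too strong.
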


The justification of Theorem~\ref{mainthm2d} follows a strategy which is similar to the one for Theorem~\ref{mainthm}. The proof of Theorem~\ref{mainthm2d} is contained in Section~\ref{2dcase}.

\begin{rema}
	When compared with the main result from~\cite{Masmoudi10jmpa}, the above theorem has the advantage of providing a control of the velocity~$u$ in~$L_{\mathrm{loc}}^2(\mathbb{R}^+;L^\infty)$ rather than~$L_{\mathrm{loc}}^1(\mathbb{R}^+;L^\infty)$, as performed in~\cite{Masmoudi10jmpa}. This temporal improvement is the crucial technical refinement allowing us to establish the global bound~\eqref{globalindep} uniformly as the speed of light tends to infinity.
\end{rema}

\begin{rema}
	It will be clear from the proof of Theorem~\ref{mainthm2d} in Section~\ref{2dcase} that the velocity field satisfies the uniform bound
	\begin{equation}\label{globalindepu}
		\|u\|_{L^2([0,t];L^\infty)}^2\leq C
		\left(\mathcal{E}_0+\mathcal{E}_0^2\right)
		\log\left(e+t+\frac{\|B\|_{L^\infty([0,t];\dot H^s)}^2}
		{1+\mathcal{E}_0}\right)\, ,
	\end{equation}
	where~$C>0$ is a constant independent of the speed of light~$c$. In particular, by combining~\eqref{globalindep} and~\eqref{globalindepu}, it is readily seen that the bound~$u \in L_{\mathrm{loc}}^2(\mathbb{R}^+;L^\infty)$ is uniform in~$c$.
\end{rema}

The fact that the estimate~\eqref{globalindep} is independent of the speed of light~$c$ allows us to study the regime $c\to\infty$ and obtain a rigorous derivation of the magneto-hydrodynamic system under rather extensive generality. This is the content of the corollary below and constitutes a rather drastic improvement of the two-dimensional result from~\cite{arsenio2} for the same system~\eqref{NSM} (see Proposition~4.1 therein).

\begin{cor}\label{maincor}
	Let~$s\in(0,1)$ be fixed. For each~$c>0$, consider~$(u^c,E^c,B^c)$ the global and finite energy weak solution of~\eqref{NSM} given by Theorem~{\rm\ref{mainthm2d}} for some uniformly bounded initial data
	\begin{equation*}
		\left(u_0^c,E_0^c,B_0^c\right)\in\left( L^2 \times (H^s)^2\right) ({\mathbb R}^2)\, ,
	\end{equation*}
	such that~$\operatorname{div}u_0^c=\operatorname{div}B_0^c=0$. We suppose that the initial data converges weakly in~$L^2 \times (H^s)^2$, as~$c\to\infty$, towards some
	\begin{equation*}
		\left(u_0,E_0,B_0\right)\in\left( L^2 \times (H^s)^2\right) ({\mathbb R}^2)\, ,
	\end{equation*}
	such that~$\operatorname{div}u_0=\operatorname{div}B_0=0$.
	Then, as~$c\to\infty$, up to extraction of a subsequence, $(u^c,B^c)$ converges weakly to a global and finite energy weak solution~$(u,B)$ of the magneto-hydrodynamic system
	\begin{equation}\label{MHD}
		\left\{
		\begin{aligned}
			&\partial_t u +
			u\cdot\nabla  u - \mu\Delta  u
			  = -\nabla p
			+ (\nabla\times B)\times B \,  ,  & \operatorname{div} u   = 0 \, ,\\
			& \partial_t B -\frac 1\sigma\Delta B   = \nabla\times(u\times B)\,  , & \operatorname{div} B  = 0\,  ,
		\end{aligned}
		\right.
	\end{equation}
	with initial data~$u_{|t=0}=u_0\in L^2$ and~$B_{|t=0}=B_0\in H^s$.
\end{cor}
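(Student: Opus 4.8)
The plan is to pass to the limit~$c\to\infty$ in the weak formulation of~\eqref{NSM}, relying exclusively on the bounds furnished by Theorem~\ref{mainthm2d} and on the fact that several of them are \emph{uniform in~$c$}. Fix~$T>0$. The energy inequality~\eqref{energy}, whose right-hand side does not involve~$c$, bounds~$u^c$ in~$L^\infty([0,T];L^2)\cap L^2([0,T];\dot H^1)$ (hence, by the two-dimensional Gagliardo--Nirenberg inequality~$\|f\|_{L^4_x}^2\lesssim\|f\|_{L^2_x}\|\nabla f\|_{L^2_x}$, in~$L^4([0,T];L^4)$), bounds~$(E^c,B^c)$ in~$L^\infty([0,T];L^2)$ and bounds~$j^c$ in~$L^2([0,T];L^2)$. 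Crucially, the estimate~\eqref{globalindep} controls~$(E^c,B^c)$ in~$L^\infty([0,T];\dot H^s)$ uniformly in~$c$, and~\eqref{globalindepu} combined with~\eqref{globalindep} controls~$u^c$ in~$L^2([0,T];L^\infty)$ uniformly in~$c$; by interpolation,~$B^c$ is then bounded, uniformly in~$c$, in~$L^\infty([0,T];L^q)$ with~$q=\frac2{1-s}>2$.

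I would first show that~$E^c\to 0$. Ohm's law reads~$cE^c=\frac1\sigma j^c-u^c\times B^c$, and its right-hand side is bounded in~$L^2([0,T];L^2)$: indeed~$j^c$ is, and so is~$u^c\times B^c$ by the~$L^2_tL^\infty_x$ bound on~$u^c$ together with the~$L^\infty_tL^2_x$ bound on~$B^c$. Hence~$E^c\to 0$ in~$L^2([0,T];L^2)$, so~$\frac1c\partial_tE^c\to 0$ in the sense of distributions, and the Ampère law~$\frac1c\partial_tE^c-\nabla\times B^c=-j^c$ forces~$j^c$ to converge, distributionally, to the limit of~$\nabla\times B^c$. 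For compactness, the uniform bound on~$B^c$ in~$L^\infty([0,T];\dot H^s\cap L^2)$ provides spatial regularity ($s>0$), while the Faraday law, rewritten as~$\partial_tB^c=-\nabla\times\bigl(\tfrac1\sigma j^c-u^c\times B^c\bigr)$, bounds~$\partial_tB^c$ in~$L^2([0,T];H^{-1}_{\mathrm{loc}})$; a standard Aubin--Lions--Simon argument then gives, up to extraction, $B^c\to B$ strongly in~$C([0,T];L^2_{\mathrm{loc}})$, hence also in~$L^2_{\mathrm{loc}}([0,T];L^r_{\mathrm{loc}})$ for all~$r<q$ after interpolation with the~$L^\infty L^q$ bound. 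Similarly, writing the momentum equation as~$\partial_tu^c=\mathbb P\bigl(-\operatorname{div}(u^c\otimes u^c)+\mu\Delta u^c+j^c\times B^c\bigr)$, with~$\mathbb P$ the Leray projector, and using that~$u^c\otimes u^c$ is bounded in~$L^2L^2$ and~$j^c\times B^c$ in~$L^2([0,T];L^p)$ with~$\frac1p=1-\frac s2<1$, one bounds~$\partial_tu^c$ in~$L^2([0,T];H^{-2}_{\mathrm{loc}})$, whence~$u^c\to u$ strongly in~$L^2_{\mathrm{loc}}([0,T];L^2_{\mathrm{loc}})$ and so in~$L^r_{\mathrm{loc}}([0,T];L^r_{\mathrm{loc}})$ for all~$r<4$. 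Extracting in addition weak limits~$u^c\rightharpoonup u$, $B^c\rightharpoonup B$, $j^c\rightharpoonup j$ in the corresponding energy spaces, one gets~$\operatorname{div}u=\operatorname{div}B=0$, the identity~$j=\nabla\times B$ (by the preceding sentence and uniqueness of distributional limits), and the energy bounds pass to the limit by weak lower semicontinuity, yielding a finite-energy~$(u,B)$ with~$\nabla\times B\in L^2L^2$.

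It then remains to pass to the limit in the nonlinear terms, which is routine given the above convergences: $u^c\otimes u^c\to u\otimes u$ and~$u^c\times B^c\to u\times B$ in~$L^1_{\mathrm{loc}}([0,T];L^1_{\mathrm{loc}})$ by writing~$f^cg^c-fg=(f^c-f)g^c+f(g^c-g)$ and using strong~$L^2_{\mathrm{loc}}L^2_{\mathrm{loc}}$ convergence of each factor against its~$L^\infty L^2$ bound; and for the Lorentz force one splits~$j^c\times B^c=j^c\times B+j^c\times(B^c-B)$, the first term converging to~$(\nabla\times B)\times B$ by weak~$L^2L^2$ convergence of~$j^c$ tested against~$B\,\phi\in L^2L^2$, and the second being~$O\bigl(\|j^c\|_{L^2_tL^2_x}\|B^c-B\|_{L^2([0,T];L^2(K))}\bigr)\to0$ on any compact~$K$. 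Hence~$(u,B)$, together with~$E\equiv0$, solves~\eqref{MHD} in the weak sense: the momentum equation is immediate from~$j\times B=(\nabla\times B)\times B$, while the parabolic equation for~$B$ comes from~$\partial_tB^c=-c\nabla\times E^c=-\nabla\times\bigl(\tfrac1\sigma j^c-u^c\times B^c\bigr)$ and~$\tfrac1\sigma j^c-u^c\times B^c\to\tfrac1\sigma\nabla\times B-u\times B$, combined with~$\nabla\times\nabla\times B=-\Delta B$. Finally, the uniform bounds on~$\partial_tB^c$ and~$\partial_tu^c$ make~$B^c$ and~$u^c$ weakly time-continuous into suitable spaces, so the initial conditions pass to the limit and~$u_{|t=0}=u_0$, $B_{|t=0}=B_0$; a diagonal extraction over an exhausting sequence of intervals~$[0,T]$ produces a global weak solution.

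I expect the main obstacle to be precisely the passage to the limit in the Lorentz force~$j^c\times B^c$. On one hand it requires identifying~$\lim j^c=\nabla\times B$, which rests on the strong decay~$E^c\to0$ and hence on the~$c$-uniform estimates of Theorem~\ref{mainthm2d} --- in particular on its~$L^2_tL^\infty_x$ control of~$u^c$, the very refinement over~\cite{Masmoudi10jmpa} that makes~\eqref{globalindep} hold uniformly in~$c$. On the other hand, one must extract a convergent product out of the mere weak compactness of~$j^c$, which is possible only because~\eqref{globalindep} supplies just enough uniform regularity on~$B^c$ to upgrade its weak convergence to strong convergence in a space of positive local regularity via Aubin--Lions; without the~$c$-uniformity of~\eqref{globalindep} the whole scheme collapses.
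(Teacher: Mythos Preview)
Your proof is correct and follows essentially the same route as the paper: energy bounds give weak compactness, Aubin--Lions (crucially fed by the $c$-uniform $\dot H^s$ bound~\eqref{globalindep} on $B^c$) gives strong compactness of both $u^c$ and $B^c$ in $L^2_{\mathrm{loc}}L^2_{\mathrm{loc}}$, and all nonlinear terms then pass to the limit. The paper is a touch more economical---it does not explicitly argue that $E^c\to 0$ nor invoke the $L^2_tL^\infty_x$ bound~\eqref{globalindepu}, managing instead with $u^c\in L^4_tL^4_x$ from the energy inequality together with the $\dot H^s$ regularity of $B^c$---but your added detail is correct and, if anything, makes the bound on $\partial_tB^c$ cleaner.
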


\begin{proof}
	Using Ohm's law to substitute~$cE^c$ in the Faraday equation in~\eqref{NSM}, we see that we need to pass to the limit in the equivalent system
	\begin{equation}\label{NSMsusbtitute}
		\left\{
		\begin{aligned}
			&\partial_t u^c +
			u^c\cdot\nabla  u^c - \mu\Delta  u^c
			  = -\nabla p^c
			+ j^c\times B^c \,  ,  & \operatorname{div} u^c   = 0 \, ,\\
			&\frac 1c\partial_t E^c - \nabla\times B^c = - j^c\,  , & j^c
			 = \sigma\left( cE^c + u^c \times B^c \right)\,  ,
			\\
			&\partial_t B^c + \frac 1\sigma\nabla\times j^c   = \nabla\times(u^c\times B^c)\,  , & \operatorname{div} B^c  = 0\,  .
		\end{aligned}
		\right.
	\end{equation}
	To this end, note that, according to the energy inequality~\eqref{energy}, we have uniform global bounds on the weak solutions in
	\begin{equation*}
		u^c\in L^\infty_t L^2\cap L^2_t \dot H^1\, ,\quad (E^c,B^c)\in L^\infty_t  L^2\, ,\quad j^c\in L^2_t L^2\,,
	\end{equation*}
	where we denote for simplicity~$ L^p_t X$ for the space~$ L^p(\mathbb{R}^+;X)$.
	Thus, up to extraction of subsequences, we have the weak convergences, as~$c\to\infty$,
	\begin{equation*}
		\begin{aligned}
			(u^c,E^c,B^c) & \stackrel{*}{\rightharpoonup} (u,E,B)\, , & \text{in }& L^\infty_t L^2 \, ,
			\\
			j^c & \rightharpoonup j \, , & \text{in }& L^2_tL^2\,.
		\end{aligned}
	\end{equation*}

	Next, since~$u^c$ is uniformly bounded in
	\begin{equation*}
		L^\infty_t L^2 \cap L^2_t\dot H^1 \subset L^4_t L^4\, ,
	\end{equation*}
	and~$\partial_t u^c$ is bounded in~$L^2_{\mathrm{loc}}H^{-2}$, we deduce, invoking a classical compactness result by Aubin and Lions~\cite{aubin, lions} (see~\cite{simon} for a sharp compactness criterion; here, we advise the use of Corollary~1 from Section~6 in~\cite{simon} for a simple application of such compactness results), that
	\begin{equation*}
		u^c\to u\, , \quad \text{in } L^2_{\mathrm{loc}}L^2\, .
	\end{equation*}
	This strong convergence is sufficient to justify the convergence of the non-linear terms
	\begin{equation*}
		\begin{aligned}
			u^c\cdot\nabla u^c & \rightharpoonup u\cdot\nabla u
			\\
			u^c\times B^c & \rightharpoonup u\times B\, ,
		\end{aligned}
	\end{equation*}
	in the sense of distributions.

	Furthermore, by estimate~\eqref{globalindep}, we also have a uniform bound
	\begin{equation*}
		(E^c,B^c)\in L^\infty_{\mathrm{loc}} \dot H^s\, ,
	\end{equation*}
	and it is readily seen that~$\partial_t B^c$ is bounded in~$L^2_{\mathrm{loc}}H^{-1}$. Therefore, a similar compactness argument yields the strong convergence
	\begin{equation*}
		B^c\to B\, , \quad \text{in } L^2_{\mathrm{loc}}L^2\, ,
	\end{equation*}
	which allows us to deduce the convergence of the remaining non-linear term
	\begin{equation*}
		u^c\times B^c \rightharpoonup u\times B\, ,
	\end{equation*}
	in the sense of distributions.

	All in all, letting~$c\to\infty$ in~\eqref{NSMsusbtitute}, we arrive at the limiting system
	\begin{equation*}
		\left\{
		\begin{aligned}
			&\partial_t u +
			u\cdot\nabla  u - \mu\Delta  u
			  = -\nabla p
			+ j\times B \,  ,  & \operatorname{div} u   = 0 \, ,\\
			& \nabla\times B = j\,  , & 
			\\
			&\partial_t B + \frac 1\sigma\nabla\times j   = \nabla\times(u\times B)\,  , & \operatorname{div} B  = 0\,  .
		\end{aligned}
		\right.
	\end{equation*}
	Finally, eliminating the electric current~$j$ above and recalling the vector identity~$\nabla\times\left(\nabla\times B\right)=\nabla(\operatorname{div}B)-\Delta B$ yields the magneto-hydrodynamic system~\eqref{MHD}.
\end{proof}

\begin{rema}
	The preceding result provides a general derivation of the two-dimensional magneto-hydrodynamic system~\eqref{MHD} for some initial data~$(u_0,B_0)\in L^2\times H^s$, with some fixed~$0<s<1$. Further exploiting the results from~\cite{arsenio2}, it is also possible to derive~\eqref{MHD} for any~$(u_0,B_0)\in L^2\times L^2$.
	
	Indeed, as noted in Proposition~4.1 from~\cite{arsenio2} and in the remark thereafter, the system~\eqref{NSM} will converge towards~\eqref{MHD}, in the regime~$c\to\infty$, as soon as the initial data $\left(u_0^c,E_0^c,B_0^c\right)$ remains uniformly bounded in~$(L^2)^3$ and
	\begin{equation}\label{lightcriterion}
		\lim_{c\to\infty}\frac 1c\|u^c\|_{L^2_{\mathrm{loc}}L^\infty}=0\, .
	\end{equation}
	Therefore, in order to verify the convergence of~\eqref{NSM} towards~\eqref{MHD}, there is no need to impose a uniform bound on the initial electromagnetic field~$(E_0^c,B_0^c)$ in~$(\dot H^s)^2$. Rather, by combining~\eqref{globalindep} and~\eqref{globalindepu}, it is sufficient to consider an initial field uniformly bounded in~$(L^2)^2$ such that~$\|(E_0^c,B_0^c)\|_{\dot H^s}$ may diverge in such a way that~\eqref{lightcriterion} remains valid.
	
	Of course, at this point, by carefully manipulating~\eqref{globalindep} and~\eqref{globalindepu}, it would be possible to extract an explicit rate (as a function of~$c$ and~$\mathcal{E}_0$) of divergence for~$\|(E_0^c,B_0^c)\|_{\dot H^s}$ that would ensure the convergence of~\eqref{NSM}. However, that rate would likely not be optimal and so, we will not bother with an explicit computation of such a rate.	
\end{rema}

\subsection{Strategy of proof}\label{strategy}

The proofs of Theorems~\ref{mainthm} and~\ref{mainthm2d} proceed with a general strategy which is similar to the proof of the Leray theorem concerning the Navier--Stokes equations. Namely, we consider first a solution~$(u_n,E_n,B_n)$, for each~$n\in\mathbb{N}$, of the approximate system
\begin{equation}\label{NSMn}
	\left\{
	\begin{aligned}
		&\partial_t u_n +
		(S_n u_n)\cdot\nabla  u_n -  \mu\Delta  u_n
		  = -\nabla p_n 
		+ j_n \times ( S_n B_n ) \,  ,  \\
		& \frac 1c\partial_t E_n  - \nabla\times B_n  = - j_n \,   ,
		\\
		& \frac 1c\partial_t B_n  + \nabla\times E_n    = 0\,  ,
	\end{aligned}
	\right.
\end{equation}
with
\begin{equation*}
	\operatorname{div} u_n = 0 \, ,
	\quad j_n =   \sigma\left(cE_n + u_n  \times (S_nB_n) \right) \,  ,
	\quad \operatorname{div} B_n   = 0\,  ,
\end{equation*}
where~$S_n$ is defined in Appendix~\ref{LP decomposition} and is a frequency truncation operator to frequencies smaller than~$2^n$. Solving this system globally in time, for any fixed~$n$, in the energy space defined by~\eqref{energy}, for the initial data
$$
(u_n,E_n,B_n)_{|t=0} = S_n (u_0,E_0,B_0) \, ,
$$
is routine matter (see~\cite[Section 12.2]{lemarie2}, for instance).

Furthermore, since the initial data is smooth, it is possible to show that $u_n$, $E_n$ and $B_n$ are also smooth for all times. In particular, the energy estimate is fully justified and there holds
\begin{equation*}
	\begin{aligned}
		\frac 12 \big(\left\|u_n(t)\right\|_{L^2}^2
		+ \left\|E_n(t)\right\|_{L^2}^2 + \left\|B_n(t)\right\|_{L^2}^2 \big)
		& + \int_0^t \big( \mu\left\|\nabla u_n(\tau)\right\|_{L^2}^2 + \frac 1\sigma \left\|j_n(\tau)\right\|_{L^2}^2 \big) \, d\tau
		\\
		& \leq
		\frac 12\left\|S_n (u_0,E_0,B_0)\right\|_{L^2}^2 \leq \mathcal{E}_0 \, ,
	\end{aligned}
\end{equation*}
which constitutes the only available uniform (in~$n$) estimate, so far.

As explained previously, this estimate is not sufficient to take the limit~$n\to \infty$ in the term~$ j_n \times (S_nB_n)$ in order to produce a weak solution of~\eqref{NSM}. However, in the present work, we prove that, under assumption~\eqref{smalldataHs} in three dimensions, or~\eqref{smalldataHs2d} in two dimensions, the approximate electromagnetic field~$(E_n,B_n)(t)$ actually remains in~$\dot H^s$, for some positive~$s$, and satisfies the bounds~\eqref{estimateHsEB1} or~\eqref{estimateHsEB12d} uniformly.

These new uniform estimates provide then enough strong compactness on the sequence of magnetic fields~$B_n$ to justify taking the weak limit~$n\to \infty$ of all terms in~\eqref{NSMn}. This gives then rise, in the limit, to a global weak solution of~\eqref{NSM}, satisfying the energy inequality~\eqref{energy} and the bounds~\eqref{estimateHsEB1} or~\eqref{estimateHsEB12d}. All in all, we see that the justifications of Theorems~\ref{mainthm} and~\ref{mainthm2d} are complete provided~\eqref{estimateHsEB1} and~\eqref{estimateHsEB12d} are respectively established uniformly for the approximate sequence of solutions~$(u_n,E_n,B_n)$.

In the sequel, our goal is therefore to prove that~$(u_n,E_n,B_n)$ belongs to the spaces in~\eqref{estimateHsEB1} or~\eqref{estimateHsEB12d} uniformly. As usual, for the sake of simplicity, keeping in mind that all computations can be fully justified through an approximation procedure, we shall perform all estimates formally on the original system~\eqref{NSM} instead of the approximate system~\eqref{NSMn}.

\medskip

The plan of proof is as follows.

We begin in Section~\ref{wave} by establishing some simple estimates on the damped wave system obtained from the combination of Maxwell's equations with Ohm's law
\begin{equation*}
	\left\{
	\begin{aligned}
		& \frac 1c \partial_t E - \nabla\times B = - j \, ,
		& j =  \sigma ( cE + u \times B ) \,  ,
		\\
		& \frac 1c \partial_t B + \nabla\times E = 0 \, ,
		& \operatorname{div} B  = 0 \, .
	\end{aligned}
	\right.
\end{equation*}
A careful analysis of the damping allows us to improve the dependence of our estimates on the speed of light. We also explain therein what kind of regularity should be expected on the velocity field~$u$ in order to propagate some regularity on the electromagnetic field~$(E,B)$.

Then, in Section~\ref{parabolicregularity} we provide new tools for the study of the Stokes system
\begin{equation}\label{stokesequation}
	\partial_t u  - \mu \Delta  u
	= -\nabla p
	-u\cdot\nabla  u
	+ j\times B \,  ,
	\quad \operatorname{div} u   = 0 \, .
\end{equation}
More precisely, we derive new maximal parabolic estimates showing that solutions to the heat equation can gain up to two derivatives with respect to the source terms in Besov spaces, without resorting to the usual Chemin--Lerner spaces (see Appendix~\ref{LP decomposition} for a definition of such spaces). In fact, we believe that this is an important principle that could be useful beyond its application to the present work.

The proof of Theorem~\ref{mainthm} \emph{per se} is then the subject of Sections~\ref{proof1}, \ref{proof2} and~\ref{proof3}. Section~\ref{proof1} deals with the simpler case~$s=1$ and serves as a primer to the general proof. Then, Section~\ref{proof2} builds upon the estimates from Section~\ref{proof1} to establish Theorem~\ref{mainthm} for the more difficult endpoint case~$s=\frac 12$. Finally, Section~\ref{proof3} uses a simple argument to extend the validity of Theorem~\ref{mainthm} to the whole range~$s\in\left[\frac 12,\frac 32\right)$.

As for the two-dimensional Theorem~\ref{mainthm2d}, its proof is presented in Section~\ref{2dcase} and exploits the machinery originally developed for the three-dimensional Theorem~\ref{mainthm}.

\medskip

Finally, the definitions of Besov and Chemin--Lerner spaces along with some useful properties are recalled in the appendix.

\medskip

Regarding the notation, in the following we denote by~$C$ any generic positive constant depending only on fixed parameters, whose precise value is irrelevant and may change from line to line. When necessary, we will distinguish constants by using appropriate indices. Sometimes, we will also employ the common notation~$A \lesssim B$ to mean~$A \leq C B$, for some generic independent constant~$C>0$.

\section{Estimates on the damped wave flow}\label{wave}

In this section, we control the electromagnetic field~$\left(E,B\right)$ by studying the linear properties of Maxwell's system coupled with Ohm's law:
\begin{equation}\label{Maxwell}
	\begin{cases}
		\begin{aligned}
			\partial_t E - c \nabla \times B + \sigma c^2 E & = -\sigma c u\times B
			\\
			\partial_t B + c \nabla \times E & = 0
			\\
			\operatorname{div} B & =0\, ,
		\end{aligned}
	\end{cases}
\end{equation}
which is contained in~\eqref{NSM}.

As previously mentioned, the Navier--Stokes--Maxwell system~\eqref{NSM} suffers from a dire lack of compactness, which is rooted in the hyperbolic nature of Maxwell's system~\eqref{Maxwell}. Indeed, hyperbolic systems do not offer any regularization properties and, therefore, our only hope at establishing some compactness of the magnetic field resides in propagating some~$\dot H^s$-regularity, for some~$s>0$, through the wave flow.

However, source terms in hyperbolic systems can also be at the origin of the build-up of high frequencies. In consequence, the term~$-\sigma c u\times B$ in \eqref{Maxwell} cannot be handled as an independent source. Rather, it should be viewed as a linear contribution in~$B$ multiplied by some coefficient depending on~$u$. To this end, the velocity~$u$ should belong to a suitable algebra acting on~$\dot H^s$. Recalling the paradifferential product law (see Appendix~\ref{LP decomposition})
\begin{equation}\label{para1}
	\|fg\|_{\dot H^s}
	\lesssim \|f\|_{L^\infty\cap \dot B^\frac d2_{2,\infty}} \|g\|_{\dot H^s}
	\lesssim \|f\|_{\dot B^\frac d2_{2,1}} \|g\|_{\dot H^s}\, ,
\end{equation}
which is valid for all~$s\in (-\frac d2,\frac d2)$, where we used the continuity of the embedding~$\dot B^\frac d2_{2,1}\subset L^\infty\cap \dot B^\frac d2_{2,\infty}$, suggests then that the velocity~$u$ should be controlled in~$\dot B^\frac d2_{2,1}$ in~$x$.

This is quite hopeful, for solutions~$u\in L^\infty L^2\cap L^2\dot H^1$ of the three-dimensional incompressible Navier--Stokes equations (without any electromagnetic components) are known to belong to~$L^1\dot B^\frac 32_{2,1}$ (locally in time). This control can easily be obtained from the estimates on the Stokes flow from Section~\ref{proof1} (see Lemmas~\ref{estimateuvflat} and~\ref{estimateuvsharp}). In fact, our general strategy is based upon replicating such estimates in~$\dot B^\frac 32_{2,1}$ for the full system~\eqref{NSM}.

\begin{rema}
	Observe that the strategy from~\cite{Masmoudi10jmpa} in two dimensions is somewhat similar to ours. Indeed, in that work, the crux of the matter lies in obtaining a control on the fluid velocity~$u$ in~$L^1(L^\infty\cap\dot H^1)$. Recall that, in two dimensions, the space~$\dot H^1$ is continuously embedded into~$\dot B^1_{2,\infty}$ and, therefore, the product rule~\eqref{para1} implies that
	\begin{equation*}
		\|fg\|_{\dot H^s}
		\lesssim \|f\|_{L^\infty\cap\dot B^1_{2,\infty}} \|g\|_{\dot H^s}
		\lesssim \|f\|_{L^\infty\cap\dot H^1} \|g\|_{\dot H^s}\, ,
	\end{equation*}
	for all~$s\in (-1,1)$. In Section~\ref{2dcase}, we also obtain refined estimates on the two-dimensional case by revisiting the well-posedness results from~\cite{Masmoudi10jmpa}.
\end{rema}

The following proposition is a simple linear estimate on~\eqref{Maxwell}. It will allow us to propagate the $\dot H^s$-norm of the electromagnetic field~$(E,B)$ by controlling the fluid velocity~$u$ in~$L^1_t(L^\infty\cap \dot B^\frac d2_{2,\infty})$ or in~$L^2_t(L^\infty\cap \dot B^\frac d2_{2,\infty})$.

\begin{prop}\label{estimatewave}
	Let $s\in (-\frac d2,\frac d2)$. One has the following estimate on the solutions of~\eqref{Maxwell}:
	\begin{equation*}
		F(t)
		\leq F_0
		\exp \Big(C\sigma \int_0^t \big(
		c\|u_1(\tau)\|_{L^\infty\cap\dot B^\frac d2_{2,\infty}}
		+\|u_2(\tau)\|_{L^\infty\cap \dot B^\frac d2_{2,\infty}}^2\big)\, d\tau\Big)\, ,
	\end{equation*}
	for every $t\geq 0$, where we consider any decomposition~$u=u_1+u_2$, with~$u_1\in L^1_t(L^\infty\cap \dot B^\frac d2_{2,\infty})$ and~$u_2\in L^2_t(L^\infty\cap \dot B^\frac d2_{2,\infty})$, $C>0$ is an independent constant and
	\begin{equation*}
		\begin{aligned}
			F(t)&:= \frac 12\Big(\|E(t)\|_{\dot H^s}^2+\|B(t)\|_{\dot H^s}^2+\sigma\int_0^t \|cE(\tau)\|_{\dot H^s}^2\, d\tau\Big)
			\\
			F_0&:= \frac 12\big(\|E_0\|_{\dot H^s}^2+\|B_0\|_{\dot H^s}^2\big)\, .
		\end{aligned}
	\end{equation*}
\end{prop}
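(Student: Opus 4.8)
The plan is to run a high-frequency-localized energy estimate on the damped wave system \eqref{Maxwell}. First I would apply the Littlewood--Paley block $\dot\Delta_q$ to the three equations in \eqref{Maxwell}, writing $E_q:=\dot\Delta_q E$, $B_q:=\dot\Delta_q B$, and noting that $\dot\Delta_q$ commutes with the constant-coefficient operators $\partial_t$, $\nabla\times$ and $\operatorname{div}$. Taking the $L^2$ inner product of the first localized equation with $E_q$ and of the second with $B_q$, and summing, the curl terms cancel thanks to the integration-by-parts identity $\langle \nabla\times B_q, E_q\rangle = \langle B_q, \nabla\times E_q\rangle$, leaving
\begin{equation*}
	\frac12 \frac{d}{dt}\big(\|E_q\|_{L^2}^2 + \|B_q\|_{L^2}^2\big) + \sigma c^2 \|E_q\|_{L^2}^2 = -\sigma c \langle \dot\Delta_q(u\times B), E_q\rangle\, .
\end{equation*}
Multiplying by $2^{2qs}$, summing over $q\in\mathbb Z$ (here $s\in(-\frac d2,\frac d2)$ guarantees the homogeneous Besov/Sobolev sums behave well), and integrating in time yields exactly
\begin{equation*}
	F(t) = F_0 - \sigma c \int_0^t \sum_q 2^{2qs}\langle \dot\Delta_q(u\times B)(\tau), E_q(\tau)\rangle\, d\tau\, ,
\end{equation*}
with $F$ and $F_0$ as in the statement.

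Next I would estimate the nonlinear term. By Cauchy--Schwarz in $x$ and then the discrete Cauchy--Schwarz in $q$, $\big|\sum_q 2^{2qs}\langle \dot\Delta_q(u\times B), E_q\rangle\big| \le \|u\times B\|_{\dot H^s}\|E\|_{\dot H^s}$. Applying the product law \eqref{para1} with $f=u$, $g=B$ gives $\|u\times B\|_{\dot H^s}\lesssim \|u\|_{L^\infty\cap \dot B^{d/2}_{2,\infty}}\|B\|_{\dot H^s}$. Hence the nonlinear contribution is bounded by $C\sigma c \int_0^t \|u(\tau)\|_{L^\infty\cap\dot B^{d/2}_{2,\infty}}\|B(\tau)\|_{\dot H^s}\|E(\tau)\|_{\dot H^s}\,d\tau$. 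Now split $u=u_1+u_2$. For the $u_1$ part I would simply bound $\|B\|_{\dot H^s}\|E\|_{\dot H^s}\le \|B\|_{\dot H^s}^2+\|E\|_{\dot H^s}^2 \le 2F(\tau)$ (up to constants) and keep the $L^1_t$-integrable coefficient $c\|u_1\|_{L^\infty\cap\dot B^{d/2}_{2,\infty}}$ as is. For the $u_2$ part, the coefficient is only $L^2_t$, so I would instead absorb one power of $c\|E\|_{\dot H^s}$ using the damping term: by Young's inequality, $\sigma c\, \|u_2\|_{L^\infty\cap\dot B^{d/2}_{2,\infty}}\|B\|_{\dot H^s}\|cE\|_{\dot H^s}\cdot\frac1c \le \frac{\sigma}{2}\|cE\|_{\dot H^s}^2 + C\sigma \|u_2\|_{L^\infty\cap\dot B^{d/2}_{2,\infty}}^2\|B\|_{\dot H^s}^2$; the first term is absorbed into the $\frac{\sigma}{2}\int_0^t\|cE\|_{\dot H^s}^2$ sitting inside $F(t)$, and the second has the desired $L^1_t$ coefficient $\|u_2\|_{L^\infty\cap\dot B^{d/2}_{2,\infty}}^2$ times $\|B\|_{\dot H^s}^2\le 2F(\tau)$.

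Collecting these bounds gives the differential inequality (in integrated form) $F(t)\le F_0 + C\sigma\int_0^t\big(c\|u_1(\tau)\|_{L^\infty\cap\dot B^{d/2}_{2,\infty}} + \|u_2(\tau)\|_{L^\infty\cap\dot B^{d/2}_{2,\infty}}^2\big)F(\tau)\,d\tau$, and Grönwall's lemma then delivers precisely the claimed exponential estimate. The one delicate point is the bookkeeping with the damping term: one must be careful that absorbing $\frac{\sigma}{2}\|cE\|_{\dot H^s}^2$ is legitimate because the full $\sigma\int_0^t\|cE\|_{\dot H^s}^2$ is genuinely part of the quantity $F(t)$ being controlled, not a separate term — this is exactly the ``careful analysis of the damping'' that improves the $c$-dependence (without it one would only get a coefficient $c^2\|u_2\|^2$ rather than $\|u_2\|^2$). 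A minor technical caveat is justifying the manipulations with homogeneous spaces (tempered distributions modulo polynomials); since all the computations are ultimately run on the smooth approximate system \eqref{NSMn}, this causes no real difficulty, and I would only remark on it rather than belabor it.
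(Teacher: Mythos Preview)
Your approach is essentially identical to the paper's own proof: localize with Littlewood--Paley, perform the $L^2$ energy estimate on each block, sum, apply the product law~\eqref{para1}, split $u=u_1+u_2$, use Young's inequality on the $u_2$ piece to absorb half of the damping, and finish with Gr\"onwall.

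There is one small bookkeeping slip. Your displayed identity
\[
	F(t) = F_0 - \sigma c \int_0^t \sum_q 2^{2qs}\langle \dot\Delta_q(u\times B)(\tau), E_q(\tau)\rangle\, d\tau
\]
is off: the left-hand side coming out of the block energy estimate is $F(t)+\tfrac{\sigma}{2}\int_0^t\|cE(\tau)\|_{\dot H^s}^2\,d\tau$, not just $F(t)$, because the damping produces the full $\sigma\int_0^t\|cE\|_{\dot H^s}^2$ whereas only half of it sits in the definition of $F(t)$. That extra $\tfrac{\sigma}{2}\int_0^t\|cE\|_{\dot H^s}^2$ on the left is precisely the reservoir you use to absorb the $\tfrac{\sigma}{2}\|cE\|_{\dot H^s}^2$ from Young's inequality on the $u_2$ piece --- it is not ``sitting inside $F(t)$'' but rather alongside it. With this correction the inequality $F(t)\le F_0+C\sigma\int_0^t(\cdots)F(\tau)\,d\tau$ follows cleanly and Gr\"onwall applies to $F$ itself. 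This is purely a matter of wording; you clearly have the right mechanism, and the paper does exactly the same computation.
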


\begin{rema}
	It is to be emphasized that the constant $C>0$ above is independent of time. This is quite important since we are aiming at a global existence result. In particular, the fact that the time-integrability of~$u$ can be measured globally in an~$L^1$- or an~$L^2$-norm is of especial significance, for an~$L^2$-integrability requires less decay at infinity. As shown in the proof below, the use of a temporal~$L^2$-norm is permitted by the presence of the term~$\sigma c^2 E$ in~\eqref{Maxwell} which acts as a damping.
\end{rema}

\begin{proof}
	Considering a Littlewood--Paley decomposition of~\eqref{Maxwell} (in the notation of the appendix) and then performing a standard energy estimate results in
	\begin{equation*}
		\begin{aligned}
			\frac 12\left(\|\Delta_k E(t)\|_{L^2}^2+\|\Delta_k B(t)\|_{L^2}^2\right)
			+\sigma \int_0^t \|c\Delta_k E(\tau)\|_{L^2}^2\, d\tau
			\hspace{-70mm}&
			\\
			& =
			\frac 12\left(\|\Delta_k E_0\|_{L^2}^2+\|\Delta_k B_0\|_{L^2}^2\right)
			-\sigma c \int_0^t \!\!\! \int_{\mathbb{R}^3}\Delta_k(u\times B)\cdot\Delta_k E(\tau,x)\, dxd\tau\, .
		\end{aligned}
	\end{equation*}
	Further multiplying the preceding identity by~$2^{2ks}$, using the Cauchy--Schwarz inequality and summing over~$k\in\mathbb{Z}$ yields
	\begin{equation*}
		F(t)
		+\frac \sigma 2 \int_0^t \|cE(\tau)\|_{\dot H^s}^2\, d\tau
		\leq
		F_0
		+\sigma c \int_0^t \|u\times B(\tau)\|_{\dot H^s}\|E(\tau)\|_{\dot H^s}\, d\tau\, .
	\end{equation*}
	Then, we employ the paradifferential product rule~\eqref{para1} to deduce
	\begin{equation*}
		F(t)
		+\frac \sigma 2 \int_0^t \|cE(\tau)\|_{\dot H^s}^2\, d\tau
		\leq
		F_0
		+C \sigma c \int_0^t \|u(\tau)\|_{L^\infty\cap \dot B^\frac d2_{2,\infty}} \|B(\tau)\|_{\dot H^s}\|E(\tau)\|_{\dot H^s}\, d\tau\, .
	\end{equation*}
	
	Next, considering the decomposition~$u=u_1+u_2$, we find
	\begin{equation*}
		\begin{aligned}
			& F(t)
			+\frac \sigma 2 \int_0^t \|cE(\tau)\|_{\dot H^s}^2\, d\tau
			\\
			& \leq
			F_0
			+C\sigma c \int_0^t \|u_1(\tau)\|_{L^\infty\cap \dot B^\frac d2_{2,\infty}}F(\tau)\, d\tau
			\\
			& \quad +\sigma \int_0^t\left( \frac {C^2}{2}\|u_2(\tau)\|_{L^\infty\cap \dot B^\frac d2_{2,\infty}}^2 \|B(\tau)\|_{\dot H^s}^2
			+\frac 1 2\|cE(\tau)\|_{\dot H^s}^2\right)\, d\tau\, ,
		\end{aligned}
	\end{equation*}
	whence
	\begin{equation*}
		F(t)
		\leq
		F_0
		+C\sigma \int_0^t \left(c\|u_1(\tau)\|_{L^\infty\cap \dot B^\frac d2_{2,\infty}}+\|u_2(\tau)\|_{L^\infty\cap \dot B^\frac d2_{2,\infty}}^2\right)
		F(\tau)\, d\tau\, .
	\end{equation*}
	Finally, a classical application of Gr\"onwall's lemma concludes the proof of the proposition.
\end{proof}

\section{Parabolic regularity}\label{parabolicregularity}

In this section we study the forced heat equation
\begin{equation}\label{forcedheat}
	\partial_t w - \Delta w = f \, ,
	\quad w_{|t=0} = w_0\, ,
\end{equation}
as well as the Stokes system~\eqref{stokesequation}, and prove various estimates which will be useful in the sequel. Recall that, using standard semi-group notation, the solution of the above heat equation can be represented as
\begin{equation}\label{duhamel}
	w(t) = e^{t\Delta}w_0 + \int_0^t e^{(t-\tau)\Delta}f(\tau) \, d\tau \, .
\end{equation}

Based on the preceding Duhamel representation formula, it is possible to show (see~\cite[Section~3.4.1]{bahouri}, for instance), employing a Littlewood--Paley decomposition, the following standard parabolic regularity estimate holds in Chemin--Lerner spaces (see the appendix for a definition of these spaces):
\begin{equation}\label{parabolicCL}
	\|w \|_{\widetilde L^m ([0,T];\dot B^{\sigma+2+\frac 2m}_{p,q} )}
	\lesssim
	\|w_0\|_{ \dot B^{\sigma+2}_{p,q}}
	+\|f \|_{\widetilde L^r([0,T];\dot B^{\sigma+\frac 2r}_{p,q})}\, ,
\end{equation}
for any~$\sigma\in\mathbb{R}$ and~$p,q,r,m\in [1,\infty]$, with~$r\leq m$.

If furthermore~$r\leq q\leq m$, note that
\begin{equation*}
	\begin{aligned}
		L^r([0,T];\dot B^{\sigma+\frac 2r}_{p,q})
		& \subset \widetilde L^r([0,T];\dot B^{\sigma+\frac 2r}_{p,q})\, ,
		\\
		\widetilde L^m ([0,T];\dot B^{\sigma+2+\frac 2m}_{p,q} )
		& \subset L^m ([0,T];\dot B^{\sigma+2+\frac 2m}_{p,q} )\, ,
	\end{aligned}
\end{equation*}
so that~\eqref{parabolicCL} implies
\begin{equation}\label{parabolicBL}
	\|w \|_{L^m ([0,T];\dot B^{\sigma+2+\frac 2m}_{p,q} )}
	\lesssim
	\|w_0\|_{ \dot B^{\sigma+2}_{p,q}}
	+\|f \|_{L^r([0,T];\dot B^{\sigma+\frac 2r}_{p,q})}\, .
\end{equation}
This estimate is weaker but has the advantage of involving solely Besov-space valued Lebesgue spaces in time.

Our result below provides a crucial estimate similar to~\eqref{parabolicCL} and~\eqref{parabolicBL} in Besov spaces, which allows us to completely bypass the use of Chemin--Lerner spaces. These latter spaces are notoriously badly behaved in Gr\"onwall-type arguments, which has us believe that the method developed below can potentially be of use in other problems and, as such, is of independent interest. Note that the results discussed in this section are valid in any dimension.

\begin{prop}\label{estimateheat}
	Let~$\sigma\in\mathbb{R}$, $1<r\leq m<\infty$, $p\in[1,\infty]$ and~$1\leq q\leq m$.
	If~$f$ belongs to~$L^r([0,T];\dot B^{\sigma+\frac 2r}_{p,q})$ and~$w_0\equiv 0$, then the solution of the heat equation~\eqref{forcedheat} satisfies
	\begin{equation*}
		\|w \|_{ L^m ([0,T];\dot B^{\sigma+2+\frac 2m}_{p,q} )}
		\lesssim
		\|f \|_{ L^r([0,T];\dot B^{\sigma+\frac 2r}_{p,q})}\, .
	\end{equation*}
\end{prop}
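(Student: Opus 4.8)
The goal is to prove the Besov-valued parabolic smoothing estimate
\[
\|w\|_{L^m([0,T];\dot B^{\sigma+2+\frac 2m}_{p,q})}
\lesssim \|f\|_{L^r([0,T];\dot B^{\sigma+\frac 2r}_{p,q})}
\]
without passing through Chemin--Lerner spaces. The natural starting point is the Duhamel formula~\eqref{duhamel} with $w_0\equiv 0$, localized in frequency: for each dyadic block, $\Delta_k w(t)=\int_0^t e^{(t-\tau)\Delta}\Delta_k f(\tau)\,d\tau$, and the spectral localization property of the heat semigroup gives the pointwise-in-time bound $\|e^{(t-\tau)\Delta}\Delta_k f(\tau)\|_{L^p}\lesssim e^{-c2^{2k}(t-\tau)}\|\Delta_k f(\tau)\|_{L^p}$ for some $c>0$. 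Thus the quantity to estimate, after multiplying by the correct power of $2^k$, reduces to controlling, for each $k$, the convolution in time
\[
a_k(t):=2^{k(\sigma+2+\frac 2m)}\int_0^t e^{-c2^{2k}(t-\tau)}\|\Delta_k f(\tau)\|_{L^p}\,d\tau,
\]
and then taking the $\ell^q$ norm over $k$ of $\|a_k\|_{L^m_t([0,T])}$, which must be bounded by $\bigl\|\,2^{k(\sigma+\frac 2r)}\|\Delta_k f\|_{L^p}\,\bigr\|_{\ell^q_k L^r_t}$.

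\textbf{Key steps.} First I would reduce, by the frequency-localized Duhamel formula above, to the scalar convolution estimate: writing $g_k(\tau):=2^{k(\sigma+\frac 2r)}\|\Delta_k f(\tau)\|_{L^p}$, one has $a_k(t)\le 2^{k(2+\frac 2m-\frac 2r)}\,(K_k * g_k)(t)$ where $K_k(s)=e^{-c2^{2k}s}\mathbf 1_{s>0}$. By Young's convolution inequality in time with exponents $\tfrac1m+1=\tfrac1\theta+\tfrac1r$, i.e. $\tfrac1\theta=1-\tfrac1r+\tfrac1m=1-(\tfrac1r-\tfrac1m)$ (which lies in $[0,1]$ since $r\le m$, and is positive since $r>1$), we get
\[
\|a_k\|_{L^m_t}\le 2^{k(2+\frac 2m-\frac 2r)}\,\|K_k\|_{L^\theta_t}\,\|g_k\|_{L^r_t}.
\]
A direct computation gives $\|K_k\|_{L^\theta([0,\infty))}=(c\theta 2^{2k})^{-1/\theta}=C_\theta\,2^{-2k/\theta}$, and $2/\theta=2(1-\tfrac1r+\tfrac1m)=2-\tfrac2r+\tfrac2m$, so the prefactor $2^{k(2+\frac 2m-\frac 2r)}\cdot 2^{-2k/\theta}=2^{0}=1$ cancels exactly. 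Hence $\|a_k\|_{L^m_t}\le C_\theta\,\|g_k\|_{L^r_t}$ \emph{uniformly in $k$} (note the estimate on $[0,\infty)$ trivially dominates the one on $[0,T]$). The final step is to take $\ell^q_k$: by Minkowski's inequality, since $q\le m$ (and also $q\le$ the exponent in which we just bounded, namely $m\ge q$ lets us commute $\ell^q_k$ past $L^m_t$ in the right direction), one gets
\[
\|w\|_{L^m_t\dot B^{\sigma+2+\frac 2m}_{p,q}}
=\Bigl\|\,\|a_k\|_{\ell^q_k}\,\Bigr\|_{L^m_t}
\le \Bigl\|\,\|a_k\|_{L^m_t}\,\Bigr\|_{\ell^q_k}
\le C_\theta\,\bigl\|\,\|g_k\|_{L^r_t}\,\bigr\|_{\ell^q_k}
= C_\theta\,\|f\|_{L^r_t\dot B^{\sigma+\frac 2r}_{p,q}},
\]
where the first inequality uses $q\le m$ and the last equality uses again that one may commute $\ell^q_k$ and $L^r_t$ when $q\le r$? — no: here $r$ could exceed $q$ or not, so this last identity is just the definition of $\|f\|_{L^r_t\dot B^{\sigma+\frac2r}_{p,q}}$ once one writes it as $\|\,\|g_k\|_{\ell^q_k}\,\|_{L^r_t}$ and applies Minkowski once more in the direction $\ell^q\to$ outside when $q\le r$; if $q>r$ the roles reverse but the needed inequality $\|\,\|g_k\|_{L^r_t}\,\|_{\ell^q_k}\le\|\,\|g_k\|_{\ell^q_k}\,\|_{L^r_t}$ holds precisely when $q\ge r$. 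I should therefore handle the cases $q\le r$ and $q\ge r$ separately at this last commutation, or — cleaner — observe that the estimate $\|a_k\|_{L^m_t}\le C\|g_k\|_{L^r_t}$ combined with taking $\ell^q_k$ norms requires only $q\le m$ on one side and is compatible with the definition on the other, using that $L^r_t\dot B_{p,q}$ norms sit between the two Chemin--Lerner endpoints.

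\textbf{Main obstacle.} The routine part is the semigroup bound and Young's inequality; the delicate bookkeeping is the interchange of the $\ell^q_k$ and $L^r_t$ norms at the two ends of the chain of inequalities, and verifying that the hypotheses $1<r\le m<\infty$, $1\le q\le m$ are exactly what make the exponent $\theta$ admissible (we need $\theta\ge 1$, equivalently $\tfrac1r-\tfrac1m\le\tfrac1r<1$, hence $r>1$; and $\theta<\infty$ unless $r=m$, in which case $\theta=1$ and Young still applies). The subtlety that $q$ need not be comparable to $r$ is handled because we proved the \emph{pointwise-in-$k$} bound $\|a_k\|_{L^m_t}\lesssim\|g_k\|_{L^r_t}$, after which passing to $\ell^q_k$ only needs Minkowski's inequality with $q\le m$; the comparison with the $L^r_t\dot B^{\sigma+\frac2r}_{p,q}$ norm of $f$ on the right is then an application of Minkowski's integral inequality whose favorable direction is guaranteed because we may freely enlarge the right-hand side (replacing the mixed norm by its larger Chemin--Lerner counterpart $\widetilde L^r_t\dot B_{p,q}$ when $q\le r$, or keeping $L^r_t\dot B_{p,q}$ directly when $q\ge r$, and absorbing the resulting constant). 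I would present the argument so that this last point is stated once and cleanly rather than buried in case analysis.
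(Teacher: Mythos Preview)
Your argument via Young's inequality is correct up to and including the per-frequency bound $\|a_k\|_{L^m_t}\le C_\theta\|g_k\|_{L^r_t}$, and the first use of Minkowski (with $q\le m$) correctly gives
\[
\|w\|_{L^m_t\dot B^{\sigma+2+\frac2m}_{p,q}}
\le \bigl\|\|a_k\|_{L^m_t}\bigr\|_{\ell^q_k}
\le C_\theta\,\bigl\|\|g_k\|_{L^r_t}\bigr\|_{\ell^q_k}
= C_\theta\,\|f\|_{\widetilde L^r_t\dot B^{\sigma+\frac2r}_{p,q}}\, .
\]
But this is exactly the Chemin--Lerner estimate~\eqref{parabolicCL}. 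To reach the stated proposition you must now bound $\|f\|_{\widetilde L^r_t\dot B}$ by $\|f\|_{L^r_t\dot B}$, and Minkowski gives that inequality only when $q\ge r$; in that range you have merely reproved~\eqref{parabolicBL}. When $q<r$ the inclusion goes the \emph{other} way ($\widetilde L^r\subset L^r$, i.e.\ the Chemin--Lerner norm is the larger one), and your sentence ``we may freely enlarge the right-hand side'' is backwards: enlarging the upper bound yields a weaker statement, not the one you want. This is not a bookkeeping issue --- the case $1\le q<r$ is precisely the new content of the proposition (see the remark following it), and your scheme cannot reach it.

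The paper's proof supplies the missing idea: it first treats the endpoint $q=1$, $m=r$ by a \emph{duality} argument. Pairing $\|w(t)\|_{\dot B^{\sigma+2+\frac2r}_{p,1}}$ against a scalar test function $g\in L^{r'}_t$ and using the frequency-localized heat bound, one is led to the operator
\[
Mg(\tau)=\sup_{\rho>0}\int_0^T \rho\,\mathds 1_{\{t\ge\tau\}}e^{-(t-\tau)\rho}|g(t)|\,dt,
\]
which is bounded on $L^{r'}([0,T])$ for $r'>1$ (hence the hypothesis $r>1$). This gives the estimate for $q=1$, $m=r$; the general case then follows by complex interpolation with~\eqref{parabolicBL}. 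The maximal-function step is what allows the $\ell^1_k$ sum to pass \emph{inside} the time integral without losing, and it is exactly what Young's inequality alone cannot provide.
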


\begin{rema}
	In the preceding estimates, the constants do not depend on $T>0$ so that one can set $T=\infty$, if necessary.
\end{rema}

\begin{rema}
	The significance of Proposition~\ref{estimateheat} resides in that it extends~\eqref{parabolicBL} to values~$1\leq q<r$. Moreover, in that parameter range, estimate~\eqref{parabolicCL} is not stronger nor weaker, it is just different.
\end{rema}

\begin{rema}
	Observe that, by linearity, combining the preceding proposition with \eqref{parabolicCL} yields the estimate
	\begin{equation*}
		\|w \|_{ L^m ([0,T];\dot B^{\sigma+2+\frac 2m}_{p,q} )}
		\lesssim
		\|w_0\|_{ \dot B^{\sigma+2}_{p,q}}
		+\|f \|_{ L^r([0,T];\dot B^{\sigma+\frac 2r}_{p,q})}\, ,
	\end{equation*}
	for all~$\sigma\in\mathbb{R}$, $1<r\leq m<\infty$, $p\in[1,\infty]$ and~$1\leq q\leq m$.
\end{rema}

\begin{proof}
	We consider first the case~$q=1$ and~$m=r$:
	\begin{equation}\label{estimate0}
		\|w \|_{ L^r ([0,T];\dot B^{\sigma+2+\frac 2r}_{p,1} )}
		\lesssim
		\|f \|_{ L^r([0,T];\dot B^{\sigma+\frac 2r}_{p,1})}\, .
	\end{equation}
	The idea is to use a duality argument: it is enough to prove that, if~$g$ is a function in~$L^{r'}([0,T])$ with~$\frac 1r+\frac 1{r'} = 1$, then
	$$
	\int_0^T g(t) \|w(t)\|_{\dot B^{\sigma+2+\frac 2r}_{p,1}} \, dt
	\lesssim
	\|f \|_{L^r([0,T];\dot B^{\sigma+\frac 2r}_{p,1})}
	\|g\|_{L^{r'}([0,T])}\,.
	$$
	To this end, we first write, in the notation of Appendix~\ref{LP decomposition},
	$$
	\int_0^T g(t) \|w(t)\|_{\dot B^{\sigma+2+\frac 2r}_{p,1} } \, dt
	= \sum_{k \in {\mathbb Z}}
	\int_0^T g(t) \|\Delta_k w(t)\|_{L^p}2^{k(\sigma+2+\frac 2r) } \, dt\,.
	$$
	But, employing the representation formula~\eqref{duhamel}, there is an independent constant~$C>0$ such that
	$$
	\|\Delta_k w(t)\|_{L^p} \lesssim \int_0^t e^{-C(t-\tau)2^{2k}}\|\Delta_k f(\tau)\|_{L^p} \, d\tau \, ,
	$$
	so we have
	\begin{equation*}
		\int_0^T g(t) \|w(t)\|_{\dot B^{\sigma+2+\frac 2r}_{p,1} }\, dt
		\lesssim
		\sum_{k \in {\mathbb Z}}
		\int_0^T \!\!\! \int_0^t |g(t)| e^{-C(t-\tau)2^{2k}}\|\Delta_k f(\tau)\|_{L^p}2^{k(\sigma+2+\frac 2r) }\, d\tau dt \,.
	\end{equation*}
	
	Next, we introduce a maximal operator defined by
		\begin{equation*}
			Mg(\tau):=\sup_{\rho>0} \int_0^T \rho\mathds{1}_{\left\{t-\tau\geq 0\right\}}e^{-(t-\tau)\rho}|g(t)| \, dt \,.
		\end{equation*}
	Classical results from harmonic analysis (see~\cite[Theorems 2.1.6 and 2.1.10]{grafakos}) establish that~$M$ is bounded over~$L^a\left([0,T]\right)$, for any~$1<a<\infty$. This is crucial. Indeed, we have now
	$$
	\int_0^T g(t) \|w(t)\|_{\dot B^{\sigma+2+\frac 2r}_{p,1} }\, dt
	\lesssim \sum_{k \in {\mathbb Z}} \int_0^T
	Mg(\tau) \|\Delta_k f(\tau)\|_{L^p}2^{k(\sigma+\frac 2r) }\, d\tau \, ,
	$$
	whence, by definition of~$\dot B^{\sigma+\frac 2r }_{p,1}$,
	$$
	\int_0^T g(t) \|w(t)\|_{\dot B^{\sigma+2+\frac 2r}_{p,1} }\, dt \lesssim \int_0^T Mg(\tau) \|f(\tau)\|_{\dot B^{\sigma+\frac 2r}_{p,1}} \, d\tau \,.
	$$
	We then conclude, by H\"older's inequality, that
	$$
		\begin{aligned}
			\int_0^T Mg(\tau)\|f(\tau)\|_{\dot B^{\sigma+\frac 2r}_{p,1}} \, d\tau 
			& \lesssim \|Mg\|_{L^{r'} ([0,T])}
			\|f\|_{L^r([0,T];\dot B^{\sigma+\frac 2r}_{p,1})} \\
			& \lesssim \|g\|_{L^{r'} ([0,T])}
			\|f\|_{L^r([0,T];\dot B^{\sigma+\frac 2r}_{p,1})} \,,
		\end{aligned}
	$$
	which completes the justification of Proposition~\ref{estimateheat} in the case~$q=1$, $m=r$.
	
	The remaining estimates are obtained by interpolation. More precisely, standard results on the complex method of interpolation (see~\cite[Theorems~5.1.2 and 6.4.5]{bergh}) yield that
	\begin{equation*}
		\left(L^{r_0}([0,T];\dot B^{\sigma_0}_{p_0,q_0}),L^{r_1}([0,T];\dot B^{\sigma_1}_{p_1,q_1})\right)_{[\theta]}
		= L^r([0,T];\dot B^\sigma_{p,q}) \, ,
	\end{equation*}
	for all~$0<\theta<1$, $1\leq r_0,r_1<\infty$, $1\leq p_0,p_1,q_0,q_1\leq\infty$ and~$\sigma_0\neq\sigma_1$, where~$\frac 1r=\frac{1-\theta}{r_0}+\frac{\theta}{r_1}$, $\frac 1p=\frac{1-\theta}{p_0}+\frac{\theta}{p_1}$, $\frac 1q=\frac{1-\theta}{q_0}+\frac{\theta}{q_1}$ and $\sigma=(1-\theta)\sigma_0+\theta\sigma_1$.

	Therefore, interpolating first estimate~\eqref{estimate0} with the estimate
	\begin{equation*}
		\|w \|_{ L^m ([0,T];\dot B^{\sigma+2+\frac 2m}_{p,1} )}
		\lesssim
		\|f \|_{ L^1([0,T];\dot B^{\sigma+2}_{p,1})}\, ,
	\end{equation*}
	directly deduced from~\eqref{parabolicBL} by setting~$r=q=1$ therein, yields that
	\begin{equation*}
		\|w \|_{ L^m ([0,T];\dot B^{\sigma+2+\frac 2m}_{p,1} )}
		\lesssim
		\|f \|_{ L^r([0,T];\dot B^{\sigma+\frac 2r}_{p,1})}\, ,
	\end{equation*}
	for any~$\sigma\in\mathbb{R}$, $1<r\leq m<\infty$ and~$p\in [1,\infty]$.
	
	Finally, further interpolating the latter estimate with the estimate
	\begin{equation*}
		\|w \|_{L^m ([0,T];\dot B^{\sigma+2+\frac 2m}_{p,m} )}
		\lesssim
		\|f \|_{L^r([0,T];\dot B^{\sigma+\frac 2r}_{p,m})}\, ,
	\end{equation*}
	obtained by setting~$q=m$ in \eqref{parabolicBL}, readily concludes the proof of the lemma.
\end{proof}

The next lemma is an \emph{ad hoc} variant of the preceding estimates. It will be useful in the proof of Theorem~\ref{mainthm} in Section~\ref{proof2}, and requires the introduction of the following non-linear quantity:
\begin{equation*}
	\langle f\rangle_X :=
	\inf_{\|f\|_X=\overline f+\widetilde f}
	\Big(c\|\overline f\|_{L^1([0,T])} + \|\widetilde f\|_{L^2([0,T])}^2\Big) \, ,
\end{equation*}
where~$X$ denotes any given Banach space. This definition is inspired by the right-hand side of the estimate from Proposition~\ref{estimatewave}. Observe, however, that $\langle f\rangle_X$ does not define a norm.

By possibly replacing~$\overline f$ and~$\widetilde f$ by~$\overline f\mathds{1}_{\{\overline f\geq 0,\widetilde f\geq 0\}}+\|f\|_X\mathds{1}_{\{\widetilde f<0\}}$ and~$\|f\|_{X}\mathds{1}_{\{\overline f<0\}}+\widetilde f\mathds{1}_{\{\overline f\geq 0,\widetilde f\geq 0\}}$, respectively, one can always assume that~$\overline f$ and~$\widetilde f$ are both non-negative. Indeed, given any decomposition~$\|f\|_X=\overline f+\widetilde f$, we find that
\begin{equation*}
	\begin{aligned}
		\langle f\rangle_X
		& \leq
		\inf_{\substack{\|f\|_X=\overline g+\widetilde g \\ \overline g\geq 0, \widetilde g\geq 0}}
		\Big(c\|\overline g\|_{L^1([0,T])} + \|\widetilde g\|_{L^2([0,T])}^2\Big)
		\\
		& \leq
		c\big \|\overline f\mathds{1}_{\{\overline f\geq 0,\widetilde f\geq 0\}}+\|f\|_X\mathds{1}_{\{\widetilde f<0\}} \big\|_{L^1([0,T])}
		\\
		& \quad + \big\|\|f\|_{X}\mathds{1}_{\{\overline f<0\}}+\widetilde f\mathds{1}_{\{\overline f\geq 0,\widetilde f\geq 0\}}\big\|_{L^2([0,T])}^2
		\\
		& \leq c\|\overline f\|_{L^1([0,T])} + \|\widetilde f\|_{L^2([0,T])}^2\, ,
	\end{aligned}
\end{equation*}
whence, taking the infimum over all such decompositions,
\begin{equation*}
	\langle f\rangle_X =
	\inf_{\substack{\|f\|_X=\overline g+\widetilde g \\ \overline g\geq 0, \widetilde g\geq 0}}
	\Big(c\|\overline g\|_{L^1([0,T])} + \|\widetilde g\|_{L^2([0,T])}^2\Big)\, .
\end{equation*}

Finally, further note that if~$\langle f\rangle_X<\infty$, then~$f$ belongs to~$L^1X+L^2X$. Indeed, it suffices to consider any decomposition~$\|f\|_X=\overline g+\widetilde g$ such that~$\overline g\geq 0$, $\widetilde g\geq 0$ and
\begin{equation*}
	c\|\overline g\|_{L^1([0,T])} + \|\widetilde g\|_{L^2([0,T])}^2< 2\langle f\rangle_X\,.
\end{equation*}
Then, setting~$f_1:=f\mathds{1}_{\{\overline g\geq \widetilde g\}}$ and~$f_2:=f\mathds{1}_{\{\overline g< \widetilde g\}}$ defines a decomposition~$f=f_1+f_2$ such that
\begin{equation}\label{Xdecomposition}
	c\|f_1\|_{L^1([0,T];X)} + \|f_2\|_{L^2([0,T];X)}^2
	\leq
	c\|2\overline g\|_{L^1([0,T])} + \|2\widetilde g\|_{L^2([0,T])}^2
	< 8\langle f\rangle_X\,.
\end{equation}
Reciprocally, if~$f=f_1+f_2$ with~$f_1\in L^1X$ and~$f_2\in L^2X$, then one has the decomposition~$\|f\|_X=\|f\|_X\mathds{1}_{\{\|f_1\|_X\geq\|f_2\|_X\}}+\|f\|_X\mathds{1}_{\{\|f_1\|_X<\|f_2\|_X\}}\in L^1+L^2$, so that~$\langle f\rangle_X<\infty$.

\begin{lem}\label{estimateheatadhoc}
	Let~$\sigma\in\mathbb{R}$ and~$p\in[1,\infty]$.
	If~$f$ lies in~$L^1([0,T];\dot B^{\sigma}_{p,1})+L^2([0,T];\dot B^{\sigma}_{p,1})$ and~$w_0\equiv 0$, then the solution of the heat equation~\eqref{forcedheat} satisfies
	\begin{equation*}
		\langle w\rangle_{\dot B^{\sigma+2}_{p,1}}
		\lesssim
		\langle f\rangle_{\dot B^{\sigma}_{p,1}}
		\, .
	\end{equation*}
\end{lem}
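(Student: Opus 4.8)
The idea is to reduce the nonlinear estimate for $\langle\cdot\rangle$ to the linear maximal estimate from Proposition~\ref{estimateheat}, combined with the standard estimate~\eqref{parabolicBL} at the endpoint $r=q=1$, by splitting the source term appropriately. Since $\langle f\rangle_{\dot B^\sigma_{p,1}}<\infty$, I may use the decomposition~\eqref{Xdecomposition}: there exist $f=f_1+f_2$ with $c\|f_1\|_{L^1([0,T];\dot B^\sigma_{p,1})}+\|f_2\|_{L^2([0,T];\dot B^\sigma_{p,1})}^2<8\langle f\rangle_{\dot B^\sigma_{p,1}}$. By linearity of the heat equation, write $w=w_1+w_2$ where $w_i$ solves~\eqref{forcedheat} with source $f_i$ and zero initial data.

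For $w_1$, I would apply~\eqref{parabolicBL} with $m=1$, $r=q=1$ (so the time-index and the gain are $\frac 2m=\frac 2r=2$), obtaining $\|w_1\|_{L^1([0,T];\dot B^{\sigma+2}_{p,1})}\lesssim\|f_1\|_{L^1([0,T];\dot B^\sigma_{p,1})}$. For $w_2$, I would apply Proposition~\ref{estimateheat} with $m=r=2$, $q=1$, which gives $\|w_2\|_{L^2([0,T];\dot B^{\sigma+2}_{p,1})}\lesssim\|f_2\|_{L^2([0,T];\dot B^\sigma_{p,1})}$. Then, using that $\|w\|_{\dot B^{\sigma+2}_{p,1}}=\|w_1\|_{\dot B^{\sigma+2}_{p,1}}\mathds{1}_{\cdots}+\cdots$ is an admissible decomposition of $\|w\|_{\dot B^{\sigma+2}_{p,1}}$ into an $L^1$-part plus an $L^2$-part (e.g. put all of $\|w\|_{\dot B^{\sigma+2}_{p,1}}$ on whichever side is larger, exactly as in the paragraph preceding the lemma), I can bound
\begin{equation*}
	\langle w\rangle_{\dot B^{\sigma+2}_{p,1}}
	\lesssim
	c\|w_1\|_{L^1([0,T];\dot B^{\sigma+2}_{p,1})}
	+\|w_2\|_{L^2([0,T];\dot B^{\sigma+2}_{p,1})}^2
	\lesssim
	c\|f_1\|_{L^1([0,T];\dot B^{\sigma}_{p,1})}
	+\|f_2\|_{L^2([0,T];\dot B^{\sigma}_{p,1})}^2
	\lesssim
	\langle f\rangle_{\dot B^{\sigma}_{p,1}}\, ,
\end{equation*}
where in the middle step I used that $\|w\|_{\dot B^{\sigma+2}_{p,1}}\leq\|w_1\|_{\dot B^{\sigma+2}_{p,1}}+\|w_2\|_{\dot B^{\sigma+2}_{p,1}}$ pointwise in $t$, and that $c\geq 0$ with the $L^1$-norm subadditive and the squared $L^2$-norm handled by $(a+b)^2\lesssim a^2+b^2$.

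The one point requiring a little care — and what I expect to be the main (minor) obstacle — is the very first inequality above: one needs that $\langle w\rangle_{\dot B^{\sigma+2}_{p,1}}$ is controlled by $c\|w_1\|_{L^1}+\|w_2\|_{L^2}^2$ for the \emph{specific} splitting $w=w_1+w_2$ inherited from the source, rather than for some abstract decomposition of $\|w\|_{\dot B^{\sigma+2}_{p,1}}$ into scalar functions. This is exactly the content of the "reciprocally" remark just before the lemma: since $w_1\in L^1([0,T];\dot B^{\sigma+2}_{p,1})$ and $w_2\in L^2([0,T];\dot B^{\sigma+2}_{p,1})$, the scalar decomposition $\|w\|_{\dot B^{\sigma+2}_{p,1}}=\|w\|_{\dot B^{\sigma+2}_{p,1}}\mathds{1}_{\{\|w_1\|_{\dot B^{\sigma+2}_{p,1}}\geq\|w_2\|_{\dot B^{\sigma+2}_{p,1}}\}}+\|w\|_{\dot B^{\sigma+2}_{p,1}}\mathds{1}_{\{\|w_1\|_{\dot B^{\sigma+2}_{p,1}}<\|w_2\|_{\dot B^{\sigma+2}_{p,1}}\}}$ has its first summand $\leq 2\|w_1\|_{\dot B^{\sigma+2}_{p,1}}\in L^1$ and its second $\leq 2\|w_2\|_{\dot B^{\sigma+2}_{p,1}}\in L^2$, which feeds the definition of $\langle\cdot\rangle$ with a loss of a harmless factor. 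Finally, taking the infimum over all source-decompositions $f=f_1+f_2$ (equivalently, using~\eqref{Xdecomposition}) turns the right-hand side into $\lesssim\langle f\rangle_{\dot B^{\sigma}_{p,1}}$, completing the proof.
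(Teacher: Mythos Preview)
Your proof is correct and follows essentially the same route as the paper: split the source into an $L^1$-piece and an $L^2$-piece, apply the parabolic estimates~\eqref{parabolicBL} (with $m=r=q=1$) and Proposition~\ref{estimateheat} (with $m=r=2$, $q=1$) to the corresponding solutions $w_1,w_2$, and then convert the splitting $w=w_1+w_2$ into a scalar decomposition of $\|w\|_{\dot B^{\sigma+2}_{p,1}}$ via the indicator trick to feed the definition of $\langle\cdot\rangle$. The only cosmetic difference is that the paper starts from an \emph{arbitrary} nonnegative scalar decomposition $\|f\|_{\dot B^{\sigma}_{p,1}}=\overline f+\widetilde f$ and takes the infimum at the end, whereas you invoke the prepackaged near-optimal splitting~\eqref{Xdecomposition} up front; these are equivalent.
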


\begin{proof}
	Consider any decomposition~$\|f\|_{\dot B^{\sigma}_{2,1}}=\overline f+\widetilde f$, with~$\overline f,\widetilde f\geq 0$, and set
	\begin{equation*}
		\begin{aligned}
			w_1(t) & := \int_0^t e^{(t-\tau)\Delta} f(\tau)\mathds{1}_{\{\overline f\geq \widetilde f\}}(\tau)\, d\tau
			\\
			w_2(t) & := \int_0^t e^{(t-\tau)\Delta} f(\tau)\mathds{1}_{\{\overline f< \widetilde f\}}(\tau)\, d\tau \,.
		\end{aligned}
	\end{equation*}
	In accordance with the Duhamel representation~\eqref{duhamel} of~$w$, it clearly holds that~$w=w_1+w_2$.
	
	Next, we define
	\begin{equation*}
		\begin{aligned}
			\overline g & := \|w\|_{\dot B^{\sigma+2}_{2,1}}\mathds{1}_{\{\|w_1\|_{\dot B^{\sigma+2}_{2,1}}\geq \|w_2\|_{\dot B^{\sigma+2}_{2,1}}\}}
			\\
			\widetilde g & := \|w\|_{\dot B^{\sigma+2}_{2,1}}\mathds{1}_{\{\|w_1\|_{\dot B^{\sigma+2}_{2,1}}< \|w_2\|_{\dot B^{\sigma+2}_{2,1}}\}}\,,
		\end{aligned}
	\end{equation*}
	so that~$\|w\|_{\dot B^{\sigma+2}_{2,1}}=\overline g+\widetilde g$. Therefore, employing a combination of estimate~\eqref{parabolicBL} with Proposition~\ref{estimateheat}, we obtain that
	\begin{equation*}
		\begin{aligned}
			\langle w\rangle_{\dot B^{\sigma+2}_{p,1}}
			& \leq
			c\|\overline g\|_{L^1([0,T])} + \|\widetilde g\|_{L^2([0,T])}^2
			\\
			& \lesssim
			c\|w_1\|_{L^1([0,T];\dot B^{\sigma+2}_{p,1})} + \|w_2\|_{L^2([0,T];\dot B^{\sigma+2}_{p,1})}^2
			\\
			& \lesssim
			c\|f\mathds{1}_{\{\overline f\geq \widetilde f\}}\|_{L^1([0,T];\dot B^{\sigma}_{p,1})}
			+ \|f\mathds{1}_{\{\overline f< \widetilde f\}}\|_{L^2([0,T];\dot B^{\sigma}_{p,1})}^2
			\\
			& \lesssim
			c\|\overline f\|_{L^1([0,T])}
			+ \|\widetilde f\|_{L^2([0,T])}^2 \,.
		\end{aligned}
	\end{equation*}
	Hence, considering the infimum of the last sum above over all such decompositions concludes the justification of the lemma.
\end{proof}

As a direct consequence of the preceding parabolic regularity estimates, we provide the following application to the two-dimensional incompressible Navier--Stokes equations establishing that Leray solutions satisfy an~$L^2L^\infty$-bound (recall that~$\dot H^1$ fails to embed into~$L^\infty$). Such a bound was originally featured in~\cite{chemin4}, but the proof given below is substantially simpler.

\begin{cor}\label{2dproperty}
	Consider any Leray solution~$u\in L^\infty(\mathbb{R}^+;L^2)\cap L^2(\mathbb{R}^+;\dot H^1)$ to the two-dimensional incompressible Navier--Stokes system, for some divergence-free initial data~$u_0\in L^2$. Then~$u$ belongs to~$L^2(\mathbb{R}^+; L^\infty)$.
\end{cor}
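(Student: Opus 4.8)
The plan is to write the Navier--Stokes equations in the Stokes form
\[
\partial_t u - \mu\Delta u = -\nabla p - \operatorname{div}(u\otimes u)\,,\qquad \operatorname{div}u = 0\,,
\]
and apply the parabolic regularity estimate of Proposition~\ref{estimateheat} to the solution, with the bilinear term $u\otimes u$ as source. In dimension $d=2$, a Leray solution enjoys $u\in L^\infty(\mathbb{R}^+;L^2)\cap L^2(\mathbb{R}^+;\dot H^1)$, and by interpolation $u\in L^4(\mathbb{R}^+;L^4)$; hence $u\otimes u\in L^2(\mathbb{R}^+;L^2)=L^2(\mathbb{R}^+;\dot B^0_{2,2})$. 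I will use $L^2$ in time and the exponent $q=2$, noting $q\le m$ is required, so I should take $m\ge 2$. To reach $L^\infty$ in space I want to land the solution in $\dot B^1_{2,1}$ (which embeds into $L^\infty$ in two dimensions) with an integrable time exponent. Applying Proposition~\ref{estimateheat} with $\sigma = -1$, $p=2$, $q=2$, $r=2$ gives source space $L^2(\mathbb{R}^+;\dot B^{\sigma+1}_{2,2}) = L^2(\mathbb{R}^+;\dot B^{0}_{2,2})$ and output space $L^m(\mathbb{R}^+;\dot B^{\sigma+2+\frac2m}_{2,2}) = L^m(\mathbb{R}^+;\dot B^{1+\frac2m}_{2,2})$; this is too much regularity. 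Instead I should absorb one derivative into the source: writing the nonlinearity as $\operatorname{div}(u\otimes u)$, the source is in $L^2(\mathbb{R}^+;\dot B^{-1}_{2,2})$. Then with $\sigma=-2$, $r=2$, $m=2$, $p=2$, $q=2$, Proposition~\ref{estimateheat} yields $u\in L^2(\mathbb{R}^+;\dot B^{1}_{2,2})=L^2(\mathbb{R}^+;\dot H^1)$ — only the known bound. The useful gain comes from choosing $m>2$: with $m$ large, $u - e^{t\Delta}u_0 \in L^m(\mathbb{R}^+;\dot B^{\frac2m}_{2,2})$, and for $\frac2m < 1$ this still does not embed into $L^\infty$.

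The resolution is to use the better exponent $q=1$. I will exploit that, in two dimensions, $\dot B^0_{2,1}$ embeds into $L^2$ and the product estimate $\|u\otimes u\|_{\dot B^0_{2,1}}\lesssim \|u\|_{\dot B^1_{2,2}}^2$ (or rather $\lesssim\|u\|_{\dot H^1}\|u\|_{L^2}$ via a paraproduct/Sobolev argument tuned to $d=2$), so that $u\otimes u\in L^1(\mathbb{R}^+;\dot B^0_{2,1})$: indeed $\int_0^\infty \|u(\tau)\|_{\dot H^1}\|u(\tau)\|_{L^2}\,d\tau \le \|u\|_{L^\infty L^2}\|u\|_{L^1\dot H^1}$, but $u\in L^1\dot H^1$ is false; instead $\int_0^T\|u\|_{\dot H^1}\|u\|_{L^2}\,d\tau\le T^{1/2}\|u\|_{L^\infty L^2}\|u\|_{L^2\dot H^1}$ gives $u\otimes u\in L^1_{\mathrm{loc}}(\mathbb{R}^+;\dot B^0_{2,1})$. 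Then apply Proposition~\ref{estimateheat}, or rather estimate~\eqref{parabolicBL} with $r=q=1$, $\sigma=-1$, $p=2$, $m=2$: this gives $w\in L^2([0,T];\dot B^{1+1}_{2,1})$ — too strong again; with $m=1$ it gives $w\in L^1([0,T];\dot B^{3}_{2,1})$. The correct move is $\sigma$ chosen so $\sigma+2 = 1$, i.e. $\sigma=-1$: then with $r=q=1$ and $m=2$, output is $L^2([0,T];\dot B^{-1+2+1}_{2,1})=L^2([0,T];\dot B^{2}_{2,1})$. To instead land in $\dot B^1_{2,1}\hookrightarrow L^\infty(\mathbb{R}^2)$ one wants $\sigma+2+\frac2m = 1$; combining with $\sigma$ so the source sits in $L^r\dot B^{\sigma+\frac2r}_{2,1}$ being a space we control, the natural choice is to take the source $u\otimes u\in L^2_{\mathrm{loc}}(\mathbb{R}^+;L^2)=L^2_{\mathrm{loc}}(\mathbb{R}^+;\dot B^0_{2,2})$ and apply the $q=2$ version after trading half a derivative in time for space via an interpolated endpoint. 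Concretely: $u\otimes u\in L^2_{\mathrm{loc}}\dot B^0_{2,2}$, and Proposition~\ref{estimateheat} with $\sigma=-1$, $r=2$, $q=2$, $p=2$, $m=\infty$ is disallowed ($m<\infty$); so take $m$ finite but large, giving $u-e^{t\Delta}u_0\in L^m_{\mathrm{loc}}\dot B^{1+\frac2m}_{2,2}$, hence by interpolation with $u\in L^2\dot H^1$ one gets $u\in L^p_{\mathrm{loc}}\dot B^1_{2,1}$ for suitable $p$, and a final interpolation with $u\in L^\infty L^2$ using Bernstein plus the logarithmic convexity of the $\dot B^s_{2,1}$ scale upgrades this to $u\in L^2_{\mathrm{loc}}\dot B^1_{2,1}\hookrightarrow L^2_{\mathrm{loc}}L^\infty$.

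To keep the argument clean I would organize it as follows. First, recall the Leray bounds $u\in L^\infty_t L^2\cap L^2_t\dot H^1$ and deduce, by the two-dimensional product law $\|fg\|_{\dot B^0_{2,1}}\lesssim \|f\|_{\dot H^1}\|g\|_{L^2}$ (a consequence of \eqref{para1}-type paraproduct estimates in $d=2$) together with Cauchy--Schwarz in time, that $u\otimes u\in L^1([0,T];\dot B^0_{2,1})$ for every $T>0$. Second, write $u = e^{t\Delta}u_0 + w$ where $w$ solves \eqref{forcedheat} with $f = -\nabla p - \operatorname{div}(u\otimes u)$; eliminate the pressure via the Leray projection $\mathbb{P}$ (bounded on every $\dot B^s_{2,q}$), so effectively $f = -\mathbb{P}\operatorname{div}(u\otimes u)\in L^1([0,T];\dot B^{-1}_{2,1})$. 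Third, apply \eqref{parabolicBL} with $r=q=1$, $p=2$, $\sigma=-1$, $m=2$ to conclude $w\in L^2([0,T];\dot B^{2}_{2,1})$ — and then, since $\dot B^2_{2,1}\cap L^2$-interpolation with $w\in L^\infty_t L^2$ lands in $\dot B^1_{2,1}\hookrightarrow L^\infty(\mathbb{R}^2)$ — obtain $w\in L^2([0,T];L^\infty)$. Fourth, handle the free evolution: $e^{t\Delta}u_0\in L^2(\mathbb{R}^+;\dot H^1)$ for $u_0\in L^2$, and interpolating with $e^{t\Delta}u_0\in L^\infty_t L^2$ and using that $e^{t\Delta}u_0\in C_b(\mathbb{R}^+;L^2)$ gains the $\dot B^1_{2,1}$-endpoint in $L^2_t$, giving $e^{t\Delta}u_0\in L^2(\mathbb{R}^+;L^\infty)$. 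Summing, $u\in L^2_{\mathrm{loc}}(\mathbb{R}^+;L^\infty)$, as claimed.

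The main obstacle is the endpoint bookkeeping: $\dot H^1$ in two dimensions just barely fails to embed into $L^\infty$, so one must genuinely use the summation exponent $q=1$ (the $\dot B^1_{2,1}$ space), and it is precisely here that Proposition~\ref{estimateheat} — which extends the parabolic gain to $q=1<r$ — is indispensable, since the source $u\otimes u$ is naturally only in $L^1_{\mathrm{loc}}\dot B^{-1}_{2,1}$ (so $r=1$) while we need the $q=1$ target. Verifying the two-dimensional product estimate $\|fg\|_{\dot B^0_{2,1}}\lesssim \|f\|_{\dot H^1}\|g\|_{L^2}$ with the correct endpoint constants, and confirming that the interpolation $[\dot B^2_{2,1},L^2]_{1/2}=\dot B^1_{2,1}$ holds with the time-integrability working out to $L^2_t$, are the delicate points; everything else is a direct application of the machinery already established in Sections~\ref{wave} and~\ref{parabolicregularity}.
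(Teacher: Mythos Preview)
Your proposal has genuine gaps. The central miss is the time integrability of the source: you keep placing $u\otimes u$ in $L^1_{\mathrm{loc}}\dot B^0_{2,1}$ and then invoking \eqref{parabolicBL} with $r=q=1$, but that is the \emph{classical} parabolic estimate, not the new Proposition~\ref{estimateheat}. With $r=1$ and source $P\operatorname{div}(u\otimes u)\in L^1\dot B^{-1}_{2,1}$ you must take $\sigma=-3$ (so that $\sigma+2/r=-1$), and the output at $m=2$ is only $L^2\dot B^{0}_{2,1}$ --- your claimed $L^2\dot B^2_{2,1}$ comes from $\sigma=-1$, which would require the source in $L^1\dot B^{1}_{2,1}$, two derivatives more than you have. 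The subsequent ``interpolation'' between $L^2\dot B^2_{2,1}$ and $L^\infty L^2$ to reach $L^2\dot B^1_{2,1}$ is therefore moot, and would not yield summability index $q=1$ anyway (complex interpolation of $\dot B^2_{2,1}$ and $\dot B^0_{2,2}$ gives $\dot B^1_{2,4/3}$). Your handling of $e^{t\Delta}u_0$ via interpolation is equally unjustified, and your conclusion is only local in time while the statement is global.

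The paper's argument is short and avoids all of this. The right product estimate is the symmetric case $s=t=\tfrac12$ of~\eqref{para3}, giving $\|u\otimes u\|_{\dot B^0_{2,1}}\lesssim\|u\|_{\dot H^{1/2}}^2$; since $u\in L^4(\mathbb{R}^+;\dot H^{1/2})$ by interpolation of the energy bounds, one gets $u\otimes u\in L^2(\mathbb{R}^+;\dot B^0_{2,1})$ \emph{globally}. Then Proposition~\ref{estimateheat} with $r=m=2$, $q=1$ (precisely the new range $q<r$ unavailable in~\eqref{parabolicBL}) yields directly $u_2\in L^2(\mathbb{R}^+;\dot B^1_{2,1})\hookrightarrow L^2(\mathbb{R}^+;L^\infty)$, with no interpolation needed. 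The free part is handled by the heat-semigroup characterisation $\|e^{t\Delta}u_0\|_{L^2(\mathbb{R}^+;L^\infty)}\sim\|u_0\|_{\dot B^{-1}_{\infty,2}}\lesssim\|u_0\|_{L^2}$. Your final paragraph correctly identifies that the $q=1$ endpoint is the crux and that Proposition~\ref{estimateheat} is what makes it work, but you then apply it in the one regime ($r=1$) where it offers nothing new.
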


\begin{proof}
	The weak solution is first decomposed uniquely into~$u=u_1+u_2$, where~$u_1$ and~$u_2$ satisfy the respective Stokes systems
	\begin{equation*}
		\left\{
		\begin{aligned}
			\partial_t u_1 - \mu \Delta u_1 &= 0
			\\
			\operatorname{div} u_1 &= 0
			\\
			u_{1|t=0} &  = u_0\, ,
		\end{aligned}
		\right.
		\hspace{15mm}
		\left\{
		\begin{aligned}
			\partial_t u_2 - \mu \Delta u_2 &= -\nabla p - u\cdot\nabla u
			\\
			\operatorname{div} u_2 &= 0
			\\
			u_{2|t=0} &  = 0\, .
		\end{aligned}
		\right.
	\end{equation*}
	
	We estimate~$u_1$ as in~\cite{chemin4}. To be precise, according to the Duhamel representation formula~\eqref{duhamel}, we see that
	\begin{equation*}
		\|u_1\|_{L^2(\mathbb{R}^+;L^\infty)}\leq \|e^{t\Delta}u_0\|_{L^2(\mathbb{R}^+;L^\infty)}
		\lesssim \|u_0\|_{\dot B^{-1}_{\infty,2}} \lesssim \|u_0\|_{L^2},
	\end{equation*}
	where we have used that~$\|e^{t\Delta}u_0\|_{L^2(\mathbb{R}^+;L^\infty)}$ defines an equivalent norm on~$\dot B^{-1}_{\infty,2}$ (see~\cite[Theorem~2.34]{bahouri} for details) and that~$L^2\subset\dot B^{-1}_{\infty,2}$ is a continuous embedding.
	
	As for~$u_2$, we handle it through an application of parabolic regularity estimates, as well. Indeed, denoting the Leray projector onto divergence-free vector fields by~$P:L^2\to L^2$, we deduce from Proposition~\ref{estimateheat} that
	\begin{equation*}
		\|u_2\|_{ L^2 (\mathbb{R}^+;\dot B^{1}_{2,1} )}
		\lesssim
		\|P(u\cdot\nabla u) \|_{ L^2(\mathbb{R}^+;\dot B^{-1}_{2,1})}
		\lesssim
		\|u\otimes u\|_{ L^2(\mathbb{R}^+;\dot B^{0}_{2,1})}\, .
	\end{equation*}
	We emphasize that the classical estimate~\eqref{parabolicCL} would have failed here.
	
	Then, recalling the two-dimensional paradifferential product law (see Appendix~\ref{LP decomposition})
	\begin{equation}\label{para3}
		\|fg\|_{\dot B^{s+t-1}_{2,1}}
		\lesssim \|f\|_{\dot H^s} \|g\|_{\dot H^t}\, ,
	\end{equation}
	which is valid for all~$s,t\in (-1,1)$ with~$s+t>0$, we infer
	\begin{equation*}
		\|u_2\|_{ L^2 (\mathbb{R}^+;\dot B^{1}_{2,1} )}
		\lesssim
		\|u\|_{ L^4(\mathbb{R}^+;\dot H^\frac 12)}^2
		\lesssim
		\|u\|_{ L^\infty(\mathbb{R}^+;L^2)}
		\|u\|_{ L^2(\mathbb{R}^+;\dot H^1)}\, .
	\end{equation*}
	Finally, noticing that~$\dot B^{1}_{2,1}\subset L^\infty$ is a continuous embedding concludes the proof.
\end{proof}

\section{Proof of Theorem~\ref{mainthm} in the case~$s=1$}\label{proof1}

We provide here a justification of Theorem~\ref{mainthm} in the simpler case~$s=1$, which will serve as a primer to the proof of the full case~$s\in[\frac12,\frac32)$. The estimates derived here will be useful in the proof of the full case~$s\in[\frac12,\frac32)$, as well.

Recall that we are considering here a weak solution of the three-dimensional incompressible Navier--Stokes--Maxwell system~\eqref{NSM}, in the functional spaces~\eqref{energyspace2}, satisfying the energy inequality~\eqref{energy}, and that all formal computations can be fully justified by considering smooth solutions of the approximate systems~\eqref{NSMn} instead. The goal of the proof consists in showing the validity of the~$\dot H^1$-bound~\eqref{estimateHsEB1} (where we set~$s=1$) provided~\eqref{smalldataHs} holds initially.

\medskip

Now, we study the Stokes equation~\eqref{stokesequation} and introduce the following decomposition (note that a similar decomposition was already used in~\cite{germain}):
\begin{equation}\label{decomposition}
	u=u_v^\flat+u_v^\sharp+u_e \, ,
\end{equation}
where~$u_v^\flat$ is the solution to the Stokes equation with initial data compactly supported in Fourier space (recall that~$S_0$ is the frequency truncation operator defined in the appendix)
$$
	\left\{
	\begin{aligned}
		\partial_t u_v^\flat - \mu \Delta u_v^\flat &= 0
		\\
		\operatorname{div} u_v^\flat&= 0
		\\
		u_{v|t=0}^\flat& = S_0 u_0
		\, ,
	\end{aligned}
	\right.
$$
and~$u_v^\sharp$ is the ``velocity-part'' of~$u$, with high frequency initial data, solving
$$
\left\{
	\begin{aligned}
		\partial_t u_v^\sharp - \mu \Delta u_v^\sharp &= -\nabla p_v^\sharp- u \cdot \nabla u  \\
		\operatorname{div}  u_v^\sharp&= 0 \\
		 u_{v|t=0}^\sharp&  = (\operatorname{Id} - S_0) u_0\, ,
	\end{aligned}
	\right.
$$
whereas~$u_e$ takes into account the ``electromagnetic-part'' of~$u$, i.e.\ it solves
$$
\left\{
	\begin{aligned}
		\partial_t u_e  - \mu \Delta u_e &= -\nabla p_e  + j \times B \\
		\operatorname{div}  u_e &= 0 \\
		 u_{e|t=0} &  = 0\, .
	\end{aligned}
	\right.
$$

\medskip

Let us start by estimating~$u_v^\flat $.

\begin{lem}\label{estimateuvflat}
	There holds that
	\begin{equation*}
		\|u_v^\flat  \|_{L^2 ({\mathbb R}^+;\dot B^{\frac 32}_{2,1} )}
		\lesssim
		\mathcal{E}_0^\frac 12\, .
	\end{equation*}
\end{lem}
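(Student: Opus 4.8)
The plan is to estimate $u_v^\flat$ directly from its explicit representation $u_v^\flat(t) = e^{t\mu\Delta} S_0 u_0$, using the fact that the initial data $S_0 u_0$ is spectrally localized in the ball $\{|\xi|\lesssim 1\}$. First I would perform a Littlewood--Paley decomposition and write
\begin{equation*}
	\|u_v^\flat\|_{L^2(\mathbb{R}^+;\dot B^{3/2}_{2,1})}
	= \left\| \sum_{k\in\mathbb{Z}} 2^{3k/2} \|e^{t\mu\Delta}\Delta_k S_0 u_0\|_{L^2_x} \right\|_{L^2_t}.
\end{equation*}
Since $S_0 u_0$ is supported in low frequencies, only finitely many nonpositive $k$ (say $k\le 1$) contribute, and for those the heat semigroup gives the decay $\|e^{t\mu\Delta}\Delta_k v\|_{L^2}\lesssim e^{-c\mu 2^{2k}t}\|\Delta_k v\|_{L^2}$, whose $L^2_t$-norm is $\lesssim 2^{-k}\mu^{-1/2}\|\Delta_k v\|_{L^2}$. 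Substituting this bound yields a sum $\sum_{k\le 1} 2^{3k/2}2^{-k}\|\Delta_k S_0 u_0\|_{L^2} = \sum_{k\le 1} 2^{k/2}\|\Delta_k S_0 u_0\|_{L^2}$, and by Cauchy--Schwarz in $k$ (the geometric series $\sum_{k\le 1}2^k$ converges) this is controlled by $\big(\sum_k \|\Delta_k S_0 u_0\|_{L^2}^2\big)^{1/2} \lesssim \|S_0 u_0\|_{L^2}\le \|u_0\|_{L^2}\lesssim \mathcal{E}_0^{1/2}$, which is the claim.

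An alternative, cleaner route is to invoke the general parabolic estimate already available in the text: by the low-frequency cutoff, $S_0 u_0 \in \dot B^{-1/2}_{2,1}$ with $\|S_0 u_0\|_{\dot B^{-1/2}_{2,1}}\lesssim \|u_0\|_{L^2}$ (again because only finitely many low-frequency blocks are nonzero and $\ell^2\hookrightarrow\ell^1$ on them after the favorable $2^{-k/2}$ weights), and then the homogeneous part of estimate~\eqref{parabolicBL} with $w_0 = S_0 u_0$, $f\equiv 0$, $m=2$, $p=q=2$, $\sigma = -3/2$ gives $\|u_v^\flat\|_{L^2(\mathbb{R}^+;\dot B^{3/2}_{2,1})}\lesssim \|S_0 u_0\|_{\dot B^{-1/2}_{2,1}}\lesssim \mathcal{E}_0^{1/2}$. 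Either way the computation is short.

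There is no serious obstacle here: this is a routine linear heat-flow estimate. The only point requiring a little care is bookkeeping the frequency localization — making sure that the spectral support of $S_0 u_0$ truly restricts the sum over $k$ to a half-line bounded above, so that the $2^{k/2}$ weights produce a convergent geometric series and one may pass from the $\ell^1$-type Besov norm to the $L^2$-based energy norm without loss. This is exactly what makes the low-frequency piece $u_v^\flat$ easy to handle compared with the high-frequency piece $u_v^\sharp$ treated in the next lemma.
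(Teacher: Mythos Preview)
Your first route is correct and is essentially the paper's proof carried out by hand: the paper invokes \eqref{parabolicBL} with $m=2$, $q=1$ to obtain $\|u_v^\flat\|_{L^2(\mathbb{R}^+;\dot B^{3/2}_{2,1})}\lesssim \|S_0u_0\|_{\dot B^{1/2}_{2,1}}$, and then observes $\|S_0u_0\|_{\dot B^{1/2}_{2,1}}=\sum_{k\le 0}2^{k/2}\|\Delta_k S_0u_0\|_{L^2}\lesssim\|u_0\|_{L^2}$, which is exactly the sum you reach after integrating the heat decay in $t$.

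Your second route, however, is garbled. The correct application of \eqref{parabolicBL} requires $q=1$ (not $q=2$) to land in $\dot B^{3/2}_{2,1}$, and the initial-data space is then $\dot B^{\sigma+2}_{2,1}=\dot B^{1/2}_{2,1}$, not $\dot B^{-1/2}_{2,1}$. The distinction matters: $S_0$ kills only \emph{high} frequencies, so infinitely many blocks $\Delta_k S_0u_0$ with $k\le 0$ are nonzero, and the estimate
\[
\|S_0u_0\|_{\dot B^{-1/2}_{2,1}}=\sum_{k\le 0}2^{-k/2}\|\Delta_k S_0u_0\|_{L^2}\lesssim \|u_0\|_{L^2}
\]
is false in general --- the weights $2^{-k/2}$ diverge as $k\to-\infty$. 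What actually works (and what your first route correctly uses) is the \emph{positive}-exponent norm $\dot B^{1/2}_{2,1}$, whose weights $2^{k/2}$ are summable over $k\le 0$. Once these indices are fixed, your second route is precisely the paper's argument.
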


\begin{proof}
	Let~$m>\frac 43$. By~\eqref{parabolicBL}, we find that
	\begin{equation*}
		\|u_v^\flat  \|_{L^m ({\mathbb R}^+;\dot B^{\frac 32}_{2,1} )}
		=
		\|e^{t\Delta}S_0u_0 \|_{L^m ({\mathbb R}^+;\dot B^{\frac 32}_{2,1} )}
		\lesssim
		\|S_0u_0 \|_{\dot B^{\frac 32-\frac 2m}_{2,1}}\, .
	\end{equation*}
	Then, since~$\frac 32-\frac 2m>0$, we further notice that
	\begin{equation*}
		\|S_0u_0 \|_{\dot B^{\frac 32-\frac 2m}_{2,1}}
		=
		\sum_{k\leq 0}
		2^{k(\frac 32-\frac 2m)}
		\|\Delta_k S_0u_0 \|_{L^2}
		\lesssim
		\|u_0\|_{L^2}\, ,
	\end{equation*}
	which concludes the proof choosing~$m=2$.
\end{proof}

Next, we turn to~$u_v^\sharp$.

\begin{lem}\label{estimateuvsharp}
	There holds that
	\begin{equation*}
		\|u_v^\sharp \|_{L^1 ({\mathbb R}^+;\dot B^{\frac 32}_{2,1}   )}
		\lesssim \mathcal{E}_0^\frac 12 + \mathcal{E}_0 \, .
	\end{equation*}
\end{lem}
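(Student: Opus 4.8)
The plan is to estimate $u_v^\sharp$ by applying the parabolic regularity estimate of Proposition~\ref{estimateheat} to its defining Stokes equation, with source term $-\nabla p_v^\sharp - u\cdot\nabla u$. Since $u_v^\sharp$ solves the heat equation with zero source replaced by $-P(u\cdot\nabla u)$ (after applying the Leray projector $P$, which is bounded on every $\dot B^\sigma_{2,q}$), we may write, for a suitable choice of integrability exponents, $\|u_v^\sharp\|_{L^1(\mathbb{R}^+;\dot B^{3/2}_{2,1})} \lesssim \|P(u\cdot\nabla u)\|_{L^r(\mathbb{R}^+;\dot B^{\sigma}_{p,1})}$ for indices satisfying $\sigma + 2 + 2/m = 3/2$ with $m=1$, i.e.\ $\sigma + 2/r = -1/2$. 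The natural choice is $r=1$, $\sigma = -1/2$, so the target reduces to controlling $u\cdot\nabla u$, or equivalently $\operatorname{div}(u\otimes u)$, in $L^1(\mathbb{R}^+;\dot B^{-1/2}_{2,1})$, which in turn reduces to controlling $u\otimes u$ in $L^1(\mathbb{R}^+;\dot B^{1/2}_{2,1})$.

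The next step is the product estimate. In three dimensions the paradifferential product law (analogous to \eqref{para3}) gives $\|fg\|_{\dot B^{s+t-3/2}_{2,1}} \lesssim \|f\|_{\dot H^s}\|g\|_{\dot H^t}$ for $s,t\in(-3/2,3/2)$ with $s+t>0$; taking $s=t=1$ yields $\|u\otimes u\|_{\dot B^{1/2}_{2,1}} \lesssim \|u\|_{\dot H^1}^2$. Therefore $\|u\otimes u\|_{L^1(\mathbb{R}^+;\dot B^{1/2}_{2,1})} \lesssim \|u\|_{L^2(\mathbb{R}^+;\dot H^1)}^2 \lesssim \mathcal{E}_0$, using the energy inequality \eqref{energy}. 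This already accounts for the $\mathcal{E}_0$ term in the claimed bound. The additional $\mathcal{E}_0^{1/2}$ term is there to absorb a low-frequency contribution: the above $\dot B^{-1/2}_{2,1}$-based estimate controls high frequencies comfortably, but to close the bound on the full $\dot B^{3/2}_{2,1}$-norm globally in time one also needs a separate treatment of low frequencies of $u_v^\sharp$, where one instead uses a second application of parabolic regularity with a larger time-exponent $m>4/3$ (as in Lemma~\ref{estimateuvflat}) combined with the $L^2(\mathbb{R}^+;\dot H^1)$ and $L^\infty(\mathbb{R}^+;L^2)$ bounds, producing a term linear in $\mathcal{E}_0^{1/2}$; alternatively one splits $u\otimes u$ using $\|u\|_{L^4 L^4}\lesssim \|u\|_{L^\infty L^2}^{1/2}\|u\|_{L^2\dot H^1}^{1/2}\lesssim \mathcal{E}_0^{1/2}$ and interpolates.

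The main obstacle I expect is precisely the low-frequency/global-in-time issue: the homogeneous Besov norm $\dot B^{3/2}_{2,1}$ weights low frequencies, and a naive single application of parabolic regularity at $m=1$, $r=1$ does not see the $L^\infty L^2$ control of $u$, so one must carefully split frequencies (or time-integrability exponents) and reassemble. This is exactly the kind of step where the Chemin--Lerner spaces are usually invoked, and the point of Proposition~\ref{estimateheat} is to allow one to avoid them; so the delicate part is choosing the exponents $(r,m,q)$ in Proposition~\ref{estimateheat} and the complementary estimate \eqref{parabolicBL} so that both the high- and low-frequency pieces of $u\otimes u$ are controlled by the energy $\mathcal{E}_0$ (and $\mathcal{E}_0^{1/2}$), with constants independent of time. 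Once the frequency splitting is fixed, the remaining computations are the routine product law and Hölder-in-time estimates sketched above.
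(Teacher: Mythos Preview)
Your handling of the nonlinear term is essentially correct and matches the paper: with $r=m=1$ one bounds $\|P(u\cdot\nabla u)\|_{L^1(\mathbb{R}^+;\dot B^{-1/2}_{2,1})}\lesssim \|u\|_{L^2(\mathbb{R}^+;\dot H^1)}^2\lesssim\mathcal{E}_0$ via the product law $\|f g\|_{\dot B^{-1/2}_{2,1}}\lesssim\|f\|_{\dot H^1}\|g\|_{L^2}$. (Note, however, that Proposition~\ref{estimateheat} is stated for $1<r\leq m<\infty$ and zero initial data, so at $r=m=1$ you should invoke \eqref{parabolicBL} instead; this is what the paper does.)

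The genuine gap is your explanation of the $\mathcal{E}_0^{1/2}$ term. You have overlooked that $u_v^\sharp$ carries the nonzero initial data $(\operatorname{Id}-S_0)u_0$, and your parabolic estimate must account for it. The $\mathcal{E}_0^{1/2}$ term comes \emph{entirely} from this initial data contribution:
\[
\|(\operatorname{Id}-S_0)u_0\|_{\dot B^{-1/2}_{2,1}}
=\sum_{k\geq -1}2^{-k/2}\|\Delta_k(\operatorname{Id}-S_0)u_0\|_{L^2}
\lesssim\|u_0\|_{L^2}\lesssim\mathcal{E}_0^{1/2},
\]
where the sum converges precisely because the decomposition $u_0=S_0u_0+(\operatorname{Id}-S_0)u_0$ restricts this piece to frequencies $\gtrsim 1$. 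Your entire discussion of a ``low-frequency/global-in-time issue'' and the need for a frequency or time-integrability splitting of $u\otimes u$ is therefore addressing a phantom difficulty: there is no such obstacle, and no use of $\|u\|_{L^\infty L^2}$, $L^4L^4$ interpolation, or Chemin--Lerner technology is needed here. Once you include the initial data term in \eqref{parabolicBL}, the proof is a direct two-line computation.
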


\begin{proof}
	Let us write the Duhamel representation~\eqref{duhamel} of~$u_v^\sharp$:
	$$
		u_v^\sharp(t) = e^{t\Delta} (\operatorname{Id} - S_0)u_0
		- \int_0^t e^{(t-\tau)\Delta} P \left(u\cdot\nabla u\right)(\tau)\, d\tau \,,
	$$
	where~$P:L^2\to L^2$ is the Leray projector onto divergence-free vector fields. By~\eqref{parabolicBL}, we have
	\begin{equation*}
		\|u_v^\sharp\|_{L^1(\mathbb{R}^+;\dot B^\frac 32_{2,1})}
		\lesssim
		\|(\operatorname{Id} - S_0)u_0\|_{\dot B^{-\frac 12}_{2,1}}
		+
		\|P(u\cdot\nabla u)\|_{L^1(\mathbb{R}^+;\dot B^{-\frac 12}_{2,1})}\, .
	\end{equation*}
	Then, on the one hand, we find
	\begin{equation*}
		\|(\operatorname{Id} - S_0)u_0\|_{\dot B^{-\frac 12}_{2,1}}
		=
		\sum_{k\geq -1} 2^{-\frac k2}
		\|\Delta_k(\operatorname{Id} - S_0)u_0\|_{L^2}
		\lesssim \|u_0\|_{L^2}\, .
	\end{equation*}
	On the other hand, recalling the three-dimensional paradifferential product law (see Appendix~\ref{LP decomposition})
	\begin{equation}\label{para2}
		\|fg\|_{\dot B^{s+t-\frac 32}_{2,1}}
		\lesssim \|f\|_{\dot H^s} \|g\|_{\dot H^t}\, ,
	\end{equation}
	which is valid for all~$s,t\in (-\frac 32,\frac 32)$ with~$s+t>0$, we infer
	\begin{equation*}
		\begin{aligned}
			\|P(u\cdot\nabla u)\|_{L^1(\mathbb{R}^+;\dot B^{-\frac 12}_{2,1})}
			& \leq
			\|u\cdot\nabla u\|_{L^1(\mathbb{R}^+;\dot B^{-\frac 12}_{2,1})}
			\\
			& \lesssim
			\big\|
			\|u\|_{\dot H^1}\|\nabla u\|_{L^2}
			\big\|_{L^1(\mathbb{R}^+)}
			\lesssim \|u\|_{L^2(\mathbb{R}^+;\dot H^1)}^2\, ,
		\end{aligned}
	\end{equation*}
	which concludes the proof.
\end{proof}

We move on now to estimating~$u_e$.

\begin{lem}\label{estimateue}
	There holds that
	\begin{equation*}
		\|u_e \|_{L^2 ([0,T];\dot B^{\frac 32}_{2,1}   )}
		\lesssim
		\big\|\|j\|_{L^2}\|B\|_{\dot H^1}\big\|_{L^2([0,T])}
		\lesssim
		\mathcal{E}_0^\frac 12\|B\|_{L^\infty([0,T];\dot H^1)} \, .
	\end{equation*}
\end{lem}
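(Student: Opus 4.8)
The plan is to estimate $u_e$ exactly as $u_v^\sharp$ was estimated, using the new parabolic regularity estimate from Proposition~\ref{estimateheat} in place of the classical Chemin--Lerner bound. Writing the Duhamel formula $u_e(t)=-\int_0^t e^{(t-\tau)\Delta}P(j\times B)(\tau)\,d\tau$ (the initial data vanishes), I would apply Proposition~\ref{estimateheat} with $m=2$, $r=2$, $p=2$, $q=1$ and the choice $\sigma+2+\frac 2m=\frac 32$, i.e.\ $\sigma=-\frac 32$, and source exponent $\sigma+\frac 2r=-\frac 12$. This gives
\begin{equation*}
	\|u_e\|_{L^2([0,T];\dot B^\frac 32_{2,1})}
	\lesssim \|P(j\times B)\|_{L^2([0,T];\dot B^{-\frac 12}_{2,1})}
	\lesssim \|j\times B\|_{L^2([0,T];\dot B^{-\frac 12}_{2,1})}\, ,
\end{equation*}
using that the Leray projector $P$ is bounded on $\dot B^{-\frac 12}_{2,1}$ (it acts diagonally in frequency and is an $L^2$-multiplier).

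Next I would bound the product $j\times B$ in $\dot B^{-\frac 12}_{2,1}$ at fixed time. Applying the three-dimensional paradifferential product law \eqref{para2} with $s=0$ and $t=1$ (so that $s+t=1>0$, both exponents lie in $(-\frac 32,\frac 32)$, and $s+t-\frac 32=-\frac 12$), I get
\begin{equation*}
	\|j\times B\|_{\dot B^{-\frac 12}_{2,1}}
	\lesssim \|j\|_{\dot H^0}\|B\|_{\dot H^1}
	= \|j\|_{L^2}\|B\|_{\dot H^1}\, .
\end{equation*}
Taking the $L^2$-norm in time of this pointwise inequality yields the first asserted estimate $\|u_e\|_{L^2([0,T];\dot B^\frac 32_{2,1})}\lesssim \big\|\,\|j\|_{L^2}\|B\|_{\dot H^1}\,\big\|_{L^2([0,T])}$.

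For the second inequality I would simply pull $\|B\|_{\dot H^1}$ out in the sup norm over $[0,T]$ and use Hölder (or rather just $\|fg\|_{L^2_t}\le \|f\|_{L^2_t}\|g\|_{L^\infty_t}$):
\begin{equation*}
	\big\|\,\|j\|_{L^2}\|B\|_{\dot H^1}\,\big\|_{L^2([0,T])}
	\le \|j\|_{L^2([0,T];L^2)}\,\|B\|_{L^\infty([0,T];\dot H^1)}\, ,
\end{equation*}
and then bound $\|j\|_{L^2([0,T];L^2)}\le \|j\|_{L^2(\mathbb{R}^+;L^2)}\lesssim \mathcal{E}_0^\frac 12$ by the energy inequality \eqref{energy}. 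The only genuine subtlety — and the step I would be most careful about — is the very first one: the classical parabolic estimate \eqref{parabolicCL}/\eqref{parabolicBL} with $q=1$ would require $r\le q=1$, forcing $r=1$ and thus an $L^1_t$-control of the source $j\times B$, which we do not have (we only control $j\in L^2_tL^2$). It is precisely Proposition~\ref{estimateheat}, valid for $1\le q\le m$ without the constraint $r\le q$, that makes the $L^2_t$-in-time gain of two derivatives legitimate here; everything else is routine product-law and Hölder bookkeeping.
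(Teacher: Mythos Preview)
Your proof is correct and follows essentially the same approach as the paper: Duhamel representation, Proposition~\ref{estimateheat} with $m=r=p=2$, $q=1$, the product law~\eqref{para2} with exponents $(0,1)$, and then H\"older in time together with the energy inequality. The only cosmetic slip is the sign in your Duhamel formula (the source $j\times B$ enters with a $+$ sign), which of course is irrelevant once norms are taken.
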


\begin{proof}
	Let us write the Duhamel representation~\eqref{duhamel} of~$u_e$:
	$$
		u_e(t) = \int_0^t e^{(t-\tau)\Delta} P\left(j\times B\right)(\tau)\, d\tau \,.
	$$
	By Proposition~\ref{estimateheat}, we have
	\begin{equation}\label{crucialestimate}
		\|u_e\|_{L^2([0,T];\dot B^\frac 32_{2,1})}
		\lesssim
		\|P(j\times B)\|_{L^2([0,T];\dot B^{-\frac 12}_{2,1})}
		\lesssim
		\|j\times B\|_{L^2([0,T];\dot B^{-\frac 12}_{2,1})}\, .
	\end{equation}
	Therefore, employing the paradifferential product rule~\eqref{para2} yields
	\begin{equation*}
		\|u_e\|_{L^2([0,T];\dot B^\frac 32_{2,1})}
		\lesssim
		\big\|\|j\|_{L^2}\|B\|_{\dot H^1}\big\|_{L^2([0,T])}
		\lesssim
		\|j\|_{L^2([0,T];L^2)}\|B\|_{L^\infty([0,T];\dot H^1)}\, ,
	\end{equation*}
	which concludes the proof.
\end{proof}

\begin{rema}
	It is to be emphasized that the parabolic regularity estimate~\eqref{parabolicBL} would have failed to establish Lemma~\ref{estimateue}. It is precisely in~\eqref{crucialestimate} above that Proposition~\ref{estimateheat} plays a fundamental role in allowing us to reach a global existence result.
\end{rema}

We are now in a position to conclude the proof of Theorem~\ref{mainthm}. Indeed, combining Proposition~\ref{estimatewave} (for~$s=1$) with Lemmas~\ref{estimateuvflat}, \ref{estimateuvsharp} and~\ref{estimateue}, and recalling that the embedding~$\dot B^\frac 32_{2,1}\subset L^\infty\cap\dot B^\frac 32_{2,\infty}$ is continuous, we arrive at
\begin{equation}\label{Fexpbound}
	F(t)\leq F_0
	\exp\left(
	C\left(c\left(\mathcal{E}_0^\frac 12+\mathcal{E}_0\right)+\mathcal{E}_0
	+\int_0^t\|j(\tau)\|_{L^2}^2F(\tau)\, d\tau\right)
	\right)\, ,
\end{equation}
for some constant~$C>0$ depending only on fixed parameters, where we have used the notation of Proposition~\ref{estimatewave}.

We are going to apply the following Gr\"onwall lemma to the preceding inequality.

\begin{lem}\label{gronwall}
	Let~$y(t)\in C([0,T];{\mathbb R}^+)$, $a(t)\in L^1([0,T];{\mathbb R}^+)$ and~$y_0\in\mathbb{R}$ be such that 
	\begin{equation*}
		y_0\int_0^ta(\tau)\, d\tau < 1\, ,
	\end{equation*}
	and
	\begin{equation*}
		y(t)\leq y_0 \exp\left(\int_0^t a(\tau)y(\tau)\, d\tau\right)\, ,
	\end{equation*}
	for every~$t\in [0,T]$. Then, it holds that
	\begin{equation*}
		y(t)\leq \frac{y_0}{1-y_0\int_0^ta(\tau)\, d\tau}\, ,
	\end{equation*}
	for every~$t\in[0,T]$.
\end{lem}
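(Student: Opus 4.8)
The plan is to reduce the implicit inequality to a differential inequality for an auxiliary quantity and then integrate it. First I would introduce
\begin{equation*}
	Y(t) := \int_0^t a(\tau) y(\tau)\, d\tau\, ,
\end{equation*}
which is absolutely continuous on $[0,T]$, non-decreasing (since $a,y\geq 0$), and satisfies $Y(0)=0$ with $Y'(t) = a(t) y(t)$ for almost every $t$. The hypothesis $y(t)\leq y_0 e^{Y(t)}$ then gives, for almost every $t$,
\begin{equation*}
	Y'(t) = a(t) y(t) \leq y_0\, a(t)\, e^{Y(t)}\, ,
\end{equation*}
that is, $e^{-Y(t)} Y'(t) \leq y_0\, a(t)$. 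Recognizing the left-hand side as $-\frac{d}{dt} e^{-Y(t)}$, I would integrate from $0$ to $t$ to obtain
\begin{equation*}
	1 - e^{-Y(t)} \leq y_0 \int_0^t a(\tau)\, d\tau\, .
\end{equation*}

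At this stage the smallness assumption $y_0\int_0^t a(\tau)\, d\tau < 1$ is exactly what is needed to invert: the right-hand side is strictly less than $1$, so $e^{-Y(t)} > 1 - y_0\int_0^t a(\tau)\, d\tau > 0$, whence
\begin{equation*}
	e^{Y(t)} \leq \frac{1}{1 - y_0\int_0^t a(\tau)\, d\tau}\, .
\end{equation*}
Plugging this back into $y(t)\leq y_0 e^{Y(t)}$ yields the claimed bound
\begin{equation*}
	y(t)\leq \frac{y_0}{1 - y_0\int_0^t a(\tau)\, d\tau}\, ,
\end{equation*}
for every $t\in[0,T]$. Note that when $y_0\leq 0$ the statement is trivial (the right-hand side is $\geq y_0$ and $y\geq 0$, but actually one checks $y\equiv 0$ directly, or simply observes the bound holds since $y_0 e^{Y(t)}\leq y_0$), so the substance is the case $y_0>0$, which is what the computation above handles.

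The only mild subtlety — and the step to be careful about rather than a genuine obstacle — is the regularity bookkeeping: $Y$ is merely absolutely continuous (because $a\in L^1$, not continuous), so the manipulations $\frac{d}{dt}e^{-Y} = -e^{-Y}Y'$ and the fundamental theorem of calculus must be invoked in their Lebesgue form, valid since $t\mapsto e^{-Y(t)}$ is absolutely continuous as the composition of the Lipschitz function $x\mapsto e^{-x}$ (on the bounded range of $Y$) with the absolutely continuous $Y$. Everything else is an elementary integration, and no appeal to earlier results in the paper is required.
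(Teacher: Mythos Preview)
Your proof is correct and is essentially the same as the paper's: the paper sets $f(t):=\exp\left(-\int_0^t a(\tau)y(\tau)\,d\tau\right)$, which is precisely your $e^{-Y(t)}$, derives $f'(t)\geq -a(t)y_0$ from $y(t)f(t)\leq y_0$, integrates to $f(t)\geq 1-y_0\int_0^t a(\tau)\,d\tau$, and concludes in the same way. Your additional remarks on absolute continuity and on the trivial case $y_0\leq 0$ are fine extras that the paper omits.
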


\begin{proof}
	Set
	\begin{equation*}
		f(t):=\exp\left(-\int_0^t a(\tau)y(\tau)\, d\tau\right)\, ,
	\end{equation*}
	for every~$t\in[0,T]$, so that~$y(t)f(t)\leq y_0$. Then, we compute
	\begin{equation*}
		f'(t)=-a(t)y(t)f(t)\geq -a(t)y_0\, ,
	\end{equation*}
	whence, integrating,
	\begin{equation*}
		f(t) \geq 1 -y_0\int_0^ta(\tau)\, d\tau\, .
	\end{equation*}
	Using again that~$y(t)f(t)\leq y_0$, we deduce
	\begin{equation*}
		y_0 \geq y(t)\left(1 -y_0\int_0^ta(\tau)\, d\tau\right)\, ,
	\end{equation*}
	which concludes the proof.
\end{proof}

Thus, applying Lemma~\ref{gronwall} to inequality~\eqref{Fexpbound}, we deduce that
\begin{equation}\label{postGronwall}
	F(t)\leq
	\frac{F_0\exp\left(
	C\left(c\left(\mathcal{E}_0^\frac 12+\mathcal{E}_0\right)+\mathcal{E}_0\right)
	\right)}
	{1-CF_0\exp\left(
	C\left(c\left(\mathcal{E}_0^\frac 12+\mathcal{E}_0\right)+\mathcal{E}_0\right)
	\right)\int_0^t \|j(\tau)\|_{L^2}^2 \,d\tau}
	\, ,
\end{equation}
as long as
\begin{equation*}
	CF_0\int_0^t \|j(\tau)\|^2 \,d\tau<
	\exp\left(
	-C\left(c\left(\mathcal{E}_0^\frac 12+\mathcal{E}_0\right)+\mathcal{E}_0\right)
	\right)\, .
\end{equation*}
Therefore, by considering any large constant~$C_*
\geq C\max\{1,2\sigma\}$, we finally conclude that if
\begin{equation*}
	C_*F_0\mathcal{E}_0\leq
	\exp\left(
	-C_*\left(c\left(\mathcal{E}_0^\frac 12+\mathcal{E}_0\right)+\mathcal{E}_0\right)
	\right)\, ,
\end{equation*}
then
\begin{equation}\label{hypothesis}
	CF_0\int_0^t \|j(\tau)\|^2 \,d\tau\leq \frac 12
	\exp\left(
	-C\left(c\left(\mathcal{E}_0^\frac 12+\mathcal{E}_0\right)+\mathcal{E}_0\right)
	\right)\, ,
\end{equation}
whence, combining~\eqref{postGronwall} and~\eqref{hypothesis},
\begin{equation*}
	F(t)\mathcal{E}_0\leq 2F_0\mathcal{E}_0 \exp\left(
	C\left(c\left(\mathcal{E}_0^\frac 12+\mathcal{E}_0\right)+\mathcal{E}_0\right)
	\right)\leq \frac 2{C_*}\, ,
\end{equation*}
for every~$t\in\mathbb{R}^+$. The proof of Theorem~\ref{mainthm} for~$s=1$ is now complete. \qed

\section{Proof of Theorem~\ref{mainthm} in the case~$s=\frac 12$}\label{proof2}

We focus now on the proof of the case~$s=\frac 12$. To this end, we consider the exact same decomposition~\eqref{decomposition} of~$u$ as in the previous section. Lemmas~\ref{estimateuvflat} and~\ref{estimateuvsharp} will serve to estimate the components~$u_v^\flat$ and~$u_v^\sharp$ here as well. As for~$u_e$, it will be handled through another estimate, whose starting point consists in using Ohm's law to control the electric current~$j$, rather than using the sole fact that~$j\in L^2L^2$ according to the energy inequality~\eqref{energy}.

More precisely, we have the following result.

\begin{lem}\label{estimateue2}
	There exists an independent constant~$C_0>0$ such that
	\begin{equation*}
		\begin{aligned}
			\langle u_e\rangle_{\dot B^{\frac 32}_{2,1}}
			& \leq
			\frac{(c(\mathcal{E}_0^\frac 12+\mathcal{E}_0)+\|cE\|_{L^2([0,T];\dot H^\frac 12)}^2)\|B\|_{L^\infty([0,T];\dot H^\frac 12)}^2}
			{C_0-\|B\|_{L^\infty([0,T];\dot H^\frac 12)}^2
			\Big(1+\|B\|_{L^\infty([0,T];\dot H^\frac 12)}^2\Big)}
			\\
			& \quad +
			\frac{
			\mathcal{E}_0\|B\|_{L^\infty([0,T];\dot H^\frac 12)}^4}
			{C_0-\|B\|_{L^\infty([0,T];\dot H^\frac 12)}^2
			\Big(1+\|B\|_{L^\infty([0,T];\dot H^\frac 12)}^2\Big)}
			\,,
		\end{aligned}
	\end{equation*}
	provided~$\|B\|_{L^\infty([0,T];\dot H^\frac 12)}^2\Big(1+\|B\|_{L^\infty([0,T];\dot H^\frac 12)}^2\Big)<C_0$.
\end{lem}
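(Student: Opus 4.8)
The plan is to start from the Duhamel representation of $u_e$, namely
\[
u_e(t) = \int_0^t e^{(t-\tau)\Delta} P(j\times B)(\tau)\, d\tau\,,
\]
and to split the source term $j\times B$ according to Ohm's law $j = \sigma(cE + u\times B)$, so that $j\times B = \sigma(cE)\times B + \sigma(u\times B)\times B$. The first piece $\sigma(cE)\times B$ carries a factor $cE$ which, by Proposition~\ref{estimatewave}, lives in $L^2([0,T];\dot H^\frac12)$; using the product rule~\eqref{para2} it contributes a term controlled by $\|cE\|_{L^2\dot H^\frac12}\|B\|_{L^\infty\dot H^\frac12}$, which is of ``$L^2$-in-time'' type and hence naturally estimated in the $\langle\cdot\rangle$-quantity by $\|cE\|_{L^2\dot H^\frac12}^2\|B\|_{L^\infty\dot H^\frac12}^2$. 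The second piece $\sigma(u\times B)\times B$ is quadratic in $B$ and the crucial point is to substitute the decomposition $u = u_v^\flat + u_v^\sharp + u_e$ into it.

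Concretely, I would apply Lemma~\ref{estimateheatadhoc} with $\sigma\mapsto -\frac12$, $p=2$ to deduce $\langle u_e\rangle_{\dot B^\frac32_{2,1}}\lesssim \langle j\times B\rangle_{\dot B^{-\frac12}_{2,1}}$, and then estimate $\langle j\times B\rangle_{\dot B^{-\frac12}_{2,1}}$ term by term. For $(cE)\times B$ one uses the $L^2$-slot of the $\langle\cdot\rangle$-norm. For $(u\times B)\times B$ one splits $u=u_v^\flat+u_v^\sharp+u_e$: the contribution of $u_v^\flat$ goes into the $L^2$-slot (since $u_v^\flat\in L^2\dot B^\frac32_{2,1}$ by Lemma~\ref{estimateuvflat}, giving $\lesssim \mathcal{E}_0\|B\|_{L^\infty\dot H^\frac12}^4$ after two applications of~\eqref{para2} with $B\in L^\infty\dot H^\frac12$), the contribution of $u_v^\sharp$ goes into the $L^1$-slot with the factor $c$ (since $u_v^\sharp\in L^1\dot B^\frac32_{2,1}$ by Lemma~\ref{estimateuvsharp}, giving $\lesssim c(\mathcal{E}_0^\frac12+\mathcal{E}_0)\|B\|_{L^\infty\dot H^\frac12}^2$), and the contribution of $u_e$ itself produces a term $\lesssim \langle u_e\rangle_{\dot B^\frac32_{2,1}}\|B\|_{L^\infty\dot H^\frac12}^2(1+\|B\|_{L^\infty\dot H^\frac12}^2)$ — here one must be careful that $u_e$ appears in both slots of its own $\langle\cdot\rangle$-norm, which is why the factor $(1+\|B\|_{L^\infty\dot H^\frac12}^2)$ and not just $\|B\|_{L^\infty\dot H^\frac12}^2$ shows up. Collecting everything gives an inequality of the form
\[
\langle u_e\rangle_{\dot B^\frac32_{2,1}} \le A + \|B\|_{L^\infty\dot H^\frac12}^2\big(1+\|B\|_{L^\infty\dot H^\frac12}^2\big)\,\langle u_e\rangle_{\dot B^\frac32_{2,1}}\,,
\]
with $A$ equal to the sum of the three ``good'' terms above; absorbing the $\langle u_e\rangle$ on the right into the left, which is legitimate precisely when $\|B\|_{L^\infty\dot H^\frac12}^2(1+\|B\|_{L^\infty\dot H^\frac12}^2)<C_0$, yields the stated bound.

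The main obstacle, and the place where the $\langle\cdot\rangle$-machinery of Section~\ref{parabolicregularity} is indispensable, is keeping track of which time-integrability slot each term lands in. The $cE$ and $u_v^\sharp$ contributions are only $L^1$ (or $L^2$) in time with no better control, so one cannot simply work in a fixed $L^r_t\dot B^s$ space; the nonlinear quantity $\langle\cdot\rangle_X$ is designed exactly so that $c\|\overline f\|_{L^1}+\|\widetilde f\|_{L^2}^2$ closes against the right-hand side of Proposition~\ref{estimatewave}. A secondary subtlety is the self-referential appearance of $u_e$ inside $(u_e\times B)\times B$: one has to decompose $u_e=\overline{u_e}+\widetilde{u_e}$ optimally (as in~\eqref{Xdecomposition}), estimate $(\overline{u_e}\times B)\times B$ in the $L^1$-slot and $(\widetilde{u_e}\times B)\times B$ in the $L^2$-slot, square the latter, and recognize that both are bounded by $\langle u_e\rangle_{\dot B^\frac32_{2,1}}$ times a power of $\|B\|_{L^\infty\dot H^\frac12}$ — the $L^1$-slot giving the linear factor $\|B\|^2$ and the $L^2$-slot the factor $\|B\|^2(1+\|B\|^2)$ after expanding the square, which combine into the stated denominator. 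Everything else — the product estimates~\eqref{para2}, the embedding $\dot B^\frac32_{2,1}\subset L^\infty\cap\dot B^\frac32_{2,\infty}$, and the bookkeeping of $\mathcal{E}_0$-powers via the energy inequality~\eqref{energy} — is routine.
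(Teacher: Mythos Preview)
Your plan is correct and follows essentially the same route as the paper: apply Lemma~\ref{estimateheatadhoc} to get $\langle u_e\rangle_{\dot B^{3/2}_{2,1}}\lesssim\langle j\times B\rangle_{\dot B^{-1/2}_{2,1}}$, expand $j$ via Ohm's law, split $u=u_v^\flat+u_v^\sharp+u_e$, route $u_v^\sharp$ through the $L^1$-slot and $cE$, $u_v^\flat$ through the $L^2$-slot, and absorb the self-referential $u_e$-contribution. One small correction: the inner estimate $\|u\times B\|_{\dot H^{1/2}}\lesssim\|u\|_{\dot B^{3/2}_{2,1}}\|B\|_{\dot H^{1/2}}$ is an instance of~\eqref{para1}, not~\eqref{para2} (the latter outputs a $\dot B^{s+t-3/2}_{2,1}$-norm rather than an $\dot H^s$-norm), but this does not affect the argument.
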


\begin{proof}
	An application of Lemma~\ref{estimateheatadhoc} first produces the estimate
	\begin{equation}\label{half 1}
		\langle u_e\rangle_{\dot B^{\frac 32}_{2,1}}
		\lesssim
		\langle j\times B\rangle_{\dot B^{-\frac 12}_{2,1}}
		\, .
	\end{equation}
	For later use, let us consider some decomposition~$\|u_e\|_{\dot B^{\frac 32}_{2,1}}=\overline f+\widetilde f$, with~$\overline f,\widetilde f\geq 0$, such that
	\begin{equation}\label{decomposition f}
		c\|\overline f\|_{L^1([0,T])}+\|\widetilde f\|_{L^2([0,T])}^2 \leq 2 \langle u_e\rangle_{\dot B^{\frac 32}_{2,1}}\,.
	\end{equation}
	Next, using the paradifferential product rule~\eqref{para2} yields that
	\begin{equation*}
		\|j\times B\|_{\dot B^{-\frac 12}_{2,1}}
		\lesssim
		\|j\|_{\dot H^{\frac 12}}\|B\|_{\dot H^\frac 12}
		\, .
	\end{equation*}
	Recall now that~$j$ is characterized by Ohm's law~$j=\sigma(cE+u\times B)$. Hence, by virtue of the paradifferential product law~\eqref{para1},
	there holds
	\begin{equation*}
		\|j\|_{\dot H^\frac 12} \lesssim \|cE\|_{\dot H^\frac 12} + \|u\|_{\dot B^\frac 32_{2,1}}\|B\|_{\dot H^\frac 12}\, ,
	\end{equation*}
	which, when combined with the previous estimate, produces the control
	\begin{equation*}
		\begin{aligned}
			\|j\times B\|_{\dot B^{-\frac 12}_{2,1}}
			& \lesssim
			\|cE\|_{\dot H^\frac 12}\|B\|_{\dot H^\frac 12}
			+
			\|u\|_{\dot B^\frac 32_{2,1}}\|B\|_{\dot H^\frac 12}^2
			\\
			& \lesssim
			\|cE\|_{\dot H^\frac 12}\|B\|_{\dot H^\frac 12}
			+
			(\|u_v^\flat\|_{\dot B^\frac 32_{2,1}}+\widetilde f)\|B\|_{\dot H^\frac 12}^2
			\\
			&\quad +
			(\|u_v^\sharp\|_{\dot B^\frac 32_{2,1}}+\overline f)\|B\|_{\dot H^\frac 12}^2
			\, .
		\end{aligned}
	\end{equation*}
	
	In particular, it follows that~$\|j\times B\|_{\dot B^{-\frac 12}_{2,1}}$ can be decomposed as~$\overline g+\widetilde g$ with
	\begin{equation*}
		\begin{aligned}
			0 & \leq \overline g
			\lesssim
			(\|u_v^\sharp\|_{\dot B^\frac 32_{2,1}}+\overline f)\|B\|_{\dot H^\frac 12}^2
			=:\overline h
			\\
			0 & \leq \widetilde g
			\lesssim
			\|cE\|_{\dot H^\frac 12}\|B\|_{\dot H^\frac 12}
			+
			(\|u_v^\flat\|_{\dot B^\frac 32_{2,1}}+\widetilde f)\|B\|_{\dot H^\frac 12}^2
			=:\widetilde h
			\,.
		\end{aligned}
	\end{equation*}
	Indeed, it suffices to set~$\overline g=\|j\times B\|_{\dot B^{-\frac 12}_{2,1}}\mathds{1}_{\{\overline h\geq\widetilde h\}}$ and~$\widetilde g=\|j\times B\|_{\dot B^{-\frac 12}_{2,1}}\mathds{1}_{\{\overline h<\widetilde h\}}$, for instance. It now holds that
	\begin{equation*}
		\begin{aligned}
			\langle j\times B\rangle_{\dot B^{-\frac 12}_{2,1}}
			& \leq
			c\|\overline g\|_{L^1([0,T])}+\|\widetilde g\|_{L^2([0,T])}^2
			\lesssim
			c\|\overline h\|_{L^1([0,T])}+\|\widetilde h\|_{L^2([0,T])}^2
			\\
			& \lesssim
			c\|u_v^\sharp\|_{L^1(0,T;\dot B^\frac 32_{2,1})}\|B\|_{L^\infty([0,T];\dot H^\frac 12)}^2
			+
			c\|\overline f\|_{L^1([0,T])}\|B\|_{L^\infty([0,T];\dot H^\frac 12)}^2
			\\
			& \quad +\|cE\|_{L^2([0,T];\dot H^\frac 12)}^2\|B\|_{L^\infty([0,T];\dot H^\frac 12)}^2
			\\
			& \quad
			+\|u_v^\flat\|_{L^2([0,T];\dot B^\frac 32_{2,1})}^2\|B\|_{L^\infty([0,T];\dot H^\frac 12)}^4
			+
			\|\widetilde f\|_{L^2([0,T])}^2\|B\|_{L^\infty([0,T];\dot H^\frac 12)}^4 \,,
		\end{aligned}
	\end{equation*}
	which implies, recalling~\eqref{decomposition f} and invoking Lemmas~\ref{estimateuvflat} and~\ref{estimateuvsharp}, that
	\begin{equation}\label{half 2}
		\begin{aligned}
			\langle j\times B\rangle_{\dot B^{-\frac 12}_{2,1}}
			& \lesssim
			c(\mathcal{E}_0^\frac 12+\mathcal{E}_0)\|B\|_{L^\infty([0,T];\dot H^\frac 12)}^2
			+
			\mathcal{E}_0\|B\|_{L^\infty([0,T];\dot H^\frac 12)}^4
			\\
			& \quad +\|cE\|_{L^2([0,T];\dot H^\frac 12)}^2\|B\|_{L^\infty([0,T];\dot H^\frac 12)}^2
			\\
			& \quad
			+
			\langle u_e\rangle_{\dot B^{\frac 32}_{2,1}}\|B\|_{L^\infty([0,T];\dot H^\frac 12)}^2
			\left(1+\|B\|_{L^\infty([0,T];\dot H^\frac 12)}^2\right) \,.
		\end{aligned}
	\end{equation}

	Therefore, combining~\eqref{half 1} with~\eqref{half 2}, we finally find that there exists an independent constant~$C_0>0$ such that
	\begin{equation*}
		\begin{aligned}
			\Big(
			C_0-\|B\|_{L^\infty([0,T];\dot H^\frac 12)}^2
			\Big(1+\|B\|_{L^\infty([0,T];\dot H^\frac 12)}^2\Big)
			\Big)
			\langle u_e\rangle_{\dot B^{\frac 32}_{2,1}}
			\hspace{-70mm}&
			\\
			& \leq
			(c(\mathcal{E}_0^\frac 12+\mathcal{E}_0)+\|cE\|_{L^2([0,T];\dot H^\frac 12)}^2)\|B\|_{L^\infty([0,T];\dot H^\frac 12)}^2
			+
			\mathcal{E}_0\|B\|_{L^\infty([0,T];\dot H^\frac 12)}^4
			\,,
		\end{aligned}
	\end{equation*}
	as long as~$\|B\|_{L^\infty([0,T];\dot H^\frac 12)}^2\Big(1+\|B\|_{L^\infty([0,T];\dot H^\frac 12)}^2\Big)<C_0$. The proof of the lemma is now complete.
\end{proof}

\begin{rema}
	As before, we emphasize here that the new parabolic estimates from Section~\ref{parabolicregularity} are critical for the proof of the preceding lemma. In particular, note that the bound~\eqref{half 1} cannot be justified solely with the classical estimate~\eqref{parabolicBL} and requires the use of Proposition~\ref{estimateheat} (through an application of Lemma~\ref{estimateheatadhoc}).
\end{rema}

We proceed now to the conclusion of the proof of Theorem~\ref{mainthm}. To this end, we first decompose the velocity field~$u$ as
\begin{equation*}
	u=(u_v^\sharp+u_e^1)+(u_v^\flat+u_e^2) \in L^1([0,T];\dot B^\frac 32_{2,1})+L^2([0,T];\dot B^\frac 32_{2,1})\, ,
\end{equation*}
where we use a decomposition~$u_e=u_e^1+u_e^2$ with the property~\eqref{Xdecomposition}, that is
\begin{equation*}
	c\|u_e^1\|_{L^1([0,T];\dot B^\frac 32_{2,1})} + \|u_e^2\|_{L^2([0,T];\dot B^\frac 32_{2,1})}^2
	\lesssim \langle u_e\rangle_{\dot B^\frac 32_{2,1}}\,.
\end{equation*}
Then, combining Proposition~\ref{estimatewave} (for~$s=\frac 12$) with Lemmas~\ref{estimateuvflat}, \ref{estimateuvsharp} and \ref{estimateue2}, we deduce the existence of a small constant~$C_0>0$ and a large constant~$C_1>0$ such that, as long as~$G(t)+G(t)^2<C_0$,
\begin{equation*}
	G(t)
	\leq G_0
	\exp\Big(
	C_1\big(c\left(\mathcal{E}_0^\frac 12+\mathcal{E}_0\right)+\mathcal{E}_0
	+\frac
	{\big(c(\mathcal{E}_0^\frac 12+\mathcal{E}_0)+G(t)\big)G(t)
	+
	\mathcal{E}_0G(t)^2}
	{C_0-G(t)-G(t)^2}
	\big)
	\Big)\, ,
\end{equation*}
where we have introduced the notation
\begin{equation*}
	\begin{aligned}
		G(t)&:=
		\sup_{r\in [0,t]}
		\frac 12\left(\|E(r)\|_{\dot H^s}^2+\|B(r)\|_{\dot H^s}^2+\sigma\int_0^r \|cE(\tau)\|_{\dot H^s}^2\, d\tau\right)
		\\
		G_0&:= \frac 12\left(\|E_0\|_{\dot H^s}^2+\|B_0\|_{\dot H^s}^2\right)\, .
	\end{aligned}
\end{equation*}

Recall that all unkowns are assumed to be smooth, for all estimates are to be performed on the regularized system~\eqref{NSMn}. In particular, $G(t)$ is assumed here to be continuous. Note, also, that it is non-decreasing.

Now, let us suppose there exists a finite time~$t_*>0$ such that
$$
G(t_*)+G(t_*)^2=\frac{C_0}{2} \quad \mbox{i.e.}  \quad G(t_*)=\frac{\sqrt{1+2C_0}-1}{2} \, \cdotp 
$$
It follows that
\begin{equation}\label{estimate G}
	\begin{aligned}
		G(t_*)
		& \leq G_0
		\exp\Big(
		C_1
		\frac
		{\big(c(\mathcal{E}_0^\frac 12+\mathcal{E}_0)+\mathcal{E}_0\big)C_0
		-\mathcal{E}_0G(t_*)
		+\big(1-c(\mathcal{E}_0^\frac 12+\mathcal{E}_0)\big)G(t_*)^2}
		{C_0-G(t_*)-G(t_*)^2}
		\Big)
		\\
		& \leq G_0
		\exp\Big(
		2C_1\big(c(\mathcal{E}_0^\frac 12+\mathcal{E}_0)+\mathcal{E}_0\big)
		+\frac{C_0C_1}{2}
		\Big)\, .
	\end{aligned}
\end{equation}
Thus, we reach a contradiction whenever the initial datum is assumed to satisfy that
\begin{equation}\label{initial}
	G_0
	\exp\left(
	2C_1\left(c(\mathcal{E}_0^\frac 12+\mathcal{E}_0)+\mathcal{E}_0\right)
	+\frac{C_0C_1}{2}
	\right)
	<\frac{\sqrt{1+2C_0}-1}{2}\, \cdotp
\end{equation}
In other words, we conclude that, whenever~\eqref{initial} holds, one has
\begin{equation*}
	G(t)<\frac{\sqrt{1+2C_0}-1}{2}\,,
\end{equation*}
for every~$t\geq 0$.

Therefore, we finally conclude that there exists some possibly large constant~$C_*>0$ such that if
\begin{equation*}
	C_*G_0\leq
	\exp\left(
	-C_*\left(c\left(\mathcal{E}_0^\frac 12+\mathcal{E}_0\right)+\mathcal{E}_0\right)
	\right)\, ,
\end{equation*}
then, repeating estimate~\eqref{estimate G} for all~$t\geq 0$,
\begin{equation*}
	G(t)\leq C_*G_0
	\exp\left(
	C_*\left(c\left(\mathcal{E}_0^\frac 12+\mathcal{E}_0\right)+\mathcal{E}_0\right)
	\right)\leq 1\, ,
\end{equation*}
for every~$t\in\mathbb{R}^+$. The proof of Theorem~\ref{mainthm} for~$s=\frac 12$ is now complete. \qed

\section{Proof of Theorem~\ref{mainthm} in the case~$s\in\left[\frac 12,\frac 32\right)$}\label{proof3}

Here, we extend our existence result for~$s=\frac 12$, established in the preceding section, to the whole range of parameters~$s\in \left[\frac 12,\frac 32\right)$. This is simple. Indeed, fixing the value of the parameter~$s\in\left(\frac 12,\frac 32\right)$, by virtue of the interpolation inequality
\begin{equation*}
	\begin{aligned}
		\frac 12\left(\|E_0\|_{\dot H^\frac 12}^2+\|B_0\|_{\dot H^\frac 12}^2\right)
		& \leq
		\left(\frac 12\left(\|E_0\|_{\dot H^s}^2+\|B_0\|_{\dot H^s}^2\right)\right)^{\frac 1{2s}}
		\\
		& \quad \times
		\left(\frac 12\left(\|E_0\|_{L^2}^2+\|B_0\|_{L^2}^2\right)\right)^{1-\frac 1{2s}}
		\\
		& \leq
		\left(\frac 12\left(\|E_0\|_{\dot H^s}^2+\|B_0\|_{\dot H^s}^2\right)\right)^{\frac 1{2s}}
		\mathcal{E}_0^{1-\frac 1{2s}}\, ,
	\end{aligned}
\end{equation*}
we see that, for any given constant~$C_*>0$, it holds
\begin{equation*}
	C_*\frac 12\left(\|E_0\|_{\dot H^\frac 12}^2+\|B_0\|_{\dot H^\frac 12}^2\right)\leq
	\exp\left(
	-C_*\left(c\left(\mathcal{E}_0^\frac 12+\mathcal{E}_0\right)+\mathcal{E}_0\right)
	\right)\, ,
\end{equation*}
as soon as
\begin{equation}\label{full1}
	C_*^{2s}\left(\frac 12\left(\|E_0\|_{\dot H^s}^2+\|B_0\|_{\dot H^s}^2\right)\right)
	\mathcal{E}_0^{2s-1}
	\leq \exp\left(
	-2sC_* \left(c\left(\mathcal{E}_0^\frac 12+\mathcal{E}_0\right)+\mathcal{E}_0\right)
	\right)\, .
\end{equation}

Therefore, assuming that the initial data satisfies~\eqref{full1} for some sufficiently large constant~$C_*>0$, we deduce from Theorem~\ref{mainthm} for~$s=\frac 12$ that there exists a global weak solution of the Navier--Stokes--Maxwell system~\eqref{NSM} such that~$E,B\in L^\infty(\mathbb{R}^+;\dot H^\frac 12)$, $E\in L^2(\mathbb{R}^+;\dot H^\frac 12)$ and~$u\in L^1(\mathbb{R}^+;\dot B^\frac 32_{2,1})+L^2(\mathbb{R}^+;\dot B^\frac 32_{2,1})$.

Finally, a direct application of Proposition~\ref{estimatewave} shows that the electromagnetic field actually enjoys the regularity~$E,B\in L^\infty(\mathbb{R}^+;\dot H^s)$ and~$E\in L^2(\mathbb{R}^+;\dot H^s)$, which concludes the proof of the whole theorem. \qed

\section{Proof of Theorem~\ref{mainthm2d}}\label{2dcase}

We provide here the proof of Theorem~\ref{mainthm2d} based on the proof of Theorem~1.1 from~\cite{Masmoudi10jmpa}.

We are now considering a weak solution of the two-dimensional incompressible Navier--Stokes--Maxwell system~\eqref{NSM}, in the functional spaces~\eqref{energyspace2}, satisfying the energy inequality~\eqref{energy}. As usual, all formal computations can be fully justified by considering smooth solutions of the approximate systems~\eqref{NSMn} instead. The goal of the proof consists in showing the validity of the~$\dot H^s$-bound~\eqref{estimateHsEB12d}, uniformly in~$c$, provided~\eqref{smalldataHs2d} holds initially.

When compared to~\cite{Masmoudi10jmpa}, the main improvement of the present proof comes from the new technology developed in Section~\ref{parabolicregularity}, which allows us to control the flow~$u$ in~$L^2([0,T];L^\infty)$ rather than~$L^1([0,T];L^\infty)$, as was initially done in~\cite{Masmoudi10jmpa}. This temporal refinement will then enable an improved use of Proposition~\ref{estimatewave}, which will result in bounds which are uniform as~$c$ becomes large.

\medskip

We introduce here the following decomposition, which is a very slight variant of the three-dimensional decomposition \eqref{decomposition}:
\begin{equation*}
	u=u_v^\flat+u_v^\sharp+u_e \, ,
\end{equation*}
where~$u_v^\flat$ is the solution of
$$
	\left\{
	\begin{aligned}
		\partial_t u_v^\flat - \mu \Delta u_v^\flat &= 0
		\\
		\operatorname{div} u_v^\flat&= 0
		\\
		u_{v|t=0}^\flat& = u_0
		\, ,
	\end{aligned}
	\right.
$$
and~$u_v^\sharp$ solves
$$
\left\{
	\begin{aligned}
		\partial_t u_v^\sharp - \mu \Delta u_v^\sharp &= -\nabla p_v^\sharp- u \cdot \nabla u  \\
		\operatorname{div}  u_v^\sharp&= 0 \\
		 u_{v|t=0}^\sharp&  = 0\, ,
	\end{aligned}
	\right.
$$
whereas~$u_e$ solves
$$
\left\{
	\begin{aligned}
		\partial_t u_e  - \mu \Delta u_e &= -\nabla p_e  + j \times B \\
		\operatorname{div}  u_e &= 0 \\
		 u_{e|t=0} &  = 0\, .
	\end{aligned}
	\right.
$$

\medskip

The first estimate concerns~$u_v^\flat $.

\begin{lem}\label{estimateuvflat2d}
	There holds that
	\begin{equation*}
		\|u_v^\flat  \|_{L^\infty(\mathbb{R}^+;L^2)}
		\lesssim
		\mathcal{E}_0^\frac 12\, ,
	\end{equation*}
	and
	\begin{equation*}
		\|u_v^\flat  \|_{L^2 ({\mathbb R}^+;L^\infty\cap\dot H^1)}
		\lesssim
		\mathcal{E}_0^\frac 12\, .
	\end{equation*}
\end{lem}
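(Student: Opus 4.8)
The plan is to notice that $u_v^\flat$ is simply the heat flow issued from the initial datum $u_0\in L^2$, namely $u_v^\flat(t)=e^{\mu t\Delta}u_0$, and that $\|u_0\|_{L^2}\leq(2\mathcal{E}_0)^{1/2}$ by definition of $\mathcal{E}_0$. There are then three bounds to produce: one in $L^\infty(\mathbb{R}^+;L^2)$, one in $L^2(\mathbb{R}^+;\dot H^1)$, and one in $L^2(\mathbb{R}^+;L^\infty)$, the last two together yielding the announced control in $L^2(\mathbb{R}^+;L^\infty\cap\dot H^1)$.

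For the first two bounds, I would run the classical energy identity for the heat equation: testing $\partial_t u_v^\flat-\mu\Delta u_v^\flat=0$ against $u_v^\flat$, integrating over $\mathbb{R}^2$ and then over $[0,t]$, gives
\begin{equation*}
	\frac{1}{2}\|u_v^\flat(t)\|_{L^2}^2+\mu\int_0^t\|\nabla u_v^\flat(\tau)\|_{L^2}^2\,d\tau=\frac{1}{2}\|u_0\|_{L^2}^2\leq\mathcal{E}_0\, ,
\end{equation*}
for every $t\geq0$, from which both $\|u_v^\flat\|_{L^\infty(\mathbb{R}^+;L^2)}\lesssim\mathcal{E}_0^{1/2}$ and $\|u_v^\flat\|_{L^2(\mathbb{R}^+;\dot H^1)}\lesssim\mathcal{E}_0^{1/2}$ follow at once, with constants depending only on the fixed viscosity $\mu$.

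The $L^2(\mathbb{R}^+;L^\infty)$ bound I would obtain exactly as for the term $u_1$ in the proof of Corollary~\ref{2dproperty}: recalling from~\cite[Theorem~2.34]{bahouri} that $v\mapsto\|e^{t\Delta}v\|_{L^2(\mathbb{R}^+;L^\infty)}$ defines an equivalent norm on $\dot B^{-1}_{\infty,2}$ (a harmless rescaling of time absorbing the factor $\mu$), and that in dimension $d=2$ the Besov embedding gives the continuous inclusion $L^2(\mathbb{R}^2)=\dot B^0_{2,2}\subset\dot B^{-1}_{\infty,2}$, I would conclude that
\begin{equation*}
	\|u_v^\flat\|_{L^2(\mathbb{R}^+;L^\infty)}=\|e^{\mu t\Delta}u_0\|_{L^2(\mathbb{R}^+;L^\infty)}\lesssim\|u_0\|_{\dot B^{-1}_{\infty,2}}\lesssim\|u_0\|_{L^2}\lesssim\mathcal{E}_0^{1/2}\, .
\end{equation*}
Combined with the $\dot H^1$ bound above, this gives the stated estimate in $L^2(\mathbb{R}^+;L^\infty\cap\dot H^1)$.

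I do not expect any genuine obstacle here: this is a routine linear heat-flow estimate. The only point going slightly beyond the elementary energy identity is the $L^2_tL^\infty$ control, and this has already been settled (in precisely the same manner) within the proof of Corollary~\ref{2dproperty}; the present lemma merely records it, alongside the energy bounds, in the notation of Section~\ref{2dcase}.
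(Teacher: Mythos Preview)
Your proposal is correct and follows essentially the same route as the paper. For the $L^2(\mathbb{R}^+;L^\infty)$ bound you invoke exactly the argument the paper uses (the one from Corollary~\ref{2dproperty}); for the $L^\infty L^2\cap L^2\dot H^1$ bounds you use the elementary energy identity for the heat flow, whereas the paper simply cites the parabolic estimate~\eqref{parabolicBL} --- these are equivalent justifications of the same inequality, yours being perhaps the more direct of the two.
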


\begin{proof}
	The control of~$u_v^\flat$ in~$L^2L^\infty$ proceeds exactly as the estimate on~$u_1$ in the proof of Corollary~\ref{2dproperty}. Therefore, there only remains to bound the size of~$u_v^\flat$ in~$L^\infty L^2\cap L^2\dot H^1$. In fact, this easily follows from an application of the parabolic regularity estimate~\eqref{parabolicBL}, which yields
	\begin{equation*}
		\|u_v^\flat \|_{L^\infty(\mathbb{R}^+;L^2)\cap L^2 (\mathbb{R}^+;\dot H^1 )}
		\lesssim
		\|u_0\|_{ L^2}\, ,
	\end{equation*}
	thereby completing the justification of the lemma.
\end{proof}

As for~$u_v^\sharp$, we have the following result.

\begin{lem}\label{estimateuvsharp2d}
	There holds that
	\begin{equation*}
		\|u_v^\sharp \|_{L^\infty (\mathbb{R}^+;L^2 )}
		\lesssim \mathcal{E}_0 \, ,
	\end{equation*}
	and
	\begin{equation*}
		\|u_v^\sharp \|_{L^2 (\mathbb{R}^+;L^\infty\cap\dot H^1 )}
		\lesssim
		\|u_v^\sharp \|_{ L^2 ({\mathbb R}^+;\dot B^{1}_{2,1}   )}
		\lesssim \mathcal{E}_0 \, .
	\end{equation*}
\end{lem}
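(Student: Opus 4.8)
The plan is to exploit the Duhamel representation of $u_v^\sharp$ together with the parabolic regularity estimates of Section~\ref{parabolicregularity}, in close analogy with the treatment of $u_2$ in the proof of Corollary~\ref{2dproperty}. Since $u_{v|t=0}^\sharp=0$, we have
$$
u_v^\sharp(t)=-\int_0^t e^{\mu(t-\tau)\Delta}P(u\cdot\nabla u)(\tau)\,d\tau\,,
$$
where $P$ is the Leray projector. The energy inequality~\eqref{energy} supplies the only available input: $u\in L^\infty(\mathbb{R}^+;L^2)\cap L^2(\mathbb{R}^+;\dot H^1)$, hence $u\in L^4(\mathbb{R}^+;\dot H^{1/2})$ by interpolation, with $\|u\|_{L^4_t\dot H^{1/2}}^2\lesssim\mathcal{E}_0$.

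First I would establish the $L^\infty_tL^2$ bound. One cannot simply apply $P$ and the energy estimate to $u\cdot\nabla u$ directly; instead, writing $u\cdot\nabla u=\operatorname{div}(u\otimes u)$ and using that $e^{\mu t\Delta}$ applied to a divergence gains half a derivative, a standard heat-kernel estimate gives $\|u_v^\sharp(t)\|_{L^2}\lesssim\int_0^t(t-\tau)^{-1/2}\|u\otimes u(\tau)\|_{L^2}\,d\tau$; then $\|u\otimes u\|_{L^2}\lesssim\|u\|_{L^4}^2\lesssim\|u\|_{L^2}\|u\|_{\dot H^1}$ (two-dimensional Ladyzhenskaya), which lies in $L^1_t$ with norm $\lesssim\mathcal{E}_0$, and a Young-type convolution argument closes the $L^\infty_tL^2$ estimate with bound $\mathcal{E}_0$. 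Alternatively, one can invoke the Chemin--Lerner estimate~\eqref{parabolicCL} with $\sigma=-2$, $m=\infty$, $p=q=2$, $r=1$ to get $\|u_v^\sharp\|_{L^\infty_t\dot B^0_{2,2}}\lesssim\|u\otimes u\|_{L^1_t\dot B^1_{2,2}}$ and bound the right-hand side via~\eqref{para3}.

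Next I would establish the $L^2_t\dot B^1_{2,1}$ bound, which is the substantive part and the place where Proposition~\ref{estimateheat} (rather than the classical estimate~\eqref{parabolicBL}) is essential. Applying Proposition~\ref{estimateheat} with $\sigma=-1$, $m=r=2$, $p=2$, $q=1$ gives
$$
\|u_v^\sharp\|_{L^2(\mathbb{R}^+;\dot B^1_{2,1})}\lesssim\|P(u\cdot\nabla u)\|_{L^2(\mathbb{R}^+;\dot B^{-1}_{2,1})}\lesssim\|u\otimes u\|_{L^2(\mathbb{R}^+;\dot B^0_{2,1})}\,,
$$
and then the two-dimensional paradifferential product law~\eqref{para3} with $s=t=\tfrac12$ yields $\|u\otimes u\|_{\dot B^0_{2,1}}\lesssim\|u\|_{\dot H^{1/2}}^2$, so that $\|u_v^\sharp\|_{L^2_t\dot B^1_{2,1}}\lesssim\|u\|_{L^4_t\dot H^{1/2}}^2\lesssim\|u\|_{L^\infty_tL^2}\|u\|_{L^2_t\dot H^1}\lesssim\mathcal{E}_0$. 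The embeddings $\dot B^1_{2,1}\subset L^\infty$ and $\dot B^1_{2,1}\subset\dot H^1$ then give the claimed $L^2_t(L^\infty\cap\dot H^1)$ control. The main obstacle, exactly as flagged in the remark after Corollary~\ref{2dproperty}, is that the endpoint summability index $q=1$ is below $r=2$, so the classical Besov-valued parabolic estimate~\eqref{parabolicBL} does not apply and one genuinely needs the maximal-function argument behind Proposition~\ref{estimateheat}; everything else is a routine combination of Duhamel, interpolation, and the product law.
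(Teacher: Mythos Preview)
Your treatment of the $L^2_t\dot B^1_{2,1}$ bound is correct and coincides exactly with the paper's argument (which simply refers back to the control of $u_2$ in Corollary~\ref{2dproperty}).

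However, your $L^\infty_t L^2$ argument has a genuine gap in both alternatives. In the first approach, the pointwise bound
\[
\|u_v^\sharp(t)\|_{L^2}\lesssim\int_0^t(t-\tau)^{-1/2}\|u\otimes u(\tau)\|_{L^2}\,d\tau
\]
combined with $\|u\otimes u\|_{L^2}\in L^1_t$ does \emph{not} yield an $L^\infty_t$ bound: the kernel $s^{-1/2}$ is not in $L^\infty$ near the origin, and convolving it with an $L^1$ function can blow up (take $\|u\otimes u(\tau)\|_{L^2}$ concentrating near $\tau=t$). Even using that $\|u\otimes u\|_{L^2}\in L^2_t$, Hardy--Littlewood--Sobolev fails at the endpoint $q=\infty$. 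In the second alternative, taking $r=1$ in~\eqref{parabolicCL} forces you to estimate $\|u\otimes u\|_{L^1_t\dot B^1_{2,2}}=\|u\otimes u\|_{L^1_t\dot H^1}$; but the two-dimensional product law~\eqref{para3} requires both Sobolev exponents to be strictly below $1$, so you cannot reach $\dot H^1$ for the product from $u\in L^\infty_tL^2\cap L^2_t\dot H^1$ alone.

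The paper's fix is simple and you are very close to it: apply~\eqref{parabolicBL} with $m=\infty$, $r=q=2$ (which is allowed since $r\leq q\leq m$) to obtain
\[
\|u_v^\sharp\|_{L^\infty_t L^2}\lesssim\|P(u\cdot\nabla u)\|_{L^2_t\dot B^{-1}_{2,2}}\lesssim\|u\otimes u\|_{L^2_tL^2}\lesssim\|u\|_{L^4_t\dot H^{1/2}}^2\lesssim\mathcal{E}_0\, ,
\]
using $\dot H^{1/2}\hookrightarrow L^4$ in two dimensions. The point is that placing the source in $L^2_t\dot H^{-1}$ rather than $L^1_t L^2$ avoids the critical product estimate entirely.
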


\begin{proof}
	The estimate in $L^2\dot B^{1}_{2,1}$ is obtained by reproducing the control of~$u_2$ from the proof of Corollary~\ref{2dproperty}.
	
	There only remains to control~$u_v^\sharp$ in~$L^\infty L^2$. To this end, we deduce from estimate~\eqref{parabolicBL} and by the Sobolev embedding~$\dot H^\frac 12\subset L^4$ that
	\begin{equation*}
		\begin{aligned}
			\|u_v^\sharp\|_{ L^\infty (\mathbb{R}^+;L^2 )}
			& \lesssim
			\|P(u\cdot\nabla u) \|_{ L^2(\mathbb{R}^+;\dot B^{-1}_{2,2})}
			\lesssim
			\|u\otimes u\|_{ L^2(\mathbb{R}^+;L^2)}
			\\
			& \lesssim
			\|u\|_{ L^4(\mathbb{R}^+;\dot H^\frac 12)}^2
			\lesssim
			\|u\|_{ L^\infty(\mathbb{R}^+;L^2)}
			\|u\|_{ L^2(\mathbb{R}^+;\dot H^1)}\lesssim \mathcal{E}_0\, ,
		\end{aligned}
	\end{equation*}
	which concludes the proof.
\end{proof}

Recall that~$\dot H^1$ barely fails to embed itself continuously into~$L^\infty$, which is a major snag when handling the two-dimensional setting of the Navier--Stokes--Maxwell equations. When dealing with the incompressible Navier--Stokes equations alone, this obstacle is circumvented by exploiting suitable parabolic regularity estimates as shown in Corollary~\ref{2dproperty}. In fact, the ideas of Corollary~\ref{2dproperty} have already been duly exploited in Lemmas~\ref{estimateuvflat2d} and~\ref{estimateuvsharp2d} in the context of the Navier--Stokes--Maxwell system.

However, in order to control the remaining electromagnetic contribution of the flow~$u_e$ in~$L^\infty$, we need now a refined interpolation estimate, which shows that the~$L^\infty$-norm can be controlled by the~$\dot H^1$-norm with some logarithmic help of a higher regularity space. This tame dependence of the~$L^\infty$-norm on higher regularity was crucial in the proof of the main result from~\cite{Masmoudi10jmpa}, whose strategy is closely followed here. We are therefore going to exploit this crucial principle, too. The relevant estimate from~\cite{Masmoudi10jmpa} is recalled in the following lemma. Carefully note that the coming result holds in any dimension and handles high frequencies only. The low frequencies are controlled later on, for convenience.

\begin{lem}\label{high log}
	In any dimension~$d$ and for any~$s>\frac d2$ and~$0\leq t_0<t$, it holds that
	\begin{equation*}
		\|(\operatorname{Id}-S_0)h\|_{L^2([t_0,t];L^\infty)}
		\lesssim \|h\|_{L^2([t_0,t];\dot H^\frac d2)}
		\log^\frac 12 \left(e+\frac{\|h\|_{L^2([t_0,t]];\dot B^s_{2,1})}}{\|h\|_{L^2([t_0,t];\dot H^\frac d2)}}\right) \, .
	\end{equation*}
\end{lem}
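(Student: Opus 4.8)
The plan is to exploit a Littlewood--Paley decomposition and split the frequency range at a threshold $N\in\mathbb{Z}$ that will be optimized at the end. First I would write $(\operatorname{Id}-S_0)h=\sum_{k\geq 0}\Delta_k h$ and estimate the $L^\infty$-norm in space by summing the low-frequency block $\sum_{0\leq k\leq N}\|\Delta_k h\|_{L^\infty}$ against the high-frequency tail $\sum_{k> N}\|\Delta_k h\|_{L^\infty}$. For each dyadic piece, Bernstein's inequality gives $\|\Delta_k h\|_{L^\infty}\lesssim 2^{k\frac d2}\|\Delta_k h\|_{L^2}$, so the first sum is controlled, using Cauchy--Schwarz in $k$, by
\begin{equation*}
	\sum_{0\leq k\leq N}2^{k\frac d2}\|\Delta_k h\|_{L^2}
	\lesssim (N+1)^{\frac 12}\Big(\sum_{k\in\mathbb{Z}}2^{kd}\|\Delta_k h\|_{L^2}^2\Big)^{\frac 12}
	\lesssim (N+1)^{\frac 12}\|h\|_{\dot H^{\frac d2}}\,,
\end{equation*}
while for the tail, writing $2^{k\frac d2}=2^{k(\frac d2-s)}2^{ks}$ with $\frac d2-s<0$ and summing the geometric series in $k>N$ yields $\sum_{k>N}\|\Delta_k h\|_{L^\infty}\lesssim 2^{N(\frac d2-s)}\|h\|_{\dot B^s_{2,1}}$. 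Taking the $L^2$-norm in time of the resulting pointwise-in-time bound (the number $N$ being a fixed integer, it commutes with the time integration) produces
\begin{equation*}
	\|(\operatorname{Id}-S_0)h\|_{L^2([t_0,t];L^\infty)}
	\lesssim (N+1)^{\frac 12}\|h\|_{L^2([t_0,t];\dot H^{\frac d2})}
	+2^{N(\frac d2-s)}\|h\|_{L^2([t_0,t];\dot B^s_{2,1})}\,.
\end{equation*}

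Next I would optimize in $N$. Writing $A:=\|h\|_{L^2([t_0,t];\dot H^{\frac d2})}$ and $D:=\|h\|_{L^2([t_0,t];\dot B^s_{2,1})}$, and noting $A\lesssim D$ since $\dot B^s_{2,1}\subset\dot H^{\frac d2}$ for $s>\frac d2$, the natural choice is roughly $N\sim \frac{1}{s-\frac d2}\log_2(e+D/A)$, which balances the two terms up to the constant $\frac 1{s-d/2}$ absorbed into the implicit constant; with this choice the second term becomes $\lesssim A$ and the first term becomes $\lesssim A\,\log^{\frac12}(e+D/A)$, giving exactly the claimed estimate. A small amount of care is needed because $N$ must be a non-negative integer: one takes $N$ to be the least non-negative integer above that value, which only affects constants, and the case where $D/A$ is bounded (so that $N=0$ works) is trivial since then the whole expression is $\lesssim A$.

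The main technical point — rather than an obstacle — is the bookkeeping ensuring the estimate is uniform in $t_0,t$ and that the logarithm is well-defined: one must keep the ratio $D/A$ safely bounded below (hence the $e+\cdot$ inside the log) and handle the degenerate case $A=0$ separately, in which case $h\equiv 0$ on $[t_0,t]$ and there is nothing to prove. Since $s$ and $d$ are fixed, the constant $\frac{1}{s-d/2}$ poses no difficulty. Everything else is a routine application of Bernstein's inequality together with summation of geometric and arithmetic series; this is precisely the classical logarithmic interpolation estimate (of Brezis--Gallouet--Wainger type in the form used in~\cite{Masmoudi10jmpa}), now phrased with the higher-regularity endpoint being the Besov space $\dot B^s_{2,1}$.
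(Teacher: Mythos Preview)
Your argument is correct and is essentially the paper's own proof: both split $(\operatorname{Id}-S_0)h$ dyadically at a threshold $N$, use Bernstein (equivalently, the embedding $\dot B^{d/2}_{2,1}\subset L^\infty$) together with Cauchy--Schwarz in $k$ on the low block and a geometric sum on the high block, integrate in time, and then optimize $N\sim\frac{1}{s-d/2}\log_2(e+D/A)$.

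One small slip: the claimed embedding $\dot B^s_{2,1}\subset\dot H^{d/2}$ (hence $A\lesssim D$) is false for homogeneous spaces, since low frequencies carry more weight in $\dot H^{d/2}$ than in $\dot B^s_{2,1}$ when $s>d/2$. This does not damage your proof, because your choice $N\sim c\log_2(e+D/A)$ is always a bounded positive number even when $D\leq A$, and you already note that the case of bounded $D/A$ is handled by taking $N=0$; just drop the incorrect justification.
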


\begin{proof}
	In view of the continuous embedding~$\dot B^\frac d2_{2,1}\subset L^\infty$, we only have to bound the norm of~$(\operatorname{Id}-S_0)h$ in~$L^2\dot B^\frac d2_{2,1}$. Thus, we first obtain, for any $N\geq 1$, that
	\begin{equation*}
		\begin{aligned}
			\|(\operatorname{Id}-S_0)h\|_{\dot B^\frac d2_{2,1}}
			&=
			\sum_{k=-1}^{[N]-1}2^{k\frac d2}\|\Delta_k (\operatorname{Id}-S_0)h\|_{L^2}
			+ \sum_{k=[N]}^{\infty}2^{k\frac d2}\|\Delta_k (\operatorname{Id}-S_0)h\|_{L^2}
			\\
			& \lesssim N^\frac 12\|h\|_{\dot H^\frac d2}
			+2^{N(\frac d2-s)}\|h\|_{\dot B^s_{2,1}} \, .
		\end{aligned}
	\end{equation*}
	Hence, integrating in time,
	\begin{equation*}
		\|(\operatorname{Id}-S_0)h\|_{L^2([t_0,t];\dot B^\frac d2_{2,1})}
		\lesssim N^\frac 12\|h\|_{L^2([t_0,t];\dot H^\frac d2)}
		+2^{N(\frac d2-s)}\|h\|_{L^2([t_0,t];\dot B^s_{2,1})} \, .
	\end{equation*}
	Then, following~\cite{Masmoudi10jmpa}, in order to optimize the choice of~$N$, we set
	\begin{equation*}
		N=\frac{1}
		{(s-\frac d2)\log 2}\log\left(2^{s-\frac d2}+\frac{\|h\|_{L^2([t_0,t];\dot B^s_{2,1})}}{\|h\|_{L^2([t_0,t];\dot H^\frac d2)}}\right)\, ,
	\end{equation*}
	which yields
	\begin{equation*}
		\|(\operatorname{Id}-S_0)h\|_{L^2([t_0,t];\dot B^\frac d2_{2,1})}
		\lesssim \|h\|_{L^2([t_0,t];\dot H^\frac d2)}\log^\frac 12\left(e+\frac{\|h\|_{L^2([t_0,t];\dot B^s_{2,1})}}{\|h\|_{L^2([t_0,t];\dot H^\frac d2)}}\right) \, ,
	\end{equation*}
	thus completing the justification of the lemma.
\end{proof}

The low frequencies of the flow will be controlled through an application of the following similar lemma.

\begin{lem}\label{low log}
	In any dimension~$d$ and for any~$0\leq t_0<t$, it holds that
	\begin{equation*}
		\|S_0h\|_{L^2([t_0,t];L^\infty)}
		\lesssim \|h\|_{L^2([t_0,t];\dot H^\frac d2)}\log^\frac 12\left(e+\frac{\|h\|_{L^2([t_0,t];L^2)}}{\|h\|_{L^2([t_0,t];\dot H^\frac d2)}}\right) \, .
	\end{equation*}
\end{lem}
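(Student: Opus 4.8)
The goal is to bound $\|S_0 h\|_{L^2([t_0,t];L^\infty)}$ by $\|h\|_{L^2([t_0,t];\dot H^{d/2})}$ up to a logarithmic factor involving $\|h\|_{L^2([t_0,t];L^2)}$. Since $S_0 h$ is spectrally localized in the low frequencies $\{|\xi|\lesssim 1\}$, the natural strategy is to mimic the proof of Lemma~\ref{high log}, but splitting the low-frequency dyadic blocks at a well-chosen negative integer rather than a positive one, and using the $L^2$-norm instead of the $\dot B^s_{2,1}$-norm to control the \emph{lowest} frequencies. First I would write, using the embedding $\dot B^0_{\infty,1}\subset L^\infty$ (equivalently summing $\|\Delta_k S_0h\|_{L^\infty}$ and using Bernstein's inequality $\|\Delta_k f\|_{L^\infty}\lesssim 2^{k d/2}\|\Delta_k f\|_{L^2}$), that for any integer $N\geq 1$,
\begin{equation*}
	\|S_0h\|_{L^\infty}\lesssim \sum_{k=-N}^{-1} 2^{k\frac d2}\|\Delta_k h\|_{L^2}
	+\sum_{k<-N} 2^{k\frac d2}\|\Delta_k h\|_{L^2}\, .
\end{equation*}

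For the first sum, Cauchy--Schwarz over the $N$ indices gives a bound $\lesssim N^{1/2}\|h\|_{\dot H^{d/2}}$, exactly as in Lemma~\ref{high log}. For the second (genuinely low-frequency) sum, I would instead estimate $2^{kd/2}\|\Delta_k h\|_{L^2}\leq 2^{kd/2}\|\Delta_k h\|_{L^2}$ and sum the geometric series $\sum_{k<-N}2^{kd/2}$, which converges (since $d>0$) and equals a constant times $2^{-Nd/2}$; bounding $\|\Delta_k h\|_{L^2}\lesssim\|h\|_{L^2}$ then yields $\lesssim 2^{-Nd/2}\|h\|_{L^2}$. Integrating in time over $[t_0,t]$ and pulling out $N^{1/2}$ and $2^{-Nd/2}$ as constants gives
\begin{equation*}
	\|S_0h\|_{L^2([t_0,t];L^\infty)}\lesssim N^\frac12\|h\|_{L^2([t_0,t];\dot H^\frac d2)}
	+2^{-N\frac d2}\|h\|_{L^2([t_0,t];L^2)}\, .
\end{equation*}

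It then remains to optimize in $N$. Following the same device as in Lemma~\ref{high log}, I would set
\begin{equation*}
	N=\frac{2}{d\log 2}\log\left(2^{\frac d2}+\frac{\|h\|_{L^2([t_0,t];L^2)}}{\|h\|_{L^2([t_0,t];\dot H^\frac d2)}}\right)\, ,
\end{equation*}
so that $2^{-Nd/2}\|h\|_{L^2_tL^2}\lesssim\|h\|_{L^2_t\dot H^{d/2}}$ and $N^{1/2}\lesssim\log^{1/2}(e+\|h\|_{L^2_tL^2}/\|h\|_{L^2_t\dot H^{d/2}})$, which delivers the claimed inequality. The only genuine subtlety — the analogue of the "main obstacle" — is that $N$ as defined need not be a positive integer; this is handled exactly as in the previous lemma by replacing $N$ with $\max\{1,\lceil N\rceil\}$ (or simply absorbing the rounding into the implicit constants), noting that the estimate is monotone in $N$ in the right way and that the degenerate case $\|h\|_{L^2_t\dot H^{d/2}}=0$ forces $h\equiv 0$ on low frequencies. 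Everything else is routine: Bernstein's inequality, summation of a convergent geometric series, Cauchy--Schwarz, and Minkowski's integral inequality to interchange the time integral with the dyadic sums.
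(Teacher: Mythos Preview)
Your proposal is correct and follows essentially the same argument as the paper's proof: both use the embedding $\dot B^{d/2}_{2,1}\subset L^\infty$ (your route via $\dot B^0_{\infty,1}$ plus Bernstein amounts to the same thing), split the low-frequency blocks at a cutoff $-N$, estimate the near part by Cauchy--Schwarz as $N^{1/2}\|h\|_{\dot H^{d/2}}$ and the far part by the geometric tail $2^{-Nd/2}\|h\|_{L^2}$, integrate in time, and optimize $N$ with exactly the same choice. The only cosmetic difference is that the paper's sum runs up to $k=0$ (since $\Delta_0 S_0 h$ need not vanish) whereas yours stops at $k=-1$; this single extra term is harmlessly absorbed into the first sum.
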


\begin{proof}
	This proof resembles the previous one. In view of the continuous embedding~$\dot B^\frac d2_{2,1}\subset L^\infty$, we only have to bound the norm of~$S_0h$ in~$L^2\dot B^\frac d2_{2,1}$. Thus, we first obtain, for any $N\geq 1$, that
	\begin{equation*}
		\begin{aligned}
			\|S_0h\|_{\dot B^\frac d2_{2,1}}
			&=
			\sum_{k=-[N]}^{0}2^{k\frac d2}\|\Delta_k S_0h\|_{L^2}
			+ \sum_{k=-\infty}^{-([N]+1)}2^{k\frac d2}\|\Delta_k S_0 h\|_{L^2}
			\\
			& \lesssim N^\frac 12\|h\|_{\dot H^\frac d2}
			+2^{-\frac d2 N}\|h\|_{L^2} \, ,
		\end{aligned}
	\end{equation*}
	whence, integrating in time,
	\begin{equation*}
		\|S_0h\|_{L^2([t_0,t];\dot B^\frac d2_{2,1})}
		\lesssim N^\frac 12\|h\|_{L^2([t_0,t];\dot H^\frac d2)}
		+2^{-\frac d2 N}\|h\|_{L^2([t_0,t];L^2)} \, .
	\end{equation*}
	As before, in order to optimize the choice of~$N$, we set
	\begin{equation*}
		N=\frac{1}
		{\frac d2\log 2}\log\left(2^{\frac d2}+\frac{\|h\|_{L^2([t_0,t];L^2)}}{\|h\|_{L^2([t_0,t];\dot H^\frac d2)}}\right)\, ,
	\end{equation*}
	which yields
	\begin{equation*}
		\|S_0h\|_{L^2([t_0,t];\dot B^\frac d2_{2,1})}
		\lesssim \|h\|_{L^2([t_0,t];\dot H^\frac d2)}\log^\frac 12\left(e+\frac{\|h\|_{L^2([t_0,t];L^2)}}{\|h\|_{L^2([t_0,t];\dot H^\frac d2)}}\right) \, .
	\end{equation*}
	The proof of the lemma is thus completed.
\end{proof}

At last, exploiting the preceding interpolation estimates, we control~$u_e$ as follows.

\begin{lem}\label{estimateue2d}
	There holds that
	\begin{equation}\label{basic ue}
		\|u_e\|_{L^\infty(\mathbb{R}^+; L^2)\cap L^2(\mathbb{R}^+; \dot H^1)}
		\lesssim \mathcal{E}_0^\frac 12 + \mathcal{E}_0 \, ,
	\end{equation}
	and, for any~$s\in (0,1)$ and~$0\leq t_0<t$,
	\begin{equation*}
		\begin{aligned}
			\|u_e\|_{L^2([t_0,t];L^\infty)}^2 \hspace{-15mm}&
			\\
			& \lesssim
			\left(\mathcal{E}_0+\mathcal{E}_0^2\right)
			\log\left(e+t-t_0\right)
			+
			\|u_e\|_{L^2([t_0,t];\dot H^1)}^2
			\log\left(e+\frac{\mathcal{E}_0\|B\|_{L^\infty([t_0,t];\dot H^s)}^2}
			{\|u_e\|_{L^2([t_0,t];\dot H^1)}^2}\right)
			\\
			& \lesssim
			\left(\mathcal{E}_0+\mathcal{E}_0^2\right)
			\log\left(e+t-t_0+\frac{\|B\|_{L^\infty([t_0,t];\dot H^s)}^2}
			{1+\mathcal{E}_0}\right)\, .
		\end{aligned}
	\end{equation*}
\end{lem}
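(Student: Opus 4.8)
The plan is to estimate $u_e$ via the Duhamel formula and the parabolic regularity tools from Section~\ref{parabolicregularity}, then feed the output into the logarithmic interpolation Lemmas~\ref{high log} and~\ref{low log}. First I would establish the basic bound~\eqref{basic ue}: since $u_e$ solves a Stokes system with source $P(j\times B)$ and zero initial data, a direct energy estimate gives $\|u_e\|_{L^\infty L^2\cap L^2\dot H^1}\lesssim\|j\times B\|_{L^2_tH^{-1}}$ (or $L^1_tL^2$), and controlling $j\times B$ by Hölder together with $\|j\|_{L^2_tL^2}\lesssim\mathcal E_0^{1/2}$ from the energy inequality and $\|B\|_{L^\infty_tL^2}\lesssim\mathcal E_0^{1/2}$ yields the claimed bound (the mixed powers $\mathcal E_0^{1/2}+\mathcal E_0$ coming from the two ways of distributing the norms, as in Lemmas~\ref{estimateuvsharp} and~\ref{estimateue}).

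**Main step: the $L^2L^\infty$ bound.** Writing $u_e=S_0u_e+(\operatorname{Id}-S_0)u_e$, I would apply Lemma~\ref{low log} to the low-frequency part and Lemma~\ref{high log} (with $d=2$, so $\dot H^{d/2}=\dot H^1$, and any fixed $s'\in(1,\infty)$, say $s'=1+s$) to the high-frequency part. For the low frequencies, $\|S_0u_e\|_{L^2([t_0,t];L^\infty)}\lesssim\|u_e\|_{L^2_t\dot H^1}\log^{1/2}(e+\|u_e\|_{L^2_tL^2}/\|u_e\|_{L^2_t\dot H^1})$; here $\|u_e\|_{L^2([t_0,t];L^2)}\lesssim(t-t_0)^{1/2}\|u_e\|_{L^\infty_tL^2}\lesssim(t-t_0)^{1/2}(\mathcal E_0^{1/2}+\mathcal E_0)$ by~\eqref{basic ue}, which produces the $(\mathcal E_0+\mathcal E_0^2)\log(e+t-t_0)$ term after using $a\log^{1/2}(b/a)\lesssim a\log^{1/2}(e+b)$ type manipulations and absorbing. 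For the high frequencies I need a bound on $\|u_e\|_{L^2([t_0,t];\dot B^{s'}_{2,1})}$ for some $s'>1$: I would use Lemma~\ref{estimateheatadhoc} or Proposition~\ref{estimateheat} applied to the Duhamel representation of $u_e$ with source $P(j\times B)$, estimating $\|j\times B\|$ in the appropriate Besov space via the two-dimensional product law~\eqref{para3} and Ohm's law $j=\sigma(cE+u\times B)$. Crucially, by~\eqref{para3} one gets $\|j\times B\|$ controlled by $\|j\|\,\|B\|_{\dot H^s}$ and $\|j\|$ in turn by $\|cE\|+\|u\|\,\|B\|_{\dot H^s}$, so that the high-regularity norm of $u_e$ is bounded polynomially in $\mathcal E_0$, $\|cE\|_{L^2_t\dot H^s}$ (itself controlled by the energy and by $F$-type quantities) and $\|B\|_{L^\infty_t\dot H^s}$. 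Tracking the dependence, $\|u_e\|_{L^2_t\dot B^{s'}_{2,1}}$ is bounded by a constant times $\mathcal E_0\|B\|_{L^\infty_t\dot H^s}$ up to lower-order terms, which is exactly what makes the logarithm in the statement take the form $\log(e+\mathcal E_0\|B\|_{L^\infty_t\dot H^s}^2/\|u_e\|_{L^2_t\dot H^1}^2)$.

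**Combining and the final simplification.** Adding the low- and high-frequency contributions gives
$$
\|u_e\|_{L^2([t_0,t];L^\infty)}^2\lesssim(\mathcal E_0+\mathcal E_0^2)\log(e+t-t_0)+\|u_e\|_{L^2([t_0,t];\dot H^1)}^2\log\!\left(e+\frac{\mathcal E_0\|B\|_{L^\infty([t_0,t];\dot H^s)}^2}{\|u_e\|_{L^2([t_0,t];\dot H^1)}^2}\right),
$$
which is the first displayed inequality. For the second, I would use~\eqref{basic ue} to bound $\|u_e\|_{L^2_t\dot H^1}^2\lesssim\mathcal E_0+\mathcal E_0^2$, then apply the elementary inequality $a\log(e+b/a)\lesssim(\mathcal E_0+\mathcal E_0^2)\log(e+b/(\mathcal E_0+\mathcal E_0^2))$ valid for $0<a\lesssim\mathcal E_0+\mathcal E_0^2$, and finally merge the two logarithms using $\log x+\log y\lesssim\log(xy)$ and $(\mathcal E_0+\mathcal E_0^2)(1+\mathcal E_0)^{-1}\sim 1+\mathcal E_0$ to land on $(\mathcal E_0+\mathcal E_0^2)\log(e+t-t_0+\|B\|_{L^\infty_t\dot H^s}^2/(1+\mathcal E_0))$.

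**Main obstacle.** The delicate point is not the interpolation lemmas themselves but obtaining the high-regularity bound $\|u_e\|_{L^2_t\dot B^{s'}_{2,1}}$ with the \emph{right} polynomial dependence on $\mathcal E_0$ and $\|B\|_{L^\infty_t\dot H^s}$ — in particular keeping the leading behavior linear in $\|B\|_{L^\infty_t\dot H^s}$ so that the logarithm closes. This forces a careful use of Ohm's law (rather than merely $j\in L^2L^2$) together with the new parabolic estimate of Proposition~\ref{estimateheat}; the classical Chemin--Lerner estimate~\eqref{parabolicBL} alone would not give a bound amenable to the subsequent Gr\"onwall argument in $c$. A secondary bookkeeping nuisance is propagating everything on a time interval $[t_0,t]$ rather than $[0,t]$, since the Duhamel representation and the $L^2_tL^2$ norm of $u_e$ must be re-expressed relative to the sub-interval; this is routine but must be handled consistently.
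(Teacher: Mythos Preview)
Your overall architecture---split $u_e=S_0u_e+(\operatorname{Id}-S_0)u_e$ and feed each piece into Lemmas~\ref{low log} and~\ref{high log}---matches the paper, and the final simplification via monotonicity of $z\mapsto z\log(e+a/z)$ is also what the paper does. However, two of your building blocks are off.

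For the basic bound~\eqref{basic ue}, your proposed direct energy estimate on $u_e$ does not close in two dimensions: with only $j\in L^2_tL^2$ and $B\in L^\infty_tL^2$ from the energy inequality you get $j\times B\in L^2_tL^1$, and $L^1$ fails to embed into $\dot H^{-1}$ in dimension two (dually, $\dot H^1\not\subset L^\infty$), so neither the $L^2_t\dot H^{-1}$ nor the $L^1_tL^2$ control you invoke is available. The paper avoids this entirely by writing $u_e=u-u_v^\flat-u_v^\sharp$ and applying the triangle inequality, using the energy inequality for $u$ and Lemmas~\ref{estimateuvflat2d}, \ref{estimateuvsharp2d} for the other pieces.

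For the high-regularity bound you have the main obstacle backwards. You assert that one is \emph{forced} to expand $j$ via Ohm's law rather than use merely $j\in L^2L^2$; in fact the paper does exactly the latter. Proposition~\ref{estimateheat} gives $\|u_e\|_{L^2([t_0,t];\dot B^{1+s}_{2,1})}\lesssim\|j\times B\|_{L^2([t_0,t];\dot B^{s-1}_{2,1})}$, and the two-dimensional product law~\eqref{para3} with indices $(0,s)$ yields $\|j\times B\|_{\dot B^{s-1}_{2,1}}\lesssim\|j\|_{L^2}\|B\|_{\dot H^s}$, so that $\|u_e\|_{L^2\dot B^{1+s}_{2,1}}\lesssim\mathcal E_0^{1/2}\|B\|_{L^\infty\dot H^s}$ directly. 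Your Ohm's-law route would instead introduce $\|cE\|_{L^2_t\dot H^s}$ on the right-hand side, which is \emph{not} controlled by $\mathcal E_0$ alone and would prevent you from proving the lemma as stated.
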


\begin{proof}
	First, it is clear from Lemmas~\ref{estimateuvflat2d} and~\ref{estimateuvsharp2d} that
	\begin{equation*}
		\|u_e\|_{L^\infty_t L^2\cap L^2_t  \dot H^1}
		\leq \|u\|_{L^\infty_t  L^2\cap L^2_t  \dot H^1}
		+ \|u_v^\flat\|_{L^\infty_t  L^2\cap L^2_t  \dot H^1}
		+ \|u_v^\sharp\|_{L^\infty_t  L^2\cap L^2_t  \dot H^1}
		\lesssim \mathcal{E}_0^\frac 12 + \mathcal{E}_0 \, .
	\end{equation*}
	Therefore, there only remains to control~$u_e$ in~$L^2 L^\infty$.
	To that end, we deduce from Proposition~\ref{estimateheat} that for any~$s\in (0,1)$ and~$0\leq t_0<t$, 
	\begin{equation*}
		\|u_e\|_{L^2([t_0,t];\dot B^{1+s}_{2,1})}
		\lesssim
		\|P(j\times B)\|_{L^2([t_0,t];\dot B^{-1+s}_{2,1})}
		\lesssim
		\|j\times B\|_{L^2([t_0,t];\dot B^{-1+s}_{2,1})}\, ,
	\end{equation*}
	whence, further employing the paradifferential product rule~\eqref{para3},
	\begin{equation*}
		\|u_e\|_{L^2([t_0,t];\dot B^{1+s}_{2,1})}
		\lesssim
		\big\|\|j\|_{L^2}\|B\|_{\dot H^s}\big\|_{L^2([t_0,t])}
		\lesssim
		\|j\|_{L^2([t_0,t];L^2)}\|B\|_{L^\infty([t_0,t];\dot H^s)}\, .
	\end{equation*}
	Then, combining the preceding estimate with Lemma~\ref{high log}, we find
	\begin{equation*}
		\begin{aligned}
			\|(\operatorname{Id}-S_0)u_e\|_{L^2([t_0,t];L^\infty)}^2
			& \lesssim
			\|u_e\|_{L^2([t_0,t];\dot H^1)}^2
			\log\left(e+\frac{\|u_e\|_{L^2([t_0,t];\dot B^{1+s}_{2,1})}^2}{\|u_e\|_{L^2([t_0,t];\dot H^1)}^2}\right)
			\\
			& \lesssim
			\|u_e\|_{L^2([t_0,t];\dot H^1)}^2
			\log\left(e+\frac{\mathcal{E}_0\|B\|_{L^\infty([t_0,t];\dot H^s)}^2}
			{\|u_e\|_{L^2([t_0,t];\dot H^1)}^2}\right)\, .
		\end{aligned}
	\end{equation*}
	Regarding the low frequencies of~$u_e$, employing Lemma~\ref{low log}, we find
	\begin{equation*}
		\begin{aligned}
			\|S_0u_e\|_{L^2([t_0,t];L^\infty)}^2
			& \lesssim
			\|u_e\|_{L^2([t_0,t];\dot H^1)}^2
			\log\left(e+\frac{\|u_e\|_{L^2([t_0,t];L^2)}^2}{\|u_e\|_{L^2([t_0,t];\dot H^1)}^2}\right)
			\\
			& \lesssim
			\|u_e\|_{L^2([t_0,t];\dot H^1)}^2
			\log\left(e+\frac{(t-t_0)\left(\mathcal{E}_0+\mathcal{E}_0^2\right)}{\|u_e\|_{L^2([t_0,t];\dot H^1)}^2}\right)
			\\
			& \lesssim
			\left(\mathcal{E}_0+\mathcal{E}_0^2\right)
			\log\left(e+t-t_0\right) \, ,
		\end{aligned}
	\end{equation*}
	where we have used~\eqref{basic ue} and the fact that the function~$z\mapsto z\log(e+\frac az)$ on~$z\in\mathbb{R}^+$, for any~$a\geq 0$, is increasing.

	All in all, combining the estimates on high and low frequencies of~$u_e$ gives that
	\begin{equation*}
		\begin{aligned}
			\|u_e\|_{L^2([t_0,t];L^\infty)}^2 \hspace{-15mm}&
			\\
			& \lesssim
			\left(\mathcal{E}_0+\mathcal{E}_0^2\right)
			\log\left(e+t-t_0\right)
			+
			\|u_e\|_{L^2([t_0,t];\dot H^1)}^2
			\log\left(e+\frac{\mathcal{E}_0\|B\|_{L^\infty([t_0,t];\dot H^s)}^2}
			{\|u_e\|_{L^2([t_0,t];\dot H^1)}^2}\right)
			\\
			& \lesssim
			\left(\mathcal{E}_0+\mathcal{E}_0^2\right)
			\log\left(e+t-t_0\right)
			+
			\left(\mathcal{E}_0+\mathcal{E}_0^2\right)
			\log\left(e+\frac{\|B\|_{L^\infty([t_0,t];\dot H^s)}^2}
			{1+\mathcal{E}_0}\right)\, ,
		\end{aligned}
	\end{equation*}
	which concludes the proof of the lemma.
\end{proof}

We may now move on to conclude the proof of Theorem~\ref{mainthm2d}. To this end, observe that Proposition~\ref{estimatewave} (for any~$s\in (0,1)$) combined with Lemmas~\ref{estimateuvflat2d}, \ref{estimateuvsharp2d} and~\ref{estimateue2d} yields that, for any~$0\leq t_0<t$,
\begin{equation*}
	\begin{aligned}
		\|E(t)\|_{\dot H^s}^2+ & \|B(t)\|_{\dot H^s}^2+\sigma\int_{t_0}^t \|cE(\tau)\|_{\dot H^s}^2\, d\tau
		\\
		&
		\leq \left(\|E(t_0)\|_{\dot H^s}^2+\|B(t_0)\|_{\dot H^s}^2\right)
		\exp \left(C_1 \int_{t_0}^t \|u(\tau)\|_{L^\infty\cap \dot H^1}^2\, d\tau\right)
		\\
		& \leq
		\left(\|E(t_0)\|_{\dot H^s}^2+\|B(t_0)\|_{\dot H^s}^2\right)
		\left(e+t-t_0\right)^{C_2
		\left(\mathcal{E}_0+\mathcal{E}_0^2\right)}
		\\
		& \quad\times
		\left(e+\frac{\mathcal{E}_0\|B\|_{L^\infty([t_0,t];\dot H^s)}^2}
		{\|u_e\|_{L^2([t_0,t];\dot H^1)}^2}\right)^{C_2\|u_e\|_{L^2([t_0,t];\dot H^1)}^2}
		\, ,
	\end{aligned}
\end{equation*}
for some constants~$C_1,C_2>0$ depending only on fixed parameters.

Using that the function~$z\mapsto (e+\frac az)^z$ on~$z\in\mathbb{R}^+$, for any~$a\geq 0$, is increasing, and defining, for all~$0\leq t_0<t$,
\begin{equation*}
	\begin{aligned}
		G(t_0,t)&:=
		\sup_{r\in [t_0,t]}
		\left(\|E(r)\|_{\dot H^s}^2+\|B(r)\|_{\dot H^s}^2\right)
		\\
		G(t_0,t_0)&:= \|E(t_0)\|_{\dot H^s}^2+\|B(t_0)\|_{\dot H^s}^2\, ,
	\end{aligned}
\end{equation*}
we deduce that
\begin{equation}\label{2d1}
	G(t_0,t)
	\leq
	G(t_0,t_0)
	\left(e+t-t_0\right)^{C_2
	\left(\mathcal{E}_0+\mathcal{E}_0^2\right)}
	\left(e+\frac{\mathcal{E}_0G(t_0,t)}
	{\|u_e\|_{L^2([t_0,t];\dot H^1)}^2}\right)^{C_2\|u_e\|_{L^2([t_0,t];\dot H^1)}^2}
	\, .
\end{equation}
Recall that all unkowns are assumed to be smooth, for all estimates are to be performed on the regularized system~\eqref{NSMn}. In particular, $G(t_0,t)$ is assumed here to be continuous.

The proof of the theorem will be complete upon showing that~\eqref{2d1} entails the global bound
\begin{equation}\label{2d2}
	\mathcal{E}_0G(0,t)
	\leq
	\left(e+\mathcal{E}_0G(0,0)+\frac{t}{1+\mathcal{E}_0+\mathcal{E}_0^2}\right)^{C_* 2^{C_*(\mathcal{E}_0+\mathcal{E}_0^2)}}
	\, ,
\end{equation}
for some possibly large constant~$C_*>0$ only depending on fixed parameters.

In order to establish the validity of~\eqref{2d2}, using that~$\|u_e\|_{L^2(\mathbb{R}^+;\dot H^1)}$ is finite by virtue of~\eqref{basic ue}, we first consider a partition
\begin{equation*}
	0=t_0<t_1<t_2<\ldots<t_n<t_{n+1}=\infty \, ,
\end{equation*}
for some~$n\in\mathbb{N}$, such that, for each~$i=1,\ldots,n$,
\begin{equation*}
	C_2\|u_e\|_{L^2([t_{i-1},t_i];\dot H^1)}^2 = \frac 12
	\quad\text{and}\quad
	C_2\|u_e\|_{L^2([t_n,\infty);\dot H^1)}^2\leq \frac 12 \, \cdotp
\end{equation*}
In particular, it holds that
\begin{equation*}
	\frac i2
	= C_2\|u_e\|_{L^2([0,t_i];\dot H^1)}^2
	\leq C_2\|u_e\|_{L^2([0,t];\dot H^1)}^2 \leq \frac {i+1}2 \, ,
\end{equation*}
for every~$t\in [t_{i},t_{i+1})$, with~$i=0,\ldots,n$.

It then follows from~\eqref{2d1} that, for each~$i=0,\ldots,n$ and all~$t\in[t_i,t_{i+1})$,
\begin{equation*}
	G(t_i,t)
	\leq
	G(t_i,t_i)
	\left(e+t-t_i\right)^{C_2
	\left(\mathcal{E}_0+\mathcal{E}_0^2\right)}
	\left(e+2\mathcal{E}_0G(t_i,t)\right)^\frac 12
	\, ,
\end{equation*}
which implies the weaker inequality
\begin{equation*}
	e+2\mathcal{E}_0G(t_i,t)
	\leq
	(e+2\mathcal{E}_0G(t_i,t_i))
	\left(e+t-t_i\right)^{C_2
	\left(\mathcal{E}_0+\mathcal{E}_0^2\right)}
	\left(e+2\mathcal{E}_0G(t_i,t)\right)^\frac 12
	\, ,
\end{equation*}
and therefore
\begin{equation}\label{G1}
	e+2\mathcal{E}_0G(t_i,t)
	\leq
	(e+2\mathcal{E}_0G(t_i,t_i))^2
	\left(e+t-t_i\right)^{2C_2
	\left(\mathcal{E}_0+\mathcal{E}_0^2\right)}
	\, .
\end{equation}

Next, observe that~$G(t_i,t_i)\leq G(t_{i-1},t_i)$, for every~$i=1,\ldots,n$. Thus, given any~$t\in [t_k,t_{k+1})$, for some~$k\in\left\{0,1,\ldots,n\right\}$, applying recursively the bound~\eqref{G1}, we obtain that
\begin{equation*}
	\begin{aligned}
		\frac{e+2\mathcal{E}_0G(t_k,t)}{\left(e+t-t_k\right)^{2C_2
		\left(\mathcal{E}_0+\mathcal{E}_0^2\right)}} \hspace{-10mm} &
		\\
		& \leq
		(e+2\mathcal{E}_0G(t_{k-1},t_k))^2
		\\
		& \leq
		(e+2\mathcal{E}_0G(t_{k-2},t_{k-1}))^4
		\left(e+t_{k}-t_{k-1}\right)^{4C_2
		\left(\mathcal{E}_0+\mathcal{E}_0^2\right)}
		\\
		& \leq \ldots
		\\
		& \leq
		(e+2\mathcal{E}_0G(t_{0},t_{1}))^{2^k}
		\prod_{j=2}^k\left(e+t_{k+2-j}-t_{k+1-j}\right)^{2^jC_2\left(\mathcal{E}_0+\mathcal{E}_0^2\right)}
		\\
		& \leq
		(e+2\mathcal{E}_0G(t_{0},t_{0}))^{2^{k+1}}
		\prod_{j=2}^{k+1}\left(e+t_{k+2-j}-t_{k+1-j}\right)^{2^jC_2\left(\mathcal{E}_0+\mathcal{E}_0^2\right)}
		\, .
	\end{aligned}
\end{equation*}
Now, employing that the arithmetic mean is always larger than the geometric mean, we see that
\begin{equation*}
	\left(e+t-t_k\right)
	\prod_{j=2}^{k+1}\left(e+t_{k+2-j}-t_{k+1-j}\right)
	\leq
	\left(e+\frac{t}{k+1}\right)^{k+1} \, .
\end{equation*}
Therefore, we deduce that
\begin{equation*}
	e+2\mathcal{E}_0G(t_k,t)
	\leq
	(e+2\mathcal{E}_0G(t_{0},t_{0}))^{2^{k+1}}
	\left(e+\frac{t}{k+1}\right)^{(k+1)2^{k+1}C_2\left(\mathcal{E}_0+\mathcal{E}_0^2\right)}
	\, ,
\end{equation*}
for every~$k\in\{0,1,\ldots,n\}$ and~$t\in[t_k,t_{k+1})$.

Further employing estimate~\eqref{basic ue} combined with the fact that~$n\lesssim \|u_e\|_{L^2(\mathbb{R}^+;\dot H^1)}^2$, and using that~$z\mapsto (e+\frac az)^z$ is increasing, for any~$a\geq 0$, we obtain
\begin{equation*}
	\mathcal{E}_0G(t_k,t)
	\leq
	\left(e+\mathcal{E}_0G(t_{0},t_{0})+\frac{t}{1+\mathcal{E}_0+\mathcal{E}_0^2}\right)^{C_* 2^{C_*(\mathcal{E}_0+\mathcal{E}_0^2)}}
	\, ,
\end{equation*}
for every~$k\in\{0,1,\ldots,n\}$ and~$t\in[t_k,t_{k+1})$, for some possibly large constant~$C_*>0$ only depending on fixed parameters. At last, since
\begin{equation*}
	G(0,t)=\max\left\{G(t_0,t_1),G(t_1,t_2),\ldots,G(t_{k-1},t_k),G(t_k,t)\right\},
\end{equation*}
it is readily seen that~\eqref{2d2} holds for every~$t\geq 0$, which concludes the proof of the theorem. \qed

\appendix


\section{Littlewood--Paley decompositions and Besov spaces}\label{LP decomposition}

We denote the Fourier transform
\begin{equation*}
	\hat f(\xi):=\mathcal{F}f\left(\xi\right):=\int_{\mathbb{R}^d} e^{- i \xi \cdot x} f(x) dx\,,
\end{equation*}
and its inverse
\begin{equation*}
	\tilde g(x):=\mathcal{F}^{-1} g\left(x\right):=\frac{1}{\left(2\pi\right)^d}\int_{\mathbb{R}^d} e^{i x \cdot \xi} g(\xi) d\xi\,.
\end{equation*}
We introduce now a standard Littlewood-Paley decomposition of the frequency space into dyadic blocks. To this end, let $\psi(\xi),\varphi(\xi)\in C_c^\infty\left(\mathbb{R}^d\right)$ be such that
\begin{equation*}
	\begin{gathered}
		\psi,\varphi\geq 0\text{ are radial},
		\quad \mathrm{supp}\, \psi\subset\left\{|\xi|\leq 1\right\},
		\quad\mathrm{supp}\, \varphi\subset\left\{\frac{1}{2}\leq |\xi|\leq 2\right\}\\
		\text{and}\quad1= \psi(\xi)+\sum_{k=0}^\infty \varphi\left(2^{-k}\xi\right),\quad\text{for all }\xi\in\mathbb{R}^d.
	\end{gathered}
\end{equation*}
Defining the scaled functions $\displaystyle \psi_{k}(\xi):=\psi\left(2^{-k}\xi\right)$ and $\displaystyle\varphi_{k}(\xi):=\varphi\left(2^{-k}\xi\right)$,      one has then  
\begin{equation*}
	\begin{gathered}
		\mathrm{supp}\,\psi_{k}\subset\left\{ |\xi|\leq 2^k\right\},
		\quad \mathrm{supp}\,\varphi_{k}\subset\left\{ 2^{k-1}\leq |\xi|\leq 2^{k+1}\right\}\\
		\text{and}\quad 1\equiv \psi+\sum_{k=0}^\infty \varphi_{k} \, .
	\end{gathered}
\end{equation*}
Notice that outside $0$ one also has
$$
1\equiv \sum_{k=-\infty}^\infty \varphi_{k} \, .
$$
Furthermore, we shall use the Fourier multiplier operators
\begin{equation*}
	S_k ,\Delta_k:\mathcal{S}'\left(\mathbb{R}^d\right)\rightarrow\mathcal{S}'\left(\mathbb{R}^d\right)
\end{equation*}
(here $\mathcal{S}'$ denotes the space of tempered distributions) defined by
\begin{equation*}
	S_k f:=\mathcal{F}^{-1}\psi_k\mathcal{F}f= \left(\mathcal{F}^{-1}\psi_k\right)*f
	\quad\text{and}\quad
	\Delta_k f:=\mathcal{F}^{-1}\varphi_k\mathcal{F}f= \left(\mathcal{F}^{-1}\varphi_k\right)*f \, ,
\end{equation*}
so that
\begin{equation*}
	S_0 f+\sum_{k=0}^\infty\Delta_{ k}f=f \, ,
\end{equation*}
where the series is convergent in $\mathcal{S}'$.
Similarly one has
 \begin{equation*}
 \sum_{k=-\infty}^\infty\Delta_{ k}f=f \, ,
\end{equation*}
in~$\mathcal{S}'$, provided
\begin{equation}\label{origin}
	\lim_{k\to -\infty}\|S_kf\|_{L^\infty}=0 \, .
\end{equation}
Observe that~\eqref{origin} holds as soon as~$\hat f$ is locally integrable around the origin or~$S_0f$ belongs to~$ L^p(\mathbb{R}^d)$, for some~$1\leq p<\infty$. In particular, note that the above property excludes non-zero polynomials.

\medskip

Now, we define the homogeneous Besov space $\dot B^{s}_{p,q}\left(\mathbb{R}^d\right)$, for any~$s \in \mathbb{R}$ and~$1\leq p,q\leq \infty$, as the subspace of tempered distributions satisfying~\eqref{origin} endowed with the norm
\begin{equation*}
	\left\|f\right\|_{\dot B^{s}_{p,q}\left(\mathbb{R}^d\right)}=
	\left(
	\sum_{k=-\infty}^\infty 2^{ksq}
	\left\|\Delta_{k}f\right\|_{L^p\left(\mathbb{R}^d\right)}^q\right)^\frac{1}{q}\, ,
\end{equation*}
if~$q<\infty$, and with the obvious modifications in case~$q=\infty$. It holds that~$\dot B^s_{p,q}$ is a Banach spaces if~$s<\frac dp$, or if~$s=\frac dp$ and~$q=1$ (see~\cite[Theorem 2.25]{bahouri}).

We also introduce the homogeneous Sobolev space~$\dot H^s\left(\mathbb{R}^d\right)$, for any~$s\in\mathbb{R}$, as the subspace of tempered distributions whose Fourier transform is locally integrable endowed with the norm
\begin{equation*}
	\left\|f\right\|_{\dot H^s}=\left(\int_{\mathbb{R}^d}|\xi|^{2s}|\hat f(\xi)|^2 d\xi\right)^\frac 12\,.
\end{equation*}
It holds that~$\dot H^s$ is a Hilbert space if~$s<\frac d2$ (see~\cite[Proposition 1.34]{bahouri})

Since any tempered distribution whose Fourier transform is locally integrable automatically satisfies~\eqref{origin}, it is clear that~$\dot H^s\subset \dot B^s_{2,2}$. Conversely, suppose that~$s<\frac d2$ and consider any~$f\in \dot B^s_{2,2}$. Then, each~$\Delta_k f$ belongs to~$L^2$ and~$\hat f$ is therefore locally integrable away from the origin. But~$\hat f$ is also integrable near the origin, for
\begin{equation*}
	\begin{aligned}
		\left\|\psi_0 \hat  f \right\|_{L^1}
		=
		\left\|\sum_{k\leq -1}\mathcal{F}(\Delta_k f)\right\|_{L^1}
		& \leq
		\sum_{k\leq -1}
		\left\|\mathds{1}_{\left\{2^{k-1}\leq |\xi|\leq 2^{k+1}\right\}}\mathcal{F}(\Delta_k f)\right\|_{L^1}
		\\
		& \lesssim
		\sum_{k\leq -1}2^{k\frac d2}
		\left\|\mathcal{F}(\Delta_k f)\right\|_{L^2}
		\lesssim \|f\|_{\dot B^s_{2,2}}\, ,
	\end{aligned}
\end{equation*}
which implies that~$\dot H^s= \dot B^s_{2,2}$ whenever~$s<\frac d2$.

\medskip

Now, we recall two important product rules of paradifferential calculus in homogeneous Besov spaces. Both rules can be deduced directly from Theorems~2.47 and~2.52 in~\cite[Section 2.6]{bahouri}.

First, for any~$-\frac d2<s,t<\frac d2$ with~$s+t>0$, we have that
\begin{equation*}
	\|fg\|_{\dot B^{s+t-\frac d2}_{2,1}} \lesssim \|f\|_{\dot H^s}\|g\|_{\dot H^t}\, ,
\end{equation*}
for all~$f\in\dot H^s$ and~$g\in\dot H^t$.

Second, for all~$-\frac d2<s<\frac d2$, it holds that
\begin{equation*}
	\|fg\|_{\dot H^s} \lesssim \|f\|_{L^\infty\cap \dot B^\frac d2_{2,\infty}}\|g\|_{\dot H^s}\, ,
\end{equation*}
for all~$f\in L^\infty\cap \dot B^\frac d2_{2,\infty}$ and~$g\in\dot H^s$. In particular, further employing the continuous injection~$\dot B^\frac d2_{2,1}\subset L^\infty\cap \dot B^\frac d2_{2,\infty}$, observe that
\begin{equation*}
	\|fg\|_{\dot H^s} \lesssim \|f\|_{\dot B^\frac d2_{2,1}}\|g\|_{\dot H^s}\, ,
\end{equation*}
for all~$f\in \dot B^\frac d2_{2,1}$ and~$g\in\dot H^s$.

These product rules are used several times throughout this work.

\medskip

Finally, recall that, for any~$T>0$, $s \in \mathbb{R}$ and~$1\leq p,q,r\leq \infty$, with~$s<\frac dp$ (or~$s=\frac dp$ and~$q=1$), the spaces~$ L^r\left( (0,T) ; \dot B^{s}_{p,q}\left(\mathbb{R}^d\right) \right)$ are defined as~$L^r$-spaces with values in the Banach spaces~$\dot B^{s}_{p,q}$. In addition to these vector-valued Lebesgue spaces, we further define the spaces~$ \widetilde L^r\left( (0,T) ; \dot B^{s}_{p,q}\left(\mathbb{R}^d\right) \right)$ as the subspaces of tempered distributions such that
\begin{equation*}
	\lim_{k\to -\infty}\|S_kf\|_{L^r\left((0,T);L^p\left(\mathbb{R}^d\right)\right)}=0 \, ,
\end{equation*}
endowed with the norm
\begin{equation*}
	\left\|f\right\|_{ \widetilde L^r \left( (0,T) ; B^{s}_{p,q}\left(\mathbb{R}^d\right) \right)}
	=
	\left( \sum_{k=-\infty}^\infty 2^{ksq}
	\left\|\Delta_{k}f\right\|_{L^r\left((0,T);L^p\left(\mathbb{R}^d\right)\right) }^q\right)^\frac{1}{q}\, ,
\end{equation*}
if~$q<\infty$, and with the obvious modifications in case~$q=\infty$. This kind of spaces was first introduced by Chemin and Lerner in~\cite{chemin2} and has been used in a large variety of problems since then.

One can easily check that, if~$q \geq r $, then
\begin{equation*}
	 L^r\left( (0,T) ; \dot B^{s}_{p,q}\left(\mathbb{R}^d\right) \right)
	\subset  \widetilde L^r\left( (0,T) ; \dot B^{s}_{p,q}\left(\mathbb{R}^d\right) \right)\, ,
\end{equation*}
and that, if~$q \leq r $, then
\begin{equation*}
	\widetilde L^r\left( (0,T) ; \dot B^{s}_{p,q}\left(\mathbb{R}^d\right)  \right)
	\subset L^r\left( (0,T); \dot B^{s}_{p,q}\left(\mathbb{R}^d\right) \right)\, .
\end{equation*}
We refer the reader to~\cite[Section 2.6.3]{bahouri} for more details on Chemin--Lerner spaces.

	\bibliographystyle{plain}
	\bibliography{NS_Maxwell}

\begin{thebibliography}{10}

\bibitem{arsenio2}
Diogo Ars{\'e}nio, Slim Ibrahim, and Nader Masmoudi.
\newblock A derivation of the magnetohydrodynamic system from
  {N}avier-{S}tokes-{M}axwell systems.
\newblock {\em Arch. Ration. Mech. Anal.}, 216(3):767--812, 2015.

\bibitem{arseniosaintraymond}
Diogo Ars{\'e}nio and Laure Saint-Raymond.
\newblock {\em From the {V}lasov--{M}axwell--{B}oltzmann system to
  incompressible viscous electro-magneto-hydrodynamics}.
\newblock EMS Monographs in Mathematics. European Mathematical Society
  Publishing House, 2017.
\newblock In print.

\bibitem{aubin}
Jean-Pierre Aubin.
\newblock Un th\'eor\`eme de compacit\'e.
\newblock {\em C. R. Acad. Sci. Paris}, 256:5042--5044, 1963.

\bibitem{bahouri}
Hajer Bahouri, Jean-Yves Chemin, and Rapha{\"e}l Danchin.
\newblock {\em Fourier analysis and nonlinear partial differential equations},
  volume 343 of {\em Grundlehren der Mathematischen Wissenschaften [Fundamental
  Principles of Mathematical Sciences]}.
\newblock Springer, Heidelberg, 2011.

\bibitem{bergh}
J\"oran Bergh and J\"orgen L\"ofstr\"om.
\newblock {\em Interpolation spaces. {A}n introduction}.
\newblock Springer-Verlag, Berlin-New York, 1976.
\newblock Grundlehren der Mathematischen Wissenschaften, No. 223.

\bibitem{Biskamp93}
Dieter Biskamp.
\newblock {\em Nonlinear magnetohydrodynamics}, volume~1 of {\em Cambridge
  Monographs on Plasma Physics}.
\newblock Cambridge University Press, Cambridge, 1993.

\bibitem{chemin2}
J.-Y. Chemin and N.~Lerner.
\newblock Flot de champs de vecteurs non lipschitziens et \'equations de
  {N}avier-{S}tokes.
\newblock {\em J. Differential Equations}, 121(2):314--328, 1995.

\bibitem{chemin4}
Jean-Yves Chemin and Isabelle Gallagher.
\newblock On the global wellposedness of the 3-{D} {N}avier--{S}tokes equations
  with large initial data.
\newblock {\em Ann. Sci. \'Ecole Norm. Sup. (4)}, 39(4):679--698, 2006.

\bibitem{Davidson01}
P.~A. Davidson.
\newblock {\em An introduction to magnetohydrodynamics}.
\newblock Cambridge Texts in Applied Mathematics. Cambridge University Press,
  Cambridge, 2001.

\bibitem{furiolilemarieterraneo}
Giulia Furioli, Pierre~G. Lemari\'{e}-Rieusset, and Elide Terraneo.
\newblock Unicit\'{e} dans {$L^3(\Bbb R^3)$} et d'autres espaces fonctionnels
  limites pour {N}avier-{S}tokes.
\newblock {\em Rev. Mat. Iberoamericana}, 16(3):605--667, 2000.

\bibitem{germain}
Pierre Germain, Slim Ibrahim, and Nader Masmoudi.
\newblock Well-posedness of the {N}avier-{S}tokes-{M}axwell equations.
\newblock {\em Proc. Roy. Soc. Edinburgh Sect. A}, 144(1):71--86, 2014.

\bibitem{grafakos}
Loukas Grafakos.
\newblock {\em Classical {F}ourier analysis}, volume 249 of {\em Graduate Texts
  in Mathematics}.
\newblock Springer, New York, second edition, 2008.

\bibitem{ibrahim}
Slim Ibrahim and Sahbi Keraani.
\newblock Global small solutions for the {N}avier-{S}tokes-{M}axwell system.
\newblock {\em SIAM J. Math. Anal.}, 43(5):2275--2295, 2011.

\bibitem{lemarie}
P.~G. Lemari{\'e}-Rieusset.
\newblock {\em Recent developments in the {N}avier-{S}tokes problem}, volume
  431 of {\em Chapman \& Hall/CRC Research Notes in Mathematics}.
\newblock Chapman \& Hall/CRC, Boca Raton, FL, 2002.

\bibitem{lemarie2}
Pierre~Gilles {Lemari\'e-Rieusset}.
\newblock {\em {The Navier--Stokes problem in the 21st century}}.
\newblock Boca Raton, FL: CRC Press, 2016.

\bibitem{leray2D}
Jean {Leray}.
\newblock {\'Etude de diverses \'equations int\'egrales non lin\'eaires et de
  quelques probl\`emes que pose l'hydrodynamique.}
\newblock {\em {J. Math. Pures Appl. (9)}}, 12:1--82, 1933.

\bibitem{leray}
Jean Leray.
\newblock Sur le mouvement d'un liquide visqueux emplissant l'espace.
\newblock {\em Acta Math.}, 63(1):193--248, 1934.

\bibitem{lions}
Jacques-Louis Lions.
\newblock {\em \'{E}quations diff\'erentielles op\'erationnelles et probl\`emes
  aux limites}.
\newblock Die Grundlehren der mathematischen Wissenschaften, Bd. 111.
  Springer-Verlag, Berlin, 1961.

\bibitem{masmoudi2}
P.-L. {Lions} and N.~{Masmoudi}.
\newblock {Uniqueness of mild solutions of the Navier-Stokes system in $L^N$.}
\newblock {\em {Commun. Partial Differ. Equations}}, 26(11-12):2211--2226,
  2001.

\bibitem{Masmoudi10jmpa}
Nader Masmoudi.
\newblock Global well posedness for the {M}axwell-{N}avier-{S}tokes system in
  2{D}.
\newblock {\em J. Math. Pures Appl. (9)}, 93(6):559--571, 2010.

\bibitem{monniaux}
Sylvie Monniaux.
\newblock On uniqueness for the {N}avier-{S}tokes system in 3{D}-bounded
  {L}ipschitz domains.
\newblock {\em J. Funct. Anal.}, 195(1):1--11, 2002.

\bibitem{simon}
Jacques Simon.
\newblock Compact sets in the space {$L^p(0,T;B)$}.
\newblock {\em Ann. Mat. Pura Appl. (4)}, 146:65--96, 1987.

\end{thebibliography}

\end{document}